\newcommand{\poly}{{\scriptscriptstyle{\mathrm{poly}}}}    % Index poly
\newcommand{\Tpoly}{T_\poly}  % multivector fields
\newcommand{\Tay}{{\scriptscriptstyle{\mathrm{Tay}}}}    % Index Tay for taylor
\newcommand{\nice}{{\scriptscriptstyle{\mathrm{nice}}}}      % Index nice  
\newcommand{\Cart}{{\scriptscriptstyle{\mathrm{Cart}}}} % index for Cartan
\title{The Strong Homotopy Structure of Poisson Reduction}
\date{April 2020}
\author{
  \textbf{Chiara Esposito}\thanks{\texttt{chesposito@unisa.it}}, \textbf{Andreas Kraft}\thanks{\texttt{akraft@unisa.it}},\\[0.3cm]
   Dipartimento di Matematica\\
   Università degli Studi di Salerno\\
   via Giovanni Paolo II, 123\\
   84084 Fisciano (SA)\\
   Italy \\[0.5cm]
  \textbf{Jonas Schnitzer}\thanks{\texttt{jonas.schnitzer@math.uni-freiburg.de}},\\[0.3cm]
  Department of Mathematics\\
   University of Freiburg\\
   Ernst-Zermelo-Straße, 1\\
	D-79104 Freiburg\\
   Germany
}
\begin{document}
%\selectlanguage{english}

% title page
\maketitle

% abstract
\abstract{ In this paper we propose a reduction scheme for multivector
  fields phrased in terms of $L_\infty$-morphisms. Using well-know
  geometric properties of the reduced manifolds we perform a Taylor
  expansion of multivector fields, which allows us to built up a
  suitable deformation retract of DGLA's. We first obtained an
  explicit formula for the $L_\infty$-Projection and -Inclusion of
  generic DGLA retracts.  We then applied this formula to the
  deformation retract that we constructed in the case of multivector
  fields on reduced manifolds.  This allows us to obtain the desired
  reduction $L_\infty$-morphism.  Finally, we perfom a comparison with
  other reduction procedures. }

\newpage

% table of contents
\tableofcontents

%
% section: Introduction
%

\section{Introduction}

This paper aims to propose a reduction scheme for multivector 
fields that is phrased in terms of $L_\infty$-morphisms 
and adapted to deformation quantization. 
Deformation quantization has been introduced in \cite{bayen.et.al:1978a} by
Bayen, Flato, Fronsdal, Lichnerowicz and Sternheimer and it relies on the
idea that the quantization of a Poisson manifold $M$
is described by a formal deformation of the commutative algebra of smooth
complex-valued functions $\Cinfty(M)$, a so-called \emph{star product}. 
The existence and classification of 
star products on Poisson manifolds has been provided by Kontsevich's
formality theorem \cite{kontsevich:2003a}, whereas the invariant 
setting of Lie group actions has been treated by 
Dolgushev, see~\cite{dolgushev:2005a,dolgushev:2005b}. In the last 
years many developments have been done, see e.g. \cite{calaque:2005a, calaque:2007a, liao:2019a}.
More explicitly, the formality provides an $L_\infty$-quasi-isomorphism between 
the differential graded Lie algebra of multivector fields and 
the multidifferential operators resp. the invariant versions. 
One open question and our main motivation is 
to investigate the compatibility of deformation quantization and 
phase space reduction in the Poisson setting.

In the classical setting one considers here the Marsden-Weinstein reduction 
\cite{marsden.weinstein:1974a}. Suppose that a Lie group 
$\group{G}$ acts by Poisson diffeomorphisms on the Poisson manifold $M$ 
and that it allows an $\Ad^*$-equivariant momentum map 
$J\colon M\longrightarrow \liealg{g}^*$ with $0\in \liealg{g}^*$
as value and regular value, where $\liealg{g}$ is the Lie algebra of
$\group{G}$. Then $C = J^{-1}(\{0\})$ is a closed embedded submanifold of
$M$ and the reduced manifold $M_\red = C/\group{G}$ is again a Poisson 
manifold if the action on $C$ is proper and free. Reduction theory is very important
and it is still very active field of research. Among the others, we mention the categorical
reformulation performed in \cite{dippell:2019a}.

In the setting of deformation quantization a quantum reduction scheme
has been introduced in \cite{bordemann.herbig.waldmann:2000a},
see also \cite{gutt.waldmann:2010a} for a slightly different formulation,
which allows the study of the compatibility between the reduction scheme 
and the properties of the star product, as in \cite{esposito:2019a}.
One crucial ingredient are quantum momentum maps (see~\cite{xu:1998a}) 
and pairs consisting of star products with compatible quantum 
momentum maps are called \emph{equivariant star products}.
For symplectic manifolds these equivariant star products have recently been 
classified and it has been shown that quantization commutes 
with reduction, see \cite{reichert.waldmann:2016a,reichert:2017a,reichert:2017b}. 
More precisely, equivariant star products on $M$ are classified by certain 
elements in the cohomology of the Cartan model for equivariant de Rham cohomology 
\cite{guillemin.sternberg:1999a} 
and the characteristic classes of the equivariant star product and 
the reduced star product are related by pull-backs. 

In the more general setting of Poisson manifolds, star products are 
classified by Maurer-Cartan elements in the DGLA of multivector fields, 
i.e. by formal Poisson 
structures. Unfortunately in this case there is no pull-back available and one has to 
use different techniques. Motivated by the aim of 
reducing the formality, we want to describe the reductions 
in terms of $L_\infty$-morphisms. In particular, in this paper we 
construct such a reduction for the classical side, i.e. for the 
\emph{equivariant multivector fields} $T_\liealg{g}(M)$, a certain DGLA whose 
Maurer-Cartan elements are invariant Poisson structures 
with equivariant momentum maps. Assuming for simplicity $M = C \times 
\liealg{g}^*$, which always holds locally in suitable situations, 
we can perform a Taylor 
expansion around $C$, obtaining a new DGLA $T_\Tay(C\times\liealg{g}^*)$. 
On $C\times \liealg{g}^*$ we have the canonical momentum map $J$ given 
by the projection on $\liealg{g}^*$ and the canonical linear Poisson structure 
$\pi_\KKS$ induced by the action Lie algebroid. They give 
a new DGLA structure on $T_\Tay(C\times \liealg{g}^*)$ with differential 
$[\pi_\KKS -J, \argument]$ and we show that this DGLA 
is quasi-isomorphic to the multivector fields on $M_\red$, as desired. 
One has an $L_\infty$-quasi-isomorphism between these two 
DGLA's, see Theorem~\ref{thm:ClassicalTredTaylor}:

\begin{nntheorem}
  There exists an $L_\infty$-quasi-isomorphism $\tilde{\mathrm{T}}_\red \colon 
	T_\Tay(C\times \liealg{g}^*) \rightarrow T_\poly(M_\red)$.
\end{nntheorem}
The morphism $\tilde{\mathrm{T}}_\red$ is obtained by inverting 
a certain inclusion $i$ of DGLA's. In order to give a more explicit formula  
we look at general deformation retracts: let $(A, \D_A)$ and 
$(B,\D_B)$ be two differential graded Lie algebras and assume 
that we have
\begin{equation}
  \label{eq:IntroDefRetract}
  \begin{tikzcd} 
	(A,\D_A)
	\arrow[rr, shift left, "i"] 
  &&   
  \left(B, \D_B\right)
  \arrow[ll, shift left, "p"]
	\arrow[loop right, "h"] 
\end{tikzcd} 
\end{equation}
where $i$ and $p$ are quasi-isomorphisms of cochain complexes with
homotopy $h$, and where $ p \circ i = \id_A$ and $h^2 = h \circ i = p \circ h = 0$. 
Using for a coalgebra morphism $F \colon \Sym(B[1]) \rightarrow \Sym(A[1])$ 
the notation
\begin{equation*}
  L_{\infty,k+1}(F)
  =
 	Q^1_{A,2} \circ F^2_{k+1} - F^1_k \circ Q^k_{B,k+1},
\end{equation*}
where $Q^k_{A,k+1}$ and $Q^k_{B,k+1}$ are the extensions 
of the Lie brackets to the symmetric algebras, and extending $h$ 
in an appropriate way to $H_k$  on $\Sym^k(B[1])$, we prove 
in Proposition~\ref{prop:Infinityprojection} and 
\ref{prop:Infinityinclusion}:

\begin{nnproposition}
  Given a deformation retract as in \eqref{eq:IntroDefRetract}.
  \begin{propositionlist}
  \item If $i$ is a DGLA morphism, then $P \colon \Sym^\bullet(B[1])
    \rightarrow \Sym^\bullet (A[1])$ with structure maps $P_1^1 = p$
    and $P_{k+1}^1 = L_{\infty,k+1}(P) \circ H_{k+1}$ for $k \geq 1$
    yields an $L_\infty$-quasi-isomorphism that is quasi-inverse to
    $i$.
  \item If $p$ is a DGLA morphism, then $I\colon \Sym^\bullet
    (A[1])\to \Sym^\bullet (B[1])$ with structure maps $I_1^1=i$ and
    $I_k^1=h\circ L_{\infty,k}(I)$ for $k\geq 2$ is an
    $L_\infty$-quasi-isomorphism that is quasi-inverse to $p$.
  \end{propositionlist}
\end{nnproposition}
This allows us to give a more explicit description of
$\tilde{\mathrm{T}}_\red$ and its $L_\infty$-quasi-inverse. Moreover,
it allows us to globalize the result, compare
Theorem~\ref{thm:ClassicalTred2}:

\begin{nntheorem}
  There exists a curved $L_\infty$-morphism 
	\begin{align}
	  \mathrm{T}_\red\colon 
		(T_{\liealg{g}}(M),\lambda,-[J,\argument],  [\argument,\argument])
	  \longrightarrow 
	  (\Tpoly(M_\mathrm{red}),0,0,[\argument,\argument]),
	\end{align}	   
	where the curvature $\lambda = e^i \otimes (e_i)_M$ is given by  
	the fundamental vector fields of the $\group{G}$-action. 
\end{nntheorem}
We call $\mathrm{T}_\red$ \emph{reduction $L_\infty$-morphism} and we
extend the statements to the setting of formal power series in
$\hbar$.  After rescaling the involved curvatures and differentials
appropriately, $\mathrm{T}_\red$ gives in particular a way to
associate formal Maurer-Cartan elements.  In
$T_\liealg{g}(M)[[\hbar]]$
resp. $T_\Tay(C\times\liealg{g}^*)[[\hbar]]$ with rescaled structures,
formal Maurer-Cartan elements can be interpreted as formal Poisson
structures $\pi_\hbar$ with formal momentum map $J_\hbar=J + \hbar
J'$.  Thus, we have the following properties:
\begin{itemize}
\item The Poisson bracket $\{\argument,\argument\}_\hbar$ induced by
  $\pi_\hbar$ is $\group{G}$-invariant,
			
\item The fundamental vector fields are given by $\xi_M = \{
  \,\cdot\,,J_\hbar(\xi)\}_\hbar \in \Secinfty(TM)$, and
				
\item $\{ J_\hbar(\xi), J_\hbar(\eta)\}_\hbar = J_\hbar([\xi,\eta])$.
\end{itemize}
Comparing the orders of $\hbar$ directly shows that the lowest order
is a well-defined Poisson structure on $M$ and that $J$ is an
equivariant momentum map with respect to it, and $\mathrm{T}_\red$
maps such an object to a formal Poisson structure on $M_\red$.

Note that there is also another reduction scheme for such formal
Poisson structures with formal momentum maps, obtained by adapting the
reduction scheme for star products from
\cite{bordemann.herbig.waldmann:2000a,gutt.waldmann:2010a}, i.e.
using the homological perturbation lemma \cite{crainic:2004a:pre}.
Finally, we show in Theorem~\ref{thm:FormalReductionsEqual}:

\begin{nntheorem}
  The reduction of formal equivariant Poisson structures with formal
  momentum maps via the reduction $L_\infty$-morphism coincides with
  the reduction of formal Poisson structures via the homological
  perturbation lemma.
\end{nntheorem}
%
%to recheck at the end:
The paper is organized as follows: In Section~\ref{sec:Preliminaries}
we recall the basic notions of (curved) $L_\infty$-algebras,
$L_\infty$-morphisms and twists. In
Section~\ref{sec:ExplicitFormulasDefRetracts} we consider general
deformation retracts of DGLA's and prove the explicit formulas for the
extensions of the inclusion resp. projection to $L_\infty$-morphisms
needed to describe $\tilde{\mathrm{T}}_\red$ in
Section~\ref{sec:TaylorSeriesofTpoly}. Here we also construct the
reduction scheme for the Taylor expansion, both in the classical and
the formal setting.  Finally, in Section~\ref{sec:ReductionMorphism}
we construct the global reduction $L_\infty$-morphism and compare the
reduction via $\mathrm{T}_\red$ with the classical Marsden-Weinstein
reduction and with the reduction of formal Poisson structures via the
homological perturbation lemma as explained in
Appendix~\ref{sec:BRSTlikeReduction}.

\subsection*{Acknowledgements}
  The authors are grateful to Alejandro Cabrera and Stefan Waldmann 
	for the helpful comments.
  This work was supported by the National Group for Algebraic and 
	Geometric Structures, and their Applications (GNSAGA – INdAM).
  The third author is supported by the DFG research training group 
  "gk1821: Cohomological Methods in Geometry"

%
% section: Preliminaries
%

\section{Preliminaries}
\label{sec:Preliminaries}

In this section we recall the notions of (curved) $L_\infty$-algebras,
$L_\infty$-morphisms and their twists by Maurer--Cartan elements to
fix the notation. Proofs and further details can be found in
\cite{dolgushev:2005a,dolgushev:2005b,esposito.dekleijn:2018a:pre}.

We denote by $V^\bullet$ a graded vector space over a field $\mathbb{K}$ of
characteristic $0$ and define the \emph{shifted} vector space
$V[k]^\bullet$ by
\begin{equation*}
  V[k]^\ell
  =
  V^{\ell+k}.
\end{equation*}
A degree $+1$ coderivation $Q$ on the coaugmented counital conilpotent
cocommutative coalgebra $S^c(\mathfrak{L})$ cofreely cogenerated by
the graded vector space $\mathfrak{L}[1]^\bullet$ over $\mathbb{K}$ is
called an \emph{$L_\infty$-structure} on the graded vector space
$\mathfrak{L}$ if $Q^2=0$. The (universal) coalgebra
$S^c(\mathfrak{L})$ can be realized as the symmetrized
deconcatenation coproduct on the space
$\bigoplus_{n\geq0}\Sym^n\mathfrak{L}[1]$ where $\Sym^n\mathfrak{L}[1]$ 
is the space of coinvariants for the
usual (graded) action of $S_n$ (the symmetric group in $n$ letters) on
$\otimes^n(\mathfrak{L}[1])$, see e.g. \cite{esposito.dekleijn:2018a:pre}. 
Any degree $+1$ coderivation $Q$ on $S^c(\mathfrak{L})$ is uniquely determined by the
components
\begin{equation}
  Q_n\colon \Sym^n(\mathfrak{L}[1])\longrightarrow \mathfrak{L}[2]
\end{equation}
through the formula 
\begin{equation}
  Q(\gamma_1\vee\ldots\vee\gamma_n)
  =
  \sum_{k=0}^n\sum_{\sigma\in\mbox{\tiny Sh($k$,$n-k$)}}
  \epsilon(\sigma)Q_k(\gamma_{\sigma(1)}\vee\ldots\vee
  \gamma_{\sigma(k)})\vee\gamma_{\sigma(k+1)}\vee
  \ldots\vee\gamma_{\sigma(n)}.
\end{equation} 
Here Sh($k$,$n-k$) denotes the set of $(k, n-k)$ shuffles in $S_n$,
$\epsilon(\sigma)=\epsilon(\sigma,\gamma_1,\ldots,\gamma_n)$ is a sign
given by the rule $
\gamma_{\sigma(1)}\vee\ldots\vee\gamma_{\sigma(n)}=
\epsilon(\sigma)\gamma_1\vee\ldots\vee\gamma_n $ and we use the
conventions that Sh($n$,$0$)=Sh($0$,$n$)$=\{\id\}$ and that the empty
product equals the unit. Note in particular that we also consider a
term $Q_0$ and thus we are actually considering curved
$L_\infty$-algebras (which will be convenient in the following).
Sometimes we also write $Q_k = Q_k^1$ and following
\cite{canonaco:1999a} we denote by $Q_n^i$ the component of $Q_n^i
\colon \Sym^n L[1] \rightarrow \Sym^i L[2]$ of $Q$. It is given by
\begin{equation}
  \label{eq:Qniformula}
  Q_n^i(x_1\vee \cdots \vee x_n)
	=
	\sum_{\sigma \in \mathrm{Sh}(n+1-i,i-1)}
	\epsilon(\sigma) Q_{n+1-i}^1(x_{\sigma(1)}\vee \cdots\vee  x_{\sigma(n+1-i)})\vee
	x_{\sigma(n+2-i)} \vee \cdots \vee x_{\sigma(n)},
\end{equation}
where $Q_{n+1-i}^1$ are the usual structure maps. 
\begin{example}[Curved Lie algebra]
  \label{ex:curvedlie}
  A basic example of an $L_\infty$-algebra is that of a (curved) Lie
  algebra $(\mathfrak{L},R,\D,[\argument,\argument])$ by setting
  $Q_0(1)={ -}R$, $Q_1={ -}\D$,
  $Q_2(\gamma\vee\mu)={ -(-1)^{|\gamma|}}[\gamma,\mu]$ and $Q_i=0$ for
  all $i\geq 3$.   Note that we denoted the degree in $\mathfrak{L}[1]$ by $|\cdot |$. 
\end{example}

Let us consider two $L_\infty$-algebras $(\mathfrak{L},Q)$ and
$(\widetilde{\mathfrak{L}},\widetilde{Q})$.  A degree $0$ counital
coalgebra morphism
\begin{equation*}
  F\colon 
  S^c(\mathfrak{L})
  \longrightarrow 
  S^c(\widetilde{\mathfrak{L}})
\end{equation*}
such that $FQ = \widetilde{Q}F$ is said to be an
\emph{$L_\infty$-morphism}.
A coalgebra morphism $F$ from $S^c(\mathfrak{L})$ to
$S^c(\widetilde{\mathfrak{L}})$ such that $F(1)=1$ is uniquely determined by its
components (also called \emph{Taylor coefficients})
\begin{equation*}
  F_n\colon \Sym^n(\mathfrak{L}[1])\longrightarrow \widetilde{\mathfrak{L}}[1],
\end{equation*}
where $n\geq 1$. Namely, we set $F(1)=1$ and use the formula
\begin{equation*}
  F(\gamma_1\vee\ldots\vee\gamma_n)=
\end{equation*}
\begin{equation}
\label{coalgebramorphism}
  \sum_{p\geq1}\sum_{\substack{k_1,\ldots, k_p\geq1\\k_1+\ldots+k_p=n}}
  \sum_{\sigma\in \mbox{\tiny Sh($k_1$,..., $k_p$)}}\frac{\epsilon(\sigma)}{p!}
  F_{k_1}(\gamma_{\sigma(1)}\vee\ldots\vee\gamma_{\sigma(k_1)})\vee\ldots\vee 
  F_{k_p}(\gamma_{\sigma(n-k_p+1)}\vee\ldots\vee\gamma_{\sigma(n)}),
\end{equation}
where Sh($k_1$,...,$k_p$) denotes the set of $(k_1,\ldots,
k_p)$-shuffles in $S_n$ (again we set Sh($n$)$=\{\id\}$).
We also write $F_k = F_k^1$ and similarly to \eqref{eq:Qniformula} we get 
coefficients $F_n^j \colon \Sym^n L[1] \rightarrow \Sym^j L'[1]$ of $F$ by 
taking the corresponding terms in \cite[Equation~(2.15)]{dolgushev:2006a}. 
Note that $F_n^j$ depends only on $F_k^1 = F_k$ for $k\leq n-j+1$. 
Given an $L_\infty$-morphism $F$ of (non-curved) $L_\infty$-algebras $(\mathfrak{L},Q)$ and
$(\widetilde{\mathfrak{L}},\widetilde{Q})$,
we obtain the map of complexes
\begin{equation*}
  F_1\colon (\mathfrak{L},Q_1)\longrightarrow (\widetilde{\mathfrak{L}},\widetilde{Q}_1).
\end{equation*}
In this case the $L_\infty$-morphism $F$ is called an
\emph{$L_\infty$-quasi-isomorphism} if $F_1$ is a
quasi-isomorphism of complexes.
Given a dgla $(\liealg{L}, \D, [\argument,\argument])$ and an element
$\pi\in \mathfrak{L}[1]^0$ we can obtain a curved Lie algebra by
defining a new differential $\D + [\pi,\argument]$ and considering the
curvature $R^\pi=\D\pi+\frac{1}{2}[\pi,\pi]$.
In fact the same procedure can be applied to a curved Lie algebra
$(\mathfrak{L}, R,\D, [\argument,\argument])$ to obtain the twisted
curved Lie algebra $(\mathfrak{L}, R^\pi, \D +
[\pi,\argument],[\argument,\argument])$, where
\begin{equation*}
  R^\pi
  :=
  R+\D\pi + \frac{1}{2}[\pi,\pi].
\end{equation*} 
The element $\pi$ is called a \emph{Maurer--Cartan element} if it
satisfies the equation
\begin{equation*}
  R+\D\pi+\frac{1}{2}[\pi,\pi]
  =
  0.
\end{equation*}
Finally, it is important to recall that given a dgla morphism, or more
generally an $L_\infty$-morphism, $F\colon \mathfrak{L}\rightarrow
\widetilde{\mathfrak{L}}$, one may associate to any Maurer--Cartan
element $\pi\in\mathfrak{L}[1]^0$ a Maurer--Cartan element
\[\pi_F:=\sum_{n\geq 1} \frac{1}{n!}   F_n(\pi\vee\ldots\vee\pi)\in \widetilde{\mathfrak{L}}[1]^0\]

In order to make sense of these infinite sums we consider complete
filtered $L_\infty$-algebras and we demand that Maurer--Cartan
elements are in a positive filtration, see
\cite{esposito.dekleijn:2018a:pre,dolgushev:2005a} for details on such
filtrations.

%
%An Explicit Formula for the $L_\infty$-Projection
%

\section{An Explicit Formula for the $L_\infty$-Projection and -Inclusion}
\label{sec:ExplicitFormulasDefRetracts}

From the general theory of $L_\infty$-algebras one knows that
$L_\infty$-quasi-isomorphisms always admit
$L_\infty$-quasi-inverses. Moreover, it is well-known that given a
homotopy retract one can transfer $L_\infty$-structures. Explicitly,
given two cochain complexes $(A,\D_A)$ and $(B,\D_B)$ with
\begin{equation}
  \begin{tikzcd} 
	(A,\D_A)
	\arrow[rr, shift left, "i"] 
  &&   
  \left(B, \D_B\right)
  \arrow[ll, shift left, "p"]
	\arrow[loop right, "h"] 
\end{tikzcd} 
\end{equation}
where $h \circ \D_B + \D_B \circ h = \id - i \circ p$ and where $i$ is
a quasi-isomorphism, the homotopy transfer Theorem
\cite[Section~10.3]{loday.vallette:2012a} states that if there exists
an $L_\infty$-structure on $B$, then one can transfer it to $A$ in
such a way that $i$ extends to an $L_\infty$-quasi-isomorphism.

Let us consider the special case of deformation retracts for
DGLA's. More explicitly, let $A,B$ be two DGLA's. A deformation
retract of $(A, \D_A)$ is given by the diagram
\begin{equation}
  \label{eq:Contraction}
  \begin{tikzcd} 
	(A,\D_A)
	\arrow[rr, shift left, "i"] 
  &&   
  \left(B, \D_B\right)
  \arrow[ll, shift left, "p"]
	\arrow[loop right, "h"] 
\end{tikzcd} 
\end{equation}
where $i$ and $p$ are quasi-isomorphisms of cochain complexes with
homotopy $h$, i.e. $h\D_B + \D_B h = \id_B - ip$, as well as
\begin{equation*}
  p\circ i
  =
  \id_A,
  \quad \quad 
  h^2 
  =
  0,
  \quad \quad 
  h \circ i
  =
  0
  \quad \text{ and } \quad 
  p\circ h 
  = 
  0.
\end{equation*} 
In addition, we assume that $i$ is a DGLA morphism. As already
mentioned, the homotopy transfer theorem and the invertibility of
$L_\infty$-quasi-isomorphisms imply that $p$ extends to an
$L_\infty$-quasi-isomorphism denoted by $P$, see
e.g. \cite[Prop.~10.3.9]{loday.vallette:2012a}.  In the following we
give a more explicit description of $P$.
The DGLA structures yield the codifferentials $Q_A$ on $\Sym (A[1])$
and $Q_B$ on $\Sym(B[1])$ and the map $h$ extends to a homotopy $H_n
\colon \Sym^n (B[1]) \rightarrow \Sym^n(B[1])[-1]$ with respect to
$Q_{B,n}^n \colon \Sym^n (B[1]) \rightarrow \Sym^n(B[1])[1]$, see
e.g. \cite[p.~383]{loday.vallette:2012a} for the construction on the
tensor algebra, which adapted to our setting works roughly like:
we define the operator 
	\begin{align*}
	K_n
	\colon 
	\Sym^n(B[1])
	\to 
	\Sym^n(B[1])
	\end{align*}
by 
	\begin{align*}
	K_n(x_1\vee\cdots\vee x_n)
	=
	\frac{1}{n!} 
	\sum_{i=0}^{n-1}
	\sum_{\sigma\in S_n}
	\frac{\epsilon(\sigma)}{n-i}ipX_{\sigma(1)}\vee\cdots\vee ipX_{\sigma(i)}
	\vee X_{\sigma(i+1)}\vee X_{\sigma(n)}.
	\end{align*}
Note that here we sum over the whole symmetric group and 
not the shuffles, since in this case the formulas are easier. We extend $-h$ to a 
coderivation to  $\Sym(B[1])$, i.e.
	\begin{align*}
	\tilde{H}_n(x_1\vee\cdots\vee x_n):=
	-\sum_{\sigma\in \mathrm{Sh}(1,n-1)}
	\epsilon(\sigma) \;
	hx_{\sigma(1)}\vee x_{\sigma(2)}\vee\cdots\vee x_{\sigma(n)}
	\end{align*}
and define 
	\begin{align*}
	H_n=K_n\circ \tilde{H}_n
	=
	\tilde{H}_n\circ K_n.
	\end{align*}
Since $i$ and $p$ are chain maps, we have 
	\begin{align*}
	K_n\circ Q_{B,n}^n=Q_{B,n}^n\circ K_n,
	\end{align*}
where $Q_{B,n}^n$ is the extension of the differential $Q_{B,1}^1 = - \D_B$ to 
$\Sym^n(B[1])$ as coderivation. Hence we have 
	\begin{align*}
	Q_{B,n}^n H_n + H_n Q_{B,n}^n
	=
	(n\cdot\id-ip)\circ K_n,
	\end{align*}
where $ip$ is extended as a coderivation to $\Sym(B[1])$. A combinatorial and 
not very enlightning computation shows that finally 
\begin{equation}
	\label{Eq: ExtHomotopy}
  	Q_{B,n}^n H_n + H_n Q_{B,n}^n
	=
	\id - (ip)^{\vee n}.
\end{equation}
Suppose that we have constructed a morphism of coalgebras $P$
with structure maps $P_k^1 \colon \Sym^k(B[1]) \rightarrow A[1]$ that
is an $L_\infty$-morphism up to order $k$, i.e.
\begin{equation*}
  \sum_{\ell=1}^m P^1_\ell \circ Q_{B,m}^\ell
  =
  \sum_{\ell=1}^m Q_{A,\ell}^1\circ P^\ell_{m}
\end{equation*}
for all $m \leq k$. Then we have the following statement.
\begin{lemma}
  Let $P \colon \Sym(B[1]) \rightarrow \Sym (A[1])$ be an
  $L_\infty$-morphism up to order $k\geq 1$. Then
  \begin{equation}
    \label{eq:Linftykplusone}
    L_{\infty,k+1}
    =
    \sum_{\ell = 2}^{k+1} Q_{A,\ell}^1 \circ P^\ell_{k+1} - \sum_{\ell =1}^{k} 
    P_\ell^1 \circ 	Q^\ell_{B,k+1}
		=
		Q^1_{A,2} \circ P^2_{k+1} - P^1_k \circ Q^k_{B,k+1}
  \end{equation}
  satisfies
  \begin{equation}
    \label{eq:linftycommuteswithq}
    L_{\infty,k+1} \circ Q_{B,k+1}^{k+1}
    =
    -Q_{A,1}^1 \circ L_{\infty,k+1}.
  \end{equation}
\end{lemma}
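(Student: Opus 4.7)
The plan is to exhibit $L_{\infty,k+1}$ as the cogenerator component of the natural obstruction arising from truncating $P$ at order $k$, and then to exploit $Q_A^{2}=Q_B^{2}=0$ together with the inductive hypothesis.

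Concretely, let $\tilde{P}\colon \Sym(B[1])\to\Sym(A[1])$ be the coalgebra morphism with Taylor coefficients $P_1^{1},\ldots,P_k^{1},0,0,\ldots$. Since each $\tilde{P}_{n}^{\ell}$ depends only on $P_j^{1}$ with $j\leq n-\ell+1$, one has $\tilde{P}_{m}^{\ell}=P_{m}^{\ell}$ for all $m\leq k$ as well as for $m=k+1$ with $\ell\geq 2$, while $\tilde{P}_{k+1}^{1}=0$. Combined with $Q_{A,\ell}^{1}=0$ for $\ell\geq 3$ and $Q_{B,k+1}^{\ell}=0$ for $\ell\notin\{k,k+1\}$ (the DGLA restriction), the definition \eqref{eq:Linftykplusone} identifies
\[
   L_{\infty,k+1} \;=\; \bigl(Q_A\tilde{P} - \tilde{P}Q_B\bigr)_{k+1}^{1}.
\]

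Now set $\mathcal{L}:=Q_A\tilde{P}-\tilde{P}Q_B$. A direct computation with the coproduct, using that $\tilde{P}$ is a coalgebra morphism and $Q_A,Q_B$ are coderivations, shows that $\mathcal{L}$ is a $\tilde{P}$-coderivation,
\[
  \Delta\circ \mathcal{L} \;=\; \bigl(\mathcal{L}\otimes \tilde{P} + \tilde{P}\otimes \mathcal{L}\bigr)\circ \Delta.
\]
As with \eqref{coalgebramorphism}, this forces each $\mathcal{L}_{n}^{\ell}$ to be a shuffle sum in which exactly one block of size $m$ is occupied by $\mathcal{L}_{m}^{1}$ and the remaining $\ell-1$ blocks by Taylor coefficients of $\tilde{P}$. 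The assumption that $P$ is an $L_\infty$-morphism up to order $k$ reads precisely $\mathcal{L}_{m}^{1}=0$ for all $m\leq k$. Hence for $\ell\geq 2$ every term in $\mathcal{L}_{k+1}^{\ell}$ contains an $\mathcal{L}_{m}^{1}$-block with $m\leq k+1-(\ell-1)\leq k$, so $\mathcal{L}_{k+1}^{\ell}=0$ for all $\ell\geq 2$.

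To finish, $Q_A^{2}=0=Q_B^{2}$ gives $Q_A\mathcal{L}+\mathcal{L}Q_B = Q_A^{2}\tilde{P}-\tilde{P}Q_B^{2}=0$. Projecting this identity onto $\Sym^{1}(A[2])$ from $\Sym^{k+1}(B[1])$, the vanishings established above collapse all sums except the $\ell=1$ terms, yielding
\[
  Q_{A,1}^{1}\circ L_{\infty,k+1} + L_{\infty,k+1}\circ Q_{B,k+1}^{k+1} = 0,
\]
which is \eqref{eq:linftycommuteswithq}. I expect the main obstacle to be making Step~2 rigorous: writing down the $\tilde{P}$-coderivation component formula with the correct shuffles and Koszul signs, and using it to track exactly which $\mathcal{L}_{m}^{1}$-block can appear in $\mathcal{L}_{k+1}^{\ell}$. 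A purely direct alternative — expanding $Q_{A,1}^{1}L_{\infty,k+1}+L_{\infty,k+1}Q_{B,k+1}^{k+1}$ and invoking $\D_A^{2}=0$, the graded Jacobi identity, the graded Leibniz rule and the $L_\infty$-relations at orders $\leq k$ — is available but combinatorially much heavier, while the $\tilde{P}$-coderivation viewpoint makes transparent the structural reason that the obstruction $L_{\infty,k+1}$ is automatically a cocycle for $Q_{A,1}^{1}$ and $Q_{B,k+1}^{k+1}$.
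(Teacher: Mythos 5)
Your argument is correct, and it rests on the same two pillars as the paper's proof---the vanishing at order $k+1$ of the components of the defect $Q_A P - P Q_B$ landing in symmetric degree $\ell\geq 2$, and the relations $Q_A^2=Q_B^2=0$---but it packages them differently. The paper expands $L_{\infty,k+1}\circ Q_{B,k+1}^{k+1}$ by direct index bookkeeping and, in passing from the first to the second line of its computation, substitutes $(P\circ Q_B)^\ell_{k+1}=(Q_A\circ P)^\ell_{k+1}$ for $\ell\geq 2$ without comment; that substitution is precisely your claim $\mathcal{L}^\ell_{k+1}=0$ for $\ell\geq 2$, which you derive structurally from the fact that $\mathcal{L}=Q_A\tilde P-\tilde P Q_B$ is a $(\tilde P,\tilde P)$-coderivation whose cogenerating components vanish up to order $k$. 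So your route is not really a different proof, but it makes explicit the one step the paper's ``straightforward computation'' glosses over, and the final identity $Q_A\mathcal{L}+\mathcal{L}Q_B=0$ replaces the paper's cancellation of double sums by a one-line projection. Two small points to nail down when writing it up: the component formula for a $(\tilde P,\tilde P)$-coderivation is the analogue of \eqref{coalgebramorphism} with exactly one block replaced by $\mathcal{L}^1_m$ (no extra Koszul signs enter the coderivation identity since $\tilde P$ has degree $0$), and your bound $m\leq k+1-(\ell-1)\leq k$ uses that each $\tilde P$-block absorbs at least one input, which holds because the DGLAs in the retract are flat, so there are no $Q_0$- or empty-block contributions.
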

\begin{proof}
  The statement follows from a straightforward computation. For
  convenience we omit the index of the differential:
  \begin{align*}
    L_{\infty,k+1} Q_{k+1}^{k+1}
    & = 
    \sum_{\ell = 2}^{k+1} Q_{\ell}^1 (P\circ Q)^\ell_{k+1}  
    - \sum_{\ell = 2}^{k+1} \sum_{i=1}^k Q_{\ell}^1 P^\ell_i Q^i_{k+1}
    + \sum_{\ell =1}^{k}\sum_{i=1}^k P_\ell^1 Q^\ell_i Q_{k+1}^i  \\
    & =
    \sum_{\ell = 2}^{k+1} Q_{\ell}^1 (Q\circ P)^\ell_{k+1}  
    - \sum_{\ell = 2}^{k+1} \sum_{i=1}^k Q_{\ell}^1 P^\ell_i Q^i_{k+1}
    + \sum_{\ell =1}^{k}\sum_{i=1}^k Q_\ell^1 P^\ell_i Q_{k+1}^i  \\
    & =
    -Q_1^1 (Q \circ P)^1_{k+1} + Q_1^1 \sum_{i=1}^k P^1_{i}Q^i_{k+1} 
    = 
    - Q_1^1 L_{\infty,k+1},
  \end{align*}
  where the last equality follows from $Q_1^1Q_1^1 = 0$.
\end{proof}
This allows us to obtain the $L_\infty$-quasi-inverse of $i$, denoted
by $P$, in \eqref{eq:Contraction} recursively:
\begin{proposition}
  \label{prop:Infinityprojection}
  Defining $P_1^1 = p$ and $P_{k+1}^1 = L_{\infty,k+1} \circ H_{k+1}$
  for $k \geq 1$ yields an $L_\infty$-quasi-isomorphism $P \colon
  \Sym(B[1]) \rightarrow \Sym (A[1])$ that is quasi-inverse to $i$.
\end{proposition}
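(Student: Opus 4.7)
The plan is to proceed by induction on $k \geq 0$, simultaneously establishing two statements: (i) the components $P_1^1,\ldots,P_{k+1}^1$ assemble into an $L_\infty$-morphism up to order $k+1$, and (ii) $P_j^1\circ i^{\vee j} = 0$ for every $2\leq j\leq k+1$, together with $P_1^1\circ i = \id_A$. The base case $k=0$ reduces (i) to the chain-map identity $p\circ Q_{B,1}^1 = Q_{A,1}^1\circ p$ and (ii) to the hypothesis $p\circ i = \id_A$.

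For the inductive step I would first observe that, after separating the two terms that involve $P_{k+1}^1$ itself, the $L_\infty$-morphism relation at order $k+1$ is equivalent to the single identity
\begin{equation*}
  P_{k+1}^1 \circ Q_{B,k+1}^{k+1} - Q_{A,1}^1 \circ P_{k+1}^1
  =
  L_{\infty,k+1}.
\end{equation*}
Substituting $P_{k+1}^1 = L_{\infty,k+1}\circ H_{k+1}$, rewriting $H_{k+1}\circ Q_{B,k+1}^{k+1}$ via the extended homotopy identity \eqref{Eq: ExtHomotopy}, and using the preceding lemma to replace $L_{\infty,k+1}\circ Q_{B,k+1}^{k+1}$ by $-Q_{A,1}^1\circ L_{\infty,k+1}$, two of the resulting terms cancel outright. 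The task thus reduces to the key vanishing $L_{\infty,k+1}\circ (ip)^{\vee(k+1)} = 0$; since $(ip)^{\vee(k+1)}$ factors through $i^{\vee(k+1)}$, it is enough to prove $L_{\infty,k+1}\circ i^{\vee(k+1)} = 0$.

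To establish this I would expand $L_{\infty,k+1} = Q^1_{A,2}\circ P^2_{k+1} - P^1_k\circ Q^k_{B,k+1}$ and push $i^{\vee(k+1)}$ inside each summand. Strictness of $i$ as a DGLA morphism gives $Q^k_{B,k+1}\circ i^{\vee(k+1)} = i^{\vee k}\circ Q^k_{A,k+1}$, while the coalgebra formula \eqref{coalgebramorphism} expresses $P^2_{k+1}\circ i^{\vee(k+1)}$ as a sum of terms of the form $P^1_{k_1}(i^{\vee k_1}(\cdot))\vee P^1_{k_2}(i^{\vee k_2}(\cdot))$ with $k_1+k_2=k+1$. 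For $k\geq 2$ every such piece is killed by the inductive hypothesis (ii); for $k=1$ the two summands of $L_{\infty,2}$ reduce, via $p\circ i=\id_A$ and the bracket-compatibility of $i$, to opposite copies of $Q_{A,2}^1(a_1\vee a_2)$ and cancel. Statement (ii) at step $k+1$ is then immediate: since $H_{k+1}$ applies $h$ to at least one tensor factor, the relation $h\circ i = 0$ forces $H_{k+1}\circ i^{\vee(k+1)} = 0$, and hence $P_{k+1}^1\circ i^{\vee(k+1)} = 0$.

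To finish, $P_1^1 = p$ is a quasi-isomorphism of complexes by hypothesis, so $P$ is automatically an $L_\infty$-quasi-isomorphism. Strictness of $i$ gives $(P\circ i)_n^1 = P_n^1 \circ i^{\vee n}$, which by (ii) equals $\id_A$ for $n=1$ and vanishes for $n\geq 2$; thus $P\circ i = \id_A$ strictly, and $P$ is quasi-inverse to $i$. The main obstacle I foresee is the clean verification of $L_{\infty,k+1}\circ i^{\vee(k+1)} = 0$: conceptually this is merely strictness of $i$ combined with the inductive hypothesis, but executing it requires disciplined bookkeeping with the higher components $Q_n^i$ and $P_n^j$ from \eqref{eq:Qniformula} and \eqref{coalgebramorphism} and their shuffle signs.
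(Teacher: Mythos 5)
Your proposal is correct and follows essentially the same route as the paper's proof: induction on the order, reduction of the order-$(k+1)$ relation to $P_{k+1}^1\circ Q_{B,k+1}^{k+1}-Q_{A,1}^1\circ P_{k+1}^1=L_{\infty,k+1}$, the extended homotopy identity \eqref{Eq: ExtHomotopy} combined with the preceding lemma, and the vanishing of $L_{\infty,k+1}$ and $P_{k+1}^1$ on the image of $i^{\vee(k+1)}$. The only difference is presentational: you carry the vanishing statements as an explicit part of the induction and spell out why $L_{\infty,k+1}\circ i^{\vee(k+1)}=0$ via the block decomposition of $P_{k+1}^2$, whereas the paper asserts these two facts up front with less detail.
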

\begin{proof}
  We observe $P_{k+1}^1(ix_1 \vee \cdots \vee i x_{k+1}) = 0$ for all
  $k\geq 1$ and $x_i \in A$, which directly follows from $h\circ i =
  0$ and thus $H_{k+1} \circ i^{\vee (k+1)} = 0$.  In addition, one
  also has for all $k\geq 1$ the identity
  $L_{\infty,k+1}(ix_1,\dots,ix_{k+1}) = 0$, which follows from the
  definition of $L_{\infty,k+1}$ and the fact that $i$ is a morphism
  of DGLAs. We know that $P$ is an $L_\infty$-morphism up to order
  one. Suppose that we already know that it is an $L_\infty$-morphism
  up to order $k \geq 1$, then this implies
  \begin{align*}
    P_{k+1}^1 \circ Q^{k+1}_{k+1}
    & = 
    L_{\infty,k+1} \circ H_{k+1} \circ Q_{k+1}^{k+1} \\
    &=
    L_{\infty,k+1} - L_{\infty,k+1} \circ Q_{k+1}^{k+1}\circ H_{k+1} 
    -L_{\infty,k+1} \circ (i\circ p)^{\vee (k+1)} \\
    & =
    L_{\infty,k+1} + Q_1^1 \circ P_{k+1}^1 
  \end{align*}
  by the above lemma, and therefore
  \begin{equation*}
    P_{k+1}^1 \circ Q^{k+1}_{k+1} -  Q_1^1 \circ P_{k+1}^1
    =
    L_{\infty,k+1}. 
  \end{equation*}
  Hence $P$ is an $L_\infty$-morphism up to order $k+1$ and the
  statement follows inductively.
\end{proof}

%\subsection{$L_\infty$-Inclusion for Deformation Retracts of DGLA's}
%\label{sec:LinftyInclusion}
%Let us again assume we have a deformation retract as in \eqref{eq:Contraction}.
%However, 
Let us now we assume that $p\colon B\to A$ in the deformation retract
\eqref{eq:Contraction}is a DGLA morphism and that $i$ is just a chain
map. Then we can analogously give a formula for the extension $I$ of
$i$ to an $L_\infty$-quasi-isomorphism.

\begin{proposition}
  \label{prop:Infinityinclusion}
  The coalgebra map $I\colon \Sym^\bullet (A[1])\to \Sym^\bullet
  (B[1])$ recursively defined by the maps $I_1^1=i$ and $I_k^1=h\circ
  L_{\infty,k}$ for $k\geq 2$ is an $L_\infty$-quasi inverse of $p$.
  Since $h^2=0= h\circ i$, one even has $I_k^1 = h \circ Q^1_{2} \circ
  I^2_{k}$.
\end{proposition}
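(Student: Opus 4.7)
The plan is to mirror the argument of Proposition~\ref{prop:Infinityprojection}, exchanging the roles of $i$ and $p$. The preceding lemma is in fact entirely symmetric in the direction of the coalgebra map: its derivation uses only the coalgebra axioms and the assumption that the map is an $L_\infty$-morphism up to order $k$, and never that it goes from $B$ to $A$ as opposed to from $A$ to $B$. Applied to $I$, it gives
\[
L_{\infty,k+1}(I)\circ Q_{A,k+1}^{k+1}
=
-Q^1_{B,1}\circ L_{\infty,k+1}(I),
\]
where now $L_{\infty,k+1}(I)=Q^1_{B,2}\circ I^2_{k+1}-I^1_k\circ Q^k_{A,k+1}$. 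The induction starts at $k=1$, where $I^1_1=i$ is a chain map by hypothesis.

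For the inductive step I would substitute $I^1_{k+1}=h\circ L_{\infty,k+1}(I)$ and apply the above identity together with the homotopy relation $h\circ Q^1_{B,1}+Q^1_{B,1}\circ h=ip-\id$ (this is the standard $h\D_B+\D_B h=\id-ip$ rewritten with the sign convention $Q^1_{B,1}=-\D_B$ of Example~\ref{ex:curvedlie}). This rearranges to
\[
I^1_{k+1}\circ Q_{A,k+1}^{k+1}-Q^1_{B,1}\circ I^1_{k+1}
=
L_{\infty,k+1}(I)-ip\circ L_{\infty,k+1}(I),
\]
so the desired $L_\infty$-relation at order $k+1$ reduces to the vanishing $p\circ L_{\infty,k+1}(I)=0$. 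This is the inclusion-side counterpart of the identity $L_{\infty,k+1}(P)\circ i^{\vee(k+1)}=0$ which drove the proof of Proposition~\ref{prop:Infinityprojection}, and is where the hypothesis that $p$ (rather than $i$) is a DGLA morphism must be used.

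The step I expect to be the main obstacle is precisely $p\circ L_{\infty,k+1}(I)=0$. It should follow from the combination of the following observations: since $p$ is a DGLA morphism one has $p\circ Q^1_{B,2}=Q^1_{A,2}\circ(p\vee p)$; and the side conditions of the retract give $p\circ I^1_\ell=0$ for $\ell\geq 2$ (because $I^1_\ell$ starts with $h$ and $p\circ h=0$) while $p\circ I^1_1=\id_A$. Decomposing $I^2_{k+1}$ into summands $I^1_{k_1}\vee I^1_{k_2}$ with $k_1+k_2=k+1$, only the block $k_1=k_2=1$ survives after $p\vee p$, which in turn forces $k=1$; there the resulting $Q^1_{A,2}$ cancels against the $I^1_1\circ Q^1_{A,2}=Q^1_{A,2}$ contribution, while for $k\geq 2$ both summands of $L_{\infty,k+1}(I)$ are annihilated by $p$. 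Once this is in hand, the simplified recursion $I^1_k=h\circ Q^1_2\circ I^2_k$ follows immediately by expanding $L_{\infty,k}(I)$ and noting that the term $h\circ I^1_{k-1}\circ Q^{k-1}_{A,k}$ is killed either by $h\circ i=0$ (when $k=2$) or by $h^2=0$ (when $k\geq 3$).
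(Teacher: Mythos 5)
Your proof is correct and follows essentially the same route as the paper's: induction using the (symmetric) lemma applied to $I$, the homotopy relation to produce $(\id - i\circ p)\circ L_{\infty,k+1}(I)$, and the key vanishing $p\circ L_{\infty,k+1}(I)=0$. The paper merely asserts this last vanishing from ``$p$ is a DGLA morphism and $p\circ h=0$''; your decomposition of $I^2_{k+1}$ into blocks $I^1_{k_1}\vee I^1_{k_2}$ supplies the detail it omits, and your treatment of the final identity $I^1_k=h\circ Q^1_2\circ I^2_k$ matches the statement.
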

\begin{proof}
	We proceed by induction: assume that $I$ is an $L_\infty$-morphism up to 
	order $k$, then we have 
	\begin{align*}
		I^1_{k+1}Q_{A,k+1}^{k+1} - Q_{B,1}^1I^1_{k+1}&= 
		-Q_{B,1}^1\circ h\circ L_{\infty,{k+1}}
		+h\circ L_{\infty,{k+1}}\circ Q_{A,k+1}^{k+1}\\&
		=-Q_{B,1}^1\circ h\circ L_{\infty,{k+1}}-h\circ Q_{B,1}^{1}\circ L_{\infty,{k+1}}\\&
		=(\id-i\circ p)L_{\infty,{k+1}}.
	\end{align*}
	We used that $Q_{B,1}^1=-\D_B$ and the homotopy equation of $h$. 
	Moreover, since $p$ is a DGLA morphism and $p\circ h=0$, we have that 
	$p\circ L_{\infty,{k+1}}=0$ for $k\geq 0$. Since $I$ is an $L_\infty$-morphism 
	up to order one, i.e. a chain map, the claim is proven. 
\end{proof}

%
% section: taylor series of multivector fields
%
\section{Reduction of Multivector Fields}
\label{sec:TaylorSeriesofTpoly} 

In the following, we want to use the above language and considerations
to formulate a reduction scheme for multivector fields.  We first
introduce a new complex which of multivector fields which contains the
data of Hamiltonian actions in the case of Lie group actions
$\Phi\colon G\times M\to M$.
\begin{definition}[Equivariant Multivectors]
  The DGLA of equivariant multivector fields is given by the complex $
  T^\bullet_\liealg{g} (M)$ defined by
  \begin{align*}
    T_\liealg{g}^k (M)
    =
    \bigoplus_{2i+j=k} (\Sym^i\liealg{g}^* \tensor \Secinfty(\Anti^{j+1}TM) )^G
    =
    \bigoplus_{2i+j=k} (\Sym^i\liealg{g}^* \tensor T_{\mathrm{poly}}^j(M) )^G,
  \end{align*}
  together with the trivial differential and the following Lie bracket
  \begin{align*}
    [\alpha\tensor X, \beta\tensor Y]_{\liealg{g}}
    =
    \alpha\vee \beta\tensor[]   [X,Y]
  \end{align*}
  for any $\alpha\tensor X, \beta\tensor Y \in  T_\liealg{g}^\bullet(M)$.
\end{definition}
Here $[\argument,\argument]$ refers to the usual
Schouten--Nijenhuis bracket on $T_{\mathrm{poly}}(M)$.
Notice that invariance with respect to the group action means
invariance under the transformations $\Ad_g^*\tensor \Phi_g^*$ for all
$g\in G$.
We can equivalently interpret this complex in terms of polynomial maps
$\liealg{g}\to T_{\mathrm{poly}}^j(M)$ which are equivariant with
respect to adjoint and push-forward action. Using this point of view,
the bracket can be rewritten as
\begin{align}
  [X,Y]_{\liealg{g}}(\xi)
  =
  [X(\xi),Y(\xi)].
\end{align}
Furthermore, we introduce the canonical linear map
\begin{align}
  \lambda
  \colon 
  \liealg{g} \ni \xi \longmapsto \xi_M\in T^0_{\mathrm{poly}}M,
\end{align}	 
where $\xi_M$ denotes the fundamental vector field corresponding to
the action $\Phi$.  It is easy to see that $\lambda $ is central and
as a consequence we can turn $ T^\bullet_\liealg{g} M$ into a
\emph{curved} Lie algebra with curvature $\lambda$.
Now let $(M, \pi)$ be a Poisson manifold and denote by $\{ \argument,
\argument \}$ the corresponding Poisson bracket. Recall that an
(equivariant) momentum map for the action $\Phi$ is a map $J\colon
\liealg{g} \to \Cinfty(M)$ such that
\begin{equation}
  \label{eq:momentummap}
  \xi_M = \{ \argument, J_\xi \}
  \quad
  \text{and}
  \quad
  J_{[\xi, \eta]} 
  =
  \{ J_\xi,  J_\eta \}.
\end{equation}
An action $\Phi$ admitting a momentum map is what we called
\emph{Hamiltonian}.  In the following we prove a characterization of
Hamiltonian actions in terms of equivariant multivectors.
\begin{lemma}
  \label{lem:MChamiltonian}
  The curved Maurer--Cartan elements of $ T_\liealg{g}^\bullet (M)$
  are equivalent to pairs $(\pi,J)$, where $\pi$ is a $G$-invariant
  Poisson structure $J$ is a momentum map $J\colon \liealg{g} \to
  T^{-1}_{\mathrm{poly}}(M)$.
\end{lemma}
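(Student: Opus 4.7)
The plan is to unwind the curved Maurer--Cartan equation by first pinning down the shape of a degree-$1$ element of $T^\bullet_\liealg{g}(M)$ and then separating the quadratic identity by polynomial degree in $\xi\in\liealg{g}$.

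First I would observe that the bidegree constraint $2i+j=1$ in the definition of $T^\bullet_\liealg{g}(M)$, together with $i\geq 0$ and the fact that $T_{\mathrm{poly}}^{j}(M)=0$ for $j<-1$, leaves only the two contributions $(i,j)=(0,1)$ and $(i,j)=(1,-1)$. Hence every $\alpha\in T^1_\liealg{g}(M)$ decomposes uniquely as $\alpha=\pi-J$ with $\pi\in (T_{\mathrm{poly}}^1(M))^G$ an invariant bivector field and $J\in(\liealg{g}^*\otimes\Cinfty(M))^G$ an equivariant linear map $\liealg{g}\to\Cinfty(M)$; the minus sign is a convention chosen to be compatible with the twisted differential $[\pi-J,\argument]$ used later in the paper.

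Next I would compute the curved MC equation $\lambda+\tfrac{1}{2}[\alpha,\alpha]_\liealg{g}=0$ in the polynomial-map picture. Evaluating at $\xi$ and using $[J_\xi,J_\xi]=0$ (Schouten bracket of two functions) yields
\begin{equation*}
   \xi_M + \tfrac{1}{2}[\pi,\pi] - [\pi, J_\xi] = 0 .
\end{equation*}
Since the left-hand side is a polynomial in $\xi$, its constant and linear parts must vanish independently, producing $[\pi,\pi]=0$ (so $\pi$ is Poisson) and $\xi_M=[\pi,J_\xi]$, which is exactly the Hamiltonian half $\xi_M=\{\argument,J_\xi\}$ of \eqref{eq:momentummap}. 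For the remaining equivariance axiom $J_{[\xi,\eta]}=\{J_\xi,J_\eta\}$ I would invoke the $G$-invariance of $J$ that is already built into the definition of $T^\bullet_\liealg{g}(M)$: differentiating $J(\Ad_g\eta)=\Phi_g^*J(\eta)$ at $g=e$ gives $J_{[\xi,\eta]}=\mathcal{L}_{\xi_M}J_\eta$, and substituting $\xi_M=[\pi,J_\xi]$ together with the Schouten identity $[[\pi,f],g]=\{f,g\}$ turns the right-hand side into $\{J_\xi,J_\eta\}$. The converse is obtained by running the same computation backwards: given $(\pi,J)$ with $\pi$ a $G$-invariant Poisson bivector and $J$ an equivariant momentum map, $\alpha=\pi-J$ lies in $T^1_\liealg{g}(M)$ and the polynomial identity above, hence the MC equation, is automatically satisfied.

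The hard part is really only bookkeeping: tracking sign and shift conventions (the sign of the curvature $\lambda$, the degree of the Schouten--Nijenhuis bracket under the shift $[1]$, the sign in $\alpha=\pi-J$), checking that the polynomial separation in $\xi$ cleanly decouples Jacobi from the Hamiltonian condition, and verifying that the $G$-invariance enforced by the definition of $T^\bullet_\liealg{g}(M)$ is precisely what is needed to upgrade $\xi_M=\{\argument,J_\xi\}$ to the full equivariance $J_{[\xi,\eta]}=\{J_\xi,J_\eta\}$. Everything else reduces to standard Schouten--Nijenhuis identities.
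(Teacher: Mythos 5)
Your proposal is correct and follows essentially the same route as the paper: the paper's proof likewise decomposes $\Pi=\pi-J$ according to the bidegree constraint $2i+j=1$ and observes that the curved Maurer--Cartan equation $\lambda+\tfrac{1}{2}[\Pi,\Pi]_\liealg{g}=0$, combined with the built-in $G$-invariance, is equivalent to the two momentum map conditions. The paper leaves the computation as "easy to see," so your separation by polynomial degree in $\xi$ and the derivation of $J_{[\xi,\eta]}=\{J_\xi,J_\eta\}$ from equivariance simply supply the details it omits.
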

\begin{proof}
  The curved Maurer--Cartan equation reads
  \begin{align*}
    \lambda +\frac{1}{2}[\Pi,\Pi]_\liealg{g}=0
  \end{align*}
  for $\Pi\in T_\liealg{g}^1 (M)$. If we decompose $\Pi=\pi-J\in
  (T_{\mathrm{poly}}^1(M) )^G\oplus (\liealg{g}^* \tensor
  T_{\mathrm{poly}}^{-1}(M) )^G $, it is easy to see that the curved
  Maurer--Cartan equation together with the invariance of the elements
  is equivalent to the conditions \eqref{eq:momentummap} defining the
  momentum map.
\end{proof}
As in the Marsden-Weinstein reduction procedure, we fix a
constraint surface $C\subseteq M$, by choosing an equivariant map
$J\colon M\to \liealg{g}^*$ and setting $C=J^{-1}(\{0\})$.  Here we
always assume that $0 \in \liealg{g}^*$ is a regular value of the
momentum map, making $C$ a closed embedded submanifold of M. Note that
$G$ acts canonically of $C$, since $J$ is equivariant.  From now on we
also require the action $\Phi$ to be proper around $C$ and free on
$C$.

To implement this choice in our algebraic setting we consider from now
on the curved differential graded Lie algebra
\begin{align}
  \label{eq:curvedequitpolywithfixedJ}
	(T_\liealg{g}^\bullet (M),\lambda,-[J,\argument],[\argument,\argument]).
\end{align}	  
Note that this is in fact a curved Lie algebra since $[J,J]=0 =
[\lambda,\argument]$.  We have to move to the formal setting in order
to see why this curved Lie algebra is actually interesting. Therefore,
let us consider the curved Lie algebra
\begin{align*}
  (T_\liealg{g}^\bullet (M)[[\hbar]],\hbar\lambda,-[J,\argument],[\argument,\argument]). 
\end{align*}
Note that one advantage of the setting of formal power series is that
we immediately get a complete filtration by the $\hbar$-degrees,
i.e. by setting
\begin{equation*}
  \mathcal{F}^k T_\liealg{g}^\bullet (M)[[\hbar]] 
	= 
	\hbar^k T_\liealg{g}^\bullet (M)[[\hbar]].
\end{equation*}
In particular, if we consider formal Maurer-Cartan elements $\hbar
(\pi- J') \in \hbar T_\liealg{g}^1 (M)[[\hbar]]$, then the twisting
procedures and infinite sums from Section~\ref{sec:Preliminaries} are
all well-defined.
\begin{lemma}
  The formal curved Maurer-Cartan elements of $(T_\liealg{g}^\bullet
  (M)[[\hbar]],\hbar\lambda,-[J,\argument],[\argument,\argument])$ are
  equivalent to pairs $\hbar(\pi,J')$, where $\pi$ is a $G$-invariant
  formal Poisson structure with formal moment map $J+\hbar J'\colon
  \liealg{g} \to T^{-1}_{\mathrm{poly}}(M)[[\hbar]]$.
\end{lemma}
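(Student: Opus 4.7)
The plan is to mirror the proof of Lemma~\ref{lem:MChamiltonian}, carefully tracking the extra parameter $\hbar$ and the fixed background moment map $J$. First I would write an arbitrary formal degree-$1$ element of positive filtration as $\Xi = \hbar(\pi - J')$, where the bidegree splitting
\[
T^1_{\liealg{g}}(M)[[\hbar]] \;=\; (T^1_{\mathrm{poly}}(M))^G[[\hbar]] \;\oplus\; (\liealg{g}^* \otimes T^{-1}_{\mathrm{poly}}(M))^G[[\hbar]]
\]
forces $\pi$ to lie in the first summand and $J'$ in the second.

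Substituting this $\Xi$ into the curved Maurer--Cartan equation
\[
\hbar\lambda \;-\; [J,\Xi] \;+\; \tfrac{1}{2}[\Xi,\Xi] \;=\; 0
\]
and expanding, using graded symmetry of the Schouten bracket on two total-degree-$1$ elements (so $[\pi,J'] = [J',\pi]$), gives
\[
\hbar\lambda - \hbar[J,\pi] + \hbar[J,J'] + \tfrac{\hbar^2}{2}[\pi,\pi] - \hbar^2[\pi,J'] + \tfrac{\hbar^2}{2}[J',J'] \;=\; 0.
\]
I would then decompose this identity according to the bidegree on $(\Sym^i \liealg{g}^* \otimes T^j_{\mathrm{poly}}(M))^G$. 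The $(0,2)$-component yields $[\pi,\pi]=0$, so $\pi$ is a $G$-invariant formal Poisson bivector. Writing $\tilde{J} := J + \hbar J'$, the $(2,-2)$-component collects (using $[J,J]=0$) to $\tfrac{1}{2}[\tilde{J},\tilde{J}]$, which vanishes automatically since the Schouten bracket of two (formal) functions is zero. The $(1,0)$-component, after dividing by $\hbar$ and again using graded symmetry, becomes $\lambda = [\pi,\tilde{J}]$, i.e.\ $\xi_M = \{\argument,\tilde{J}(\xi)\}_\pi$ (up to a convention-dependent sign), which is precisely the Hamiltonian condition for $\tilde{J}$ with respect to $\pi$.

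The second momentum-map identity $\{\tilde{J}(\xi),\tilde{J}(\eta)\}_\pi = \tilde{J}([\xi,\eta])$ then follows, exactly as in the un-formal case of Lemma~\ref{lem:MChamiltonian}, by combining the Hamiltonian condition just derived with the $G$-equivariance of $\tilde{J}$ that is built into the definition of $T^\bullet_\liealg{g}(M)[[\hbar]]$ via $G$-invariants. Conversely, given a pair $(\pi,\tilde{J})$ with $\pi$ a $G$-invariant formal Poisson structure and $\tilde{J}=J+\hbar J'$ a compatible formal moment map, reversing the same calculation shows that $\Xi = \hbar(\pi - J')$ solves the curved Maurer--Cartan equation.

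I do not anticipate a genuine obstacle: the only points requiring care are the sign bookkeeping for the Schouten bracket on odd-total-degree elements and the observation that, after dividing by $\hbar$, the rigid part $J$ combines with the perturbation $\hbar J'$ into the full formal moment map $\tilde{J}$; the rest is a direct adaptation of the proof of Lemma~\ref{lem:MChamiltonian}.
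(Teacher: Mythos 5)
Your proposal is correct and follows essentially the same route as the paper, whose proof is simply the one-line reduction to Lemma~\ref{lem:MChamiltonian} by counting $\hbar$-degrees; you have just written out that degree count explicitly (the bidegree decomposition, the automatic vanishing of the $\Sym^2\liealg{g}^*$-component, and the derivation of the second momentum-map identity from equivariance are all exactly what that reduction amounts to). No gaps.
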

\begin{proof}
  The proof follows directly by Lemma~\ref{lem:MChamiltonian} by
  counting $\hbar$-degrees.
\end{proof}
The rest of this paper is devoted to the construction of a curved
$L_\infty$-morphism
\begin{align*}
	\mathrm{T}_\red
	\colon 
	(T_\liealg{g} (M)[[\hbar]],\hbar\lambda,-[J,\argument],[\argument,\argument])
	\longrightarrow
	(\Tpoly(M_\red)[[\hbar]],0,0,[\argument,\argument])
\end{align*}
with $M_\red:= C/G$. This morphism is frequently referred to as 
\emph{reduction morphism}.

\subsection{Taylor Series Expansion around $C$}
\label{sec:TaylorSeriesofTpoly1}

The main goal of this section is the study of a partial Taylor series
expansion of the multivector field on M around $C$.  Let us assume $M
= C\times \liealg{g}^*$. This is not a strong assumption as we know
from \cite[Lemma~3]{bordemann.herbig.waldmann:2000a} that, if $G$ acts
properly on an open neighbourhood of $C$ we can always find an
$G$-invariant open neighbourhood $M_\nice \subseteq M$ of $C$, such
that there exists a $\group{G}$-equivariant diffeomorphism $M_\nice
\cong U_\nice \subseteq C\times \liealg{g}^*$. Here the Lie group $G$
acts on $C \times \liealg{g}^*$ as
\begin{align*}
	\Phi_g
	= \Phi^C_g\times \Ad^*_{g^{-1}},
\end{align*}
where $\Phi^C$ is the induced action on $C$.  Note that in this
setting the momentum map on $U_\nice$ is simply given by the
projection to $\liealg{g}^*$.
The idea of a Taylor expansion uses the fact that we have the
isomorphism
\begin{align*}
  \Tpoly^k(C\times \liealg{g}^*) \cong
  \bigoplus_{i+j=k}\Cinfty(C\times\liealg{g}^*)
  \tensor_{\Cinfty(C)}(\Anti^i\liealg{g}^*\tensor\Tpoly^j(C)).
\end{align*} 
First, we define 
\begin{align*}
  T_{\liealg{g}^*}\colon \Cinfty(C\times\liealg{g}^*) \ni f \longmapsto
  \sum_{I \in \mathbb{N}_0^n}^\infty \frac{1}{I!}  e_I \otimes
  \iota^*\frac{\del}{\del \alpha_I} f \in
  \prod_i(\Sym^i\liealg{g}\tensor\Cinfty(C)),
\end{align*}
where $\alpha_i e^i$ are coordinates on $\liealg{g}^*$ and $\iota^*$
the restriction to $C$.

\begin{lemma}\label{Lem: InvTayl}
  The map $T_{\liealg{g}^*}$ is equivariant, i.e.
  \begin{align}
    T_{\liealg{g}^*}\circ \Phi_{g,*}
    =
    (\Ad_g\tensor \Phi^C_{g,*})\circ T_{\liealg{g}^*}.
  \end{align}
\end{lemma}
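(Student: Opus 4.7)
The approach is a direct chain-rule computation, organized to exploit the product structure $\Phi_g=\Phi^C_g\times\Ad^*_{g^{-1}}$. Since the two factors act on the two factors of $C\times\liealg{g}^*$ independently, I would split the verification: the $\Phi^C_g$-factor affects only the $C$-variable and commutes with $\iota^*$ and with $\partial/\partial\alpha_I$, producing the $\Phi^C_{g,*}$ appearing on the right. The real content is to show that the $\liealg{g}^*$-derivatives at $\alpha=0$ transform under the $\Ad^*_{g^{-1}}$-factor by the induced action $\Sym^i\Ad_g$ on $\Sym^i\liealg{g}$.

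For this step I would use that $\Ad^*_{g^{-1}}\colon\liealg{g}^*\to\liealg{g}^*$ is linear and fixes the origin. If $M(g)_i^{\ j}$ denotes the matrix of $\Ad_g$ in the basis $\{e_i\}$, a short calculation shows that in the dual coordinates $\alpha_i$ on $\liealg{g}^*$ one has $(\Ad^*_{g^{-1}}\alpha)_i = M(g)_i^{\ k}\alpha_k$. A single application of the chain rule therefore gives
\begin{equation*}
  \iota^*\tfrac{\partial}{\partial\alpha_i}\bigl(f\circ\Phi_g\bigr)
  =
  \sum_{k} M(g)_k^{\ i}\;\Phi^C_{g,*}\!\left(\iota^*\tfrac{\partial}{\partial\alpha_k}f\right),
\end{equation*}
and iterating (and symmetrizing) gives, for a multi-index $I$ of length $i$,
\begin{equation*}
  \iota^*\tfrac{\partial}{\partial\alpha_I}\bigl(f\circ\Phi_g\bigr)
  =
  \sum_{J}\bigl(\Sym^{i}M(g)\bigr)_{J}^{\ I}\;\Phi^C_{g,*}\!\left(\iota^*\tfrac{\partial}{\partial\alpha_J}f\right),
\end{equation*}
the coefficients being exactly the matrix coefficients of the symmetric power $\Sym^i\Ad_g$ in the basis $\{e_I\}$ of $\Sym^i\liealg{g}$.

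Finally I would pair these derivatives with $\tfrac{1}{I!}e_I$ and sum: because $\Ad_g(e_k)=M(g)_k^{\ i}e_i$, a reindexing identifies $\sum_{I}\tfrac{1}{I!}e_I\otimes\iota^*\tfrac{\partial}{\partial\alpha_I}(f\circ\Phi_g)$ with $\sum_{J}\tfrac{1}{J!}(\Sym^{|J|}\Ad_g)(e_J)\otimes\Phi^C_{g,*}(\iota^*\tfrac{\partial}{\partial\alpha_J}f)$, which is precisely $(\Ad_g\otimes\Phi^C_{g,*})T_{\liealg{g}^*}(f)$. The only mildly delicate point is the combinatorial bookkeeping in passing from the ordered multi-index $\partial/\partial\alpha_I$ to the symmetric basis element $e_I\in\Sym^{|I|}\liealg{g}$, which amounts to recognizing that the chain-rule coefficients for iterated derivatives of a linear substitution are exactly the structure constants of $\Sym^{|I|}\Ad_g$; this is where I expect to be most careful, but it is ultimately routine since $\Ad^*_{g^{-1}}$ is linear and hence there are no higher-order chain-rule contributions.
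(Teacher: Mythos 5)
Your proposal is correct and follows essentially the same route as the paper's proof: both reduce everything to the single chain-rule identity for first-order $\alpha$-derivatives coming from the linearity of $\Ad^*_{g^{-1}}$ (which is where the product structure of $\Phi_g$ and the absence of higher-order chain-rule terms enter), and then transfer the resulting matrix coefficients of $\Ad_g$ onto the symmetric powers of $\liealg{g}$ by reindexing the sum over multi-indices. The paper merely states this last bookkeeping step in one line ("shifting the components to the symmetric powers"), whereas you spell it out via $\Sym^{|I|}\Ad_g$; the content is identical.
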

\begin{proof}
  We just observe that
  \begin{align*}
    \Phi_{g}^*\circ \frac{\partial}{\partial \alpha_i}
    =
    (\Ad_{g^{-1}})^i_j \cdot (\Phi^C_g)^*\circ \frac{\partial}{\partial \alpha_j}
  \end{align*}
  for $\Ad_g e_i=(\Ad_g)_i^j e_j$. Hence we have 
  \begin{align*}
    T_{\liealg{g}^*}(\Phi_{g,*}f)
    =
    \sum_{I \in \mathbb{N}_0^n}^\infty \frac{1}{I!}
    e_I \otimes \iota^*\frac{\del}{\del \alpha_I} \Phi_{g,*}f
    =
    (\Ad_g\tensor \Phi^C_{g,*})\circ T_{\liealg{g}^*}f
  \end{align*}
  by shifting the components $(\Ad_{g^{-1}})^i_j = (\Ad_g)^j_i$ to the 
  symmetric powers of $\liealg{g}$.
\end{proof}
\begin{remark}
  It is now clear that this map can be restricted to invariant
  functions in order to obtain invariant elements in
  $\prod_i(\Sym^i\liealg{g}\tensor\Cinfty(C))$. Moreover, with a
  slight adaption of the proof of the Borel-Lemma, see
  e.g. \cite[Theorem~1.3]{moerdijk.reyes:1991a}, one can show that the map
  $T_{\liealg{g}^*}$ is surjective. The more remarkable fact is that
  the properness of the action ensures that the map
  \begin{align*}
    T_{\liealg{g}^*}\colon \Cinfty(M\times\liealg{g}^*)^G
    \ni f
    \mapsto 
    \sum_{I \in \mathbb{N}_0^n}^\infty \frac{1}{I!}
    e_I \otimes \iota^*\frac{\del}{\del \alpha_I} f
    \in \prod_i(\Sym^i\liealg{g}\tensor\Cinfty(C))^G
  \end{align*}
  is surjective. We  omit this proof as we do not use it here and it
  is just an adaption of the corresponding
  statement in \cite{Miaskiwskyi:2019a:pre}.
\end{remark}
We extend this map to $\Tpoly^\bullet(C\times \liealg{g}^*)$ via 
\begin{align*}
  T_{\liealg{g}^*}\colon
  \Tpoly^k(C\times \liealg{g}^*)
  \ni (f\tensor\xi\tensor X)
  \mapsto
  \sum_{I \in \mathbb{N}_0^n}^\infty \frac{1}{I!}
  e_I \otimes\xi \tensor\iota^*\frac{\del}{\del \alpha_I}f\cdot X
  \in \prod_i(\Sym^i\liealg{g}\tensor \Anti\liealg{g}^*\tensor\Tpoly(C))
\end{align*}
and using Lemma \ref{Lem: InvTayl}, we see that also this map can be
restricted to invariant multivector fields:

\begin{definition}[Taylor Expansion around $C$]
  The map 
  \begin{equation}
    \label{eq:TaylorSeries}
    T_{\liealg{g}^*} \colon
    (\Sym\liealg{g}^*\otimes\Tpoly(C\times\liealg{g}^*))^\group{G}
    \longrightarrow 
    T_\Tay (C\times \liealg{g}^*)
    \coloneqq 
    (\Sym\liealg{g}^* \otimes 
    \prod_{i=0}^\infty (\Sym^i \liealg{g} \otimes \Anti \liealg{g}^* 
    \otimes \Tpoly(C)))^\group{G}
  \end{equation}
is called Taylor expansion around $C$.
\end{definition}
Having in mind that the vector space $\prod_{i=0}^\infty (\Sym^i
\liealg{g} \otimes \Anti \liealg{g}^* \otimes \Tpoly(C)))^\group{G}$
is just consisting of Taylor expansions, it is not surprising that it
also inherits the structure of a DGLA: for $P,Q\in \prod_i \Sym^i
\liealg{g}$ and $\xi,\eta\in \liealg{g}^*$, the brackets are given by
\begin{align*}
  [P,Q]
  & =
  0,
  \quad \quad 
        [P\tensor\xi,Q]
        =
        P\vee \inss(\xi)Q,
   \\ 
        [P\tensor\xi,Q\tensor\eta]
 & = 
  P\vee\inss(\xi)Q\tensor \eta-Q\vee\inss(\eta)P\tensor \xi,
\end{align*} 
and they are extended as a Gerstenhaber bracket with respect to the
graded commutative product
\begin{align*}
	(P\tensor\xi)\cdot (Q\tensor\eta)
	:=
	P\vee Q\tensor\xi\wedge\eta.
\end{align*}
We combine it with the usual DGLA structure on $\Tpoly(C)$ and extend
it as in the case of $T_\liealg{g}^\bullet(M)$ trivially to all of
$T_\Tay(C\times \liealg{g}^*)$.  Summarizing, we have a DGLA structure
on the Taylor expansion around $C$ with zero differential.
\begin{lemma}
  The Taylor expansion
  \begin{align}
    T_{\liealg{g}^*}\colon 
    T_\liealg{g}(M)
    \longrightarrow 
    T_\Tay(C\times \liealg{g}^*)
  \end{align}  
  is a DGLA morphism. 
\end{lemma}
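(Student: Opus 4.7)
The plan is to reduce the claim to bracket compatibility at the level of multivector fields on $C\times\liealg{g}^*$ and formal multivector fields around $C$. Since both DGLAs carry the trivial differential, nothing has to be checked on the differential side, and since $T_{\liealg{g}^*}$ acts as the identity on the outer $\Sym\liealg{g}^*$-factor (which enters only as a parameter space on which the bracket is extended trivially), it suffices to treat the inner multivector part. Landing in the invariants is automatic from Lemma~\ref{Lem: InvTayl}. Concretely, I would verify that the induced map
\[
  T_{\liealg{g}^*}\colon \Tpoly(C\times\liealg{g}^*) \longrightarrow \prod_{i=0}^\infty(\Sym^i\liealg{g}\tensor\Anti\liealg{g}^*\tensor\Tpoly(C))
\]
is a morphism of Gerstenhaber algebras, where the target is recognized as the Gerstenhaber algebra of \emph{formal} multivector fields on $\liealg{g}^*$ around $0$ (in which $\Sym^\bullet\liealg{g}$ plays the role of formal functions on $\liealg{g}^*$ and $\Anti^\bullet\liealg{g}^*$ the role of constant multivector fields) tensored with $\Tpoly(C)$.

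The proof then proceeds in two stages. First, I would check that $T_{\liealg{g}^*}$ is a morphism of graded commutative algebras with respect to the wedge product: the $\Anti\liealg{g}^*\tensor\Tpoly(C)$ part is tensored linearly and multiplicativity is trivial there, while on the functions the identity $T_{\liealg{g}^*}(fg) = T_{\liealg{g}^*}(f)\cdot T_{\liealg{g}^*}(g)$ is the classical Leibniz formula $\partial^I_\alpha(fg)=\sum_{J+K=I}\binom{I}{J}\partial^J_\alpha f\cdot \partial^K_\alpha g$, matched with the multiplication $e_J\cdot e_K=\frac{(J+K)!}{J!K!}e_{J+K}$ in $\Sym\liealg{g}$. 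Second, I would invoke that a morphism of graded commutative algebras between Gerstenhaber algebras is a Gerstenhaber morphism as soon as it intertwines brackets on a set of generators, by the biderivation property of the Schouten-Nijenhuis/Gerstenhaber bracket. Thus I only need to check bracket compatibility on pairs (function, function), (function, vector field) and (vector field, vector field).

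The first case is trivial. Splitting a vector field on $C\times\liealg{g}^*$ into its horizontal part in $\Tpoly^0(C)$ and its vertical constant part $\partial/\partial\alpha_i$, the second case reduces to two identities: Taylor expansion obviously commutes with $C$-derivatives of functions, and the identity $T_{\liealg{g}^*}(\partial_{\alpha_i}f)=\inss(e^i)T_{\liealg{g}^*}(f)$ is a one-line computation from the definition of $T_{\liealg{g}^*}$ by index shifting, which matches the formula $[P\tensor\xi,Q]=P\vee\inss(\xi)Q$ for the target bracket. The third case combines the Schouten bracket on $C$ (preserved since $T_{\liealg{g}^*}$ acts trivially on that factor) with the identically zero bracket between two constant vector fields on $\liealg{g}^*$, and the mixed terms reduce again to the previous case through the biderivation rule.

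The main obstacle is not conceptual but notational: keeping track of signs and of the interaction of the $\Anti\liealg{g}^*$ and $\Sym\liealg{g}$ factors in the target, so that the formula $[P\tensor\xi,Q\tensor\eta]=P\vee\inss(\xi)Q\tensor\eta-Q\vee\inss(\eta)P\tensor\xi$ really reproduces, term by term, the Taylor expansion of the Schouten bracket of two multivector fields on $C\times\liealg{g}^*$. Once multiplicativity and the biderivation reduction are in place, this is a direct unfolding of definitions.
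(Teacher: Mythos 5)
Your proposal is correct and takes essentially the same route as the paper, whose proof consists of the single sentence ``This is an easy verification on generators'': you have simply spelled out that verification, reducing bracket compatibility via the biderivation property of the Schouten bracket to the three generator cases and to the key identity $T_{\liealg{g}^*}(\partial_{\alpha_i}f)=\inss(e^i)T_{\liealg{g}^*}(f)$, which indeed matches $[P\tensor\xi,Q]=P\vee\inss(\xi)Q$. One small normalization slip: with the paper's convention $e_I=e_1^{\vee I_1}\vee\cdots\vee e_n^{\vee I_n}$ one has $e_J\vee e_K=e_{J+K}$ \emph{without} the binomial factor you wrote, and it is precisely this together with the prefactors $1/I!$ and the multinomial Leibniz rule that makes the multiplicativity computation close.
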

\begin{proof}
  This is an easy verification on generators. 
\end{proof}
As a next step we want to include the curvature $\lambda\in T_\liealg{g}^2(M)$
from Section~\ref{sec:TaylorSeriesofTpoly}. Recall that
\begin{align*}
	\lambda
	=
	e^i\tensor (e_i)_M\in T_\liealg{g}^2(M)=(\liealg{g}^*\tensor \Tpoly^{0}(M))^G. 
\end{align*}
Using our assumption that $M=C\times \liealg{g}^*$ and that $G$ acts as
the product of the action on $C$ and the coadjoint action, we see that
\begin{align}
  \label{eq:fundamentalVectorFieldsinCtimesg}
	(e_i)_M
	=
	(e_i)_C+\alpha_k f_{ji}^k\frac{\partial}{\partial \alpha_j},
\end{align}		
where $(e_i)_C$ denotes the fundamental vector field of the action on $C$
and where $f_{ji}^k$ are the structure constants of $\liealg{g}$. 
This means in particular that 
\begin{align*}
	 T_{\liealg{g}^*}(\lambda)
	 =
	 e^i\tensor 1\tensor 1 \tensor(e_i)_C 
	 + 
	 f_{ji}^j e^i\tensor e_k\tensor e^j\tensor 1 \in 
	 T_\Tay(C\times \liealg{g}^*).
\end{align*}
With a slight abuse of notation we write $\lambda$ instead of
$T_{\liealg{g}^*}(\lambda)$.  The same argument leads to the
observation that
\begin{align*}
  T_{\liealg{g}^*}(J)=e^i\tensor e_i\tensor 1\tensor 1,
\end{align*}
where we also write $J$ instead of $T_{\liealg{g}^*}(J)$ in the sequel.

\begin{corollary}
  The map 
  \begin{align}
    T_{\liealg{g}^*}\colon 
    (T_\liealg{g}(M),\lambda,-[J,\argument],[\argument,\argument])
    \longrightarrow 
    (T_\Tay(C\times \liealg{g}^*),\lambda,-[J,\argument],[\argument,\argument])
  \end{align}
  is a morphism of curved Lie algebras. 
\end{corollary}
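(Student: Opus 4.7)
My plan is to verify the three conditions that define a morphism of curved Lie algebras between the two objects in the statement: preservation of the bracket, preservation of the differential $-[J,\argument]$, and preservation of the curvature $\lambda$. The preceding Lemma already establishes that $T_{\liealg{g}^*}$ is a DGLA morphism with respect to the zero differentials and the given Gerstenhaber-type brackets, so bracket preservation is handed to me for free; only the twist-related data require attention.

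For the curvature, I would invoke the explicit calculation carried out immediately above the corollary. Starting from $\lambda = e^i \otimes (e_i)_M$ and applying $T_{\liealg{g}^*}$ together with the decomposition \eqref{eq:fundamentalVectorFieldsinCtimesg} of the fundamental vector fields on the product $C \times \liealg{g}^*$ produces a specific element of $T_\Tay(C\times\liealg{g}^*)$ which, under the stated abuse of notation, is by definition the curvature $\lambda$ on the target side. So $T_{\liealg{g}^*}(\lambda) = \lambda$ holds essentially tautologically.

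For the differential, the analogous computation gives $T_{\liealg{g}^*}(J) = e^i \otimes e_i \otimes 1 \otimes 1$, identified (again by abuse) with the element $J$ of the target DGLA. Combining this with the bracket preservation from the Lemma immediately yields, for all $X \in T_\liealg{g}(M)$,
$$T_{\liealg{g}^*}\bigl(-[J,X]\bigr) \;=\; -[T_{\liealg{g}^*}(J),\, T_{\liealg{g}^*}(X)] \;=\; -[J,\, T_{\liealg{g}^*}(X)],$$
which is precisely compatibility with the twisted differentials. Equivalently, one may appeal to functoriality of the Maurer--Cartan twist: the Lemma supplies a morphism of the untwisted curved Lie algebras $(\cdot,\lambda,0,[\argument,\argument])$ on each side, and since the element $-J$ on the source is mapped to $-J$ on the target, twisting by these elements on either side automatically transports this to the morphism claimed in the corollary. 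I do not anticipate any genuine obstacle here: the corollary is essentially a bookkeeping consequence of the preceding Lemma together with the two explicit evaluations $T_{\liealg{g}^*}(\lambda) = \lambda$ and $T_{\liealg{g}^*}(J) = J$, both of which have already been displayed in the excerpt.
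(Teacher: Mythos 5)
Your proposal is correct and matches the paper's (implicit) argument exactly: the corollary is stated without proof precisely because it follows from the preceding Lemma (bracket preservation) together with the two displayed evaluations $T_{\liealg{g}^*}(\lambda)=\lambda$ and $T_{\liealg{g}^*}(J)=J$, with compatibility of the inner differentials $-[J,\argument]$ then being an immediate consequence of bracket preservation. Nothing further is needed.
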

One main advantage of the Taylor expansion $T_\Tay(C\times
\liealg{g}^*)$ consists in the fact that we have a canonical element
\begin{align*}
	\pi_\KKS
	:= 
	1\tensor\left(\frac{1}{2}f^k_{ij} e_k \otimes e^i\wedge e^j\tensor 1  
	-
	1\tensor e^i\tensor(e_i)_C\right),
\end{align*}
which is not available in $T_\liealg{g}(M)$. Note that $\pi_\KKS$ encodes the action 
on $C$ and the Lie algebra structure on $\liealg{g}$. 
\begin{remark}[Action Lie algebroid]
  \label{remark:actionalgebroid}
	The bundle $C \times \liealg{g} \rightarrow C$ can be equipped with the 
	structure of a Lie algebroid since $\liealg{g}$ acts on $C$ by the 
	fundamental vector fields. The bracket of this \emph{action Lie algebroid} 
	is given by 
	\begin{equation}
	  \label{eq:BracketActionAlg}
	  [\xi, \eta]_{C\times\liealg{g}}(p) 
		=
		[\xi(p),\eta(p)] - (\Lie_{\xi_C}\eta)(p) + 
		(\Lie_{\eta_C}\xi)(p)
	\end{equation}
	for $\xi,\eta \in \Cinfty(C,\liealg{g})$. The anchor is given by 
	$\rho(p,\xi) = -\xi_C\at{p}$. In particular, one can check that 
	$\pi_\KKS$ is the negative of the linear Poisson structure on its dual 
	$C \times \liealg{g}^*$ in the convention of \cite{neumaier.waldmann:2009a}.
\end{remark}
The canonical $\pi_\KKS$ is of big importance since it is part of some
kind of normal form for every invariant Poisson structure on $C\times
\liealg{g}^*$ with moment map $J$. In the Taylor expansion this is
becomes more clear in the following lemma:

\begin{lemma}
  \label{lemma:TaylorexpansionPoissonnormalform}
  Let $\pi \in \big(\prod_{i=0}^\infty(\Sym^i \liealg{g} \otimes \Anti
  \liealg{g}^* \otimes \Tpoly(C))\big)^\group{G} \subseteq
  T_\Tay(C\times \liealg{g}^*)$ be a curved Maurer--Cartan element,
  then
  \begin{equation}
    \pi
    =
    \pi_\KKS+ \pi_C
  \end{equation}
  with $\pi_C
  \in (\prod_{i=0}^\infty\Sym^i\liealg{g} \otimes
  \Tpoly^1(C))^\group{G}$.
\end{lemma}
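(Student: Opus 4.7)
My plan is to decompose the curved Maurer--Cartan equation $\lambda - [J,\pi] + \tfrac{1}{2}[\pi,\pi] = 0$ along the natural bi-grading on $T_\Tay(C\times\liealg{g}^*)$ coming from the outer $\Sym\liealg{g}^*$-factor (the $s$-degree) and the inner $\Anti\liealg{g}^*$-factor (the $a$-degree), and then to read off the pieces of $\pi$ equation by equation. Since $\pi$ has total degree $1$ and lives in the $s=0$ subspace, it decomposes uniquely as $\pi = \pi^{(0)} + \pi^{(1)} + \pi^{(2)}$ with
\begin{equation*}
  \pi^{(a)} \in \Bigl(\prod_i \Sym^i\liealg{g} \otimes \Anti^a\liealg{g}^* \otimes \Tpoly^{1-a}(C)\Bigr)^{\group{G}};
\end{equation*}
we want to show that $\pi^{(1)}$ and $\pi^{(2)}$ are forced to equal the corresponding components of $\pi_\KKS$, while $\pi^{(0)}$ is free and becomes the desired $\pi_C$.

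First, I project the equation onto outer $s$-degrees: $\lambda$ and $[J,\pi]$ have $s=1$ while $[\pi,\pi]$ has $s=0$, so the equation splits into $\lambda = [J,\pi]$ and $[\pi,\pi]=0$ (the latter will not be needed for this lemma). Next, I compute $[J,-]$ explicitly. Writing $J = e^i \otimes e_i \otimes 1 \otimes 1$, the inner factor is the Taylor expansion of the function $\alpha_i$ on $\liealg{g}^*$, and a direct application of the bracket rules of the preceding subsection (together with the fact that $J$ carries no $\Tpoly(C)$-dependence) gives a Koszul-type formula
\begin{equation*}
  [J,\pi^{(a)}] = -\, e^i \otimes \iota_{e_i}(\pi^{(a)}),
\end{equation*}
where $\iota_{e_i}$ contracts the $\Anti\liealg{g}^*$-factor by $e_i$. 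This lowers the inner $a$-degree by one, so the equation $\lambda = [J,\pi]$ further decouples into
\begin{equation*}
  \lambda_{a=0} = -e^i \otimes \iota_{e_i}(\pi^{(1)}), \qquad \lambda_{a=1} = -e^i \otimes \iota_{e_i}(\pi^{(2)}),
\end{equation*}
the $a=-1$ piece being the tautology $0=0$, so that $\pi^{(0)}$ is left untouched.

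Then I solve these using the explicit Taylor-expanded forms
\begin{equation*}
  \lambda_{a=0} = e^i \otimes 1 \otimes 1 \otimes (e_i)_C, \qquad \lambda_{a=1} = f^k_{ji}\, e^i \otimes e_k \otimes e^j \otimes 1,
\end{equation*}
by comparing coefficients of each $e^i$ on the outer $\Sym^1\liealg{g}^*$-factor. The resulting relations $\iota_{e_i}(\pi^{(a)}) = \cdots$, valid for every $i$, combine with the fact that the family $\{\iota_{e_i}\}_i$ is collectively injective on $\Anti^a\liealg{g}^*$-valued elements for $a\geq 1$ to pin down $\pi^{(1)}$ and $\pi^{(2)}$ completely: the right-hand sides sit in the $\Sym^0\liealg{g}$ (resp.\ $\Sym^1\liealg{g}$) component with $1\in\Cinfty(C)$ as their $C$-dependence, so every other Taylor coefficient of $\pi^{(1)}$ and $\pi^{(2)}$ must vanish. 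What remains is exactly $\pi^{(1)} = -e^k \otimes (e_k)_C$ and $\pi^{(2)} = \tfrac{1}{2} f^k_{ij}\, e_k \otimes e^i \wedge e^j \otimes 1$, i.e.\ $\pi_\KKS$. Setting $\pi_C := \pi^{(0)}$ yields the decomposition $\pi = \pi_\KKS + \pi_C$, and $\group{G}$-invariance of $\pi_C$ is automatic from that of $\pi$ and $\pi_\KKS$.

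The main technical obstacle is the explicit computation of $[J,-]$ in the Gerstenhaber structure on $T_\Tay$, which fuses the explicit bracket on $\Sym\liealg{g}\otimes\Anti\liealg{g}^*$ with the Schouten bracket on $\Tpoly(C)$; once the Koszul-type formula above is established, the rest of the argument is elementary degree bookkeeping and the injectivity of the family of contractions, and in particular the full condition $[\pi,\pi]=0$ is not used.
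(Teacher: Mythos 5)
Your proposal is correct and follows essentially the same route as the paper: both use only the $\Sym\liealg{g}^*$-degree-one part of the curved Maurer--Cartan equation, $\lambda = [J,\pi]$, to force the components of $\pi$ with nonzero $\Anti\liealg{g}^*$-degree to coincide with those of $\pi_\KKS$, leaving the purely $\Tpoly(C)$-valued part free as $\pi_C$. The only difference is presentational: the paper phrases this geometrically on $C\times\liealg{g}^*$ as the momentum-map condition $\xi_M = -\ins(\D J(\xi))\pi$ and splits the bivector by the number of $\tfrac{\del}{\del\alpha_i}$-factors before translating to the Taylor expansion, whereas you carry out the equivalent contraction argument intrinsically in the gradings of $T_\Tay(C\times\liealg{g}^*)$.
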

\begin{proof}
  By \eqref{eq:fundamentalVectorFieldsinCtimesg} we have for
  $\xi\in\liealg{g}$, $c\in C$ and $\alpha = \alpha_i e^i\in
  \liealg{g}^*$
  \begin{equation*}
    \xi_M \at{(c,\alpha)}
    =
    - (\ins(\D J(\xi))\pi)\at{(c,\alpha)}
    =
    \xi_C\at{c} + \xi_{\liealg{g}^*}\at{\alpha}
    =
    \xi_C\at{c} - f^i_{jl}\ins(e_i)\alpha \; e^j(\xi) \frac{\del}{\del \alpha_l}.
  \end{equation*}
  This implies directly	
  \begin{equation*}
    \pi
    =
    \pi_C + (e_i)_C \wedge \frac{\del}{\del \alpha_i} + 
    \frac{1}{2} \alpha_k f^k_{ij}\frac{\del}{\del \alpha_i}\wedge 
    \frac{\del}{\del \alpha_j},
  \end{equation*}
  where $\pi_C \in \Secinfty(M,\Anti^2 TC)$ is tangent to $C$, but can
  possibly depend on all of $M = C \times \liealg{g}^*$. In the
  Taylor expansion $\frac{\del}{\del \alpha_l}$ corresponds to
  $\ins(e^l)$ and the lemma is shown.
\end{proof}
Comparing now the terms in $[\pi,\pi]=0$ with same $\liealg{g}^*$ and
$C$ degrees gives hints concerning the coefficient function of $\pi_C$
that can also depend on $\liealg{g}^*$.  In particular, the terms in
$\Secinfty(\Anti^3TC)$ are given by
\begin{equation}
  \label{eq:EquforPiC}
  [\pi_C,\pi_C] + 2 (e_i)_C \wedge \left[\frac{\del}{\del \alpha_i}, \pi_C\right]
	=
	0.
\end{equation}
To conclude this section, we define for later use the operator
\begin{equation}
  \label{eq:DifferentialDel}
	\del 
	:=
	\id \otimes \inss(e^i) \otimes \id \otimes (e_i)_C\wedge.
\end{equation}
Note that we assume the Koszul sign rule, i.e. applying $\del$ to $\xi
\otimes P \otimes \alpha \otimes X$ we get a sign
$(-1)^{\abs{\alpha}}$.  We directly see that $\del^2 = 0$ and Equation~\eqref{eq:EquforPiC} can be written as
\begin{equation}
  \label{eq:MCforPiC}
	\frac{1}{2} [\pi_C,\pi_C] + (e_i)_C \wedge \inss(e^i)\pi_C
	=
	\frac{1}{2} [\pi_C,\pi_C] + \del \pi_C
	=
	0.
\end{equation}
\subsection{The Cartan Model of Multivector Fields}

In the case of symplectic manifolds, it has been shown in \cite{reichert:2017a} 
that quantization and reduction commute by exploiting the following diagram  
\begin{align*}
	((\Sym\liealg{g}^*\tensor\Omega(M))^G,\D_{\liealg{g}})
	\overset{\iota^*}{\longrightarrow}
	((\Sym\liealg{g}^*\tensor\Omega(C))^G,\D_{\liealg{g}})
	\overset{p^*}{\longleftarrow}
	(\Omega^\bullet(M_\red),\D)
\end{align*}
for $M\overset{\iota}{\longleftarrow} C \overset{p}\to M_\red $. Here $p^*$ is a 
quasi-isomorphism and 
$(\Sym\liealg{g}^*\tensor\Omega(C))^G$ is the so-called Cartan model 
for equivariant de Rham cohomology \cite{guillemin.sternberg:1999a}. We aim 
to generalize this result to the setting of Poisson manifolds by using the above observation 
as a guideline. 
For this reason we introduce our notion for the
\emph{Cartan Model of equivariant multivector fields} and compute
its relation with $T_\poly(M_\red)$ and with the Taylor expansion of
multivector fields around $C$ from the previous section.
We start with the following observation:
\begin{proposition}
  \label{prop:cohomologytrivialbracketj}
    The cohomology of the DGLA
    $(T_\Tay(C\times\liealg{g}^*),-[J,\argument],[\argument,\argument])$
    is given by the Lie algebra $\left((\prod_{i=0}^\infty
    (\Sym^i\liealg{g} \otimes\Tpoly(C)))^\group{G},
    [\argument,\argument]\right)$. Therefore, the canonical inclusion 
		\begin{align}
	  \iota
	  \colon 
	  \left(\left(\prod_{i=0}^\infty(\Sym^i\liealg{g} \otimes\Tpoly(C))\right)^\group{G},0, 
		[\argument,\argument]\right)
	  \longrightarrow
	  (T_\Tay(C\times\liealg{g}^*),[-J,\argument],[\argument,\argument]) 
	\end{align}
  becomes a quasi-isomorphism of DGLA's.
\end{proposition}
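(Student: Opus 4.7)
The strategy is to recognize $-[J,\argument]$ as a Koszul-type differential acting only on the $\Sym\liealg{g}^* \otimes \Anti\liealg{g}^*$ tensor factors of $T_\Tay(C\times\liealg{g}^*)$, then contract this complex by an explicit $G$-equivariant homotopy.

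\textbf{Step 1: Explicit form of the differential.} Since $J = e^i\otimes e_i\otimes 1\otimes 1$ is concentrated in the $\Sym\liealg{g}^*\otimes \prod\Sym^i\liealg{g}$ factors, computing $-[J,\argument]$ on an element $\alpha\otimes P\otimes\eta\otimes Y$ only uses the Gerstenhaber piece in the Taylor factor (the $\Tpoly(C)$-Schouten bracket vanishes because $J$ has trivial $C$-component). Unwinding the bracket formulas $[P,Q]=0$ and $[P\otimes\xi,Q]=P\vee\inss(\xi)Q$ and tracking Koszul signs as in the model identity $[\alpha_i,\partial_{\alpha_j}]=-\delta_{ij}$, I would show
\begin{equation*}
  -[J,\; \alpha\otimes P\otimes\eta\otimes Y]
  =
  \sum_i (e^i\vee\alpha)\otimes P\otimes \inss(e_i)\eta\otimes Y.
\end{equation*}
Hence $-[J,\argument] = d_K\otimes\id$, where $d_K = \sum_i e^i\vee\otimes\inss(e_i)$ is the standard Koszul differential on $\Sym\liealg{g}^*\otimes\Anti\liealg{g}^*$ and $\id$ acts on the remaining $\prod_i \Sym^i\liealg{g}\otimes\Tpoly(C)$.

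\textbf{Step 2: Contracting homotopy and cohomology.} Define $h = \sum_j\inss(e_j)\otimes e^j\wedge$ on the Koszul factor, extended by the identity on the other factors. Since $h$ is built from the canonical element $\sum_j e_j\otimes e^j\in\liealg{g}\otimes\liealg{g}^*$ (the identity of $\liealg{g}$), it is $G$-equivariant and therefore restricts to invariants. A short direct computation, using the Euler identities $\sum_i e^i\cdot\inss(e_i) = k\cdot\id$ on $\Sym^k\liealg{g}^*$ and $\sum_i e^i\wedge\inss(e_i) = \ell\cdot\id$ on $\Anti^\ell\liealg{g}^*$, yields
\begin{equation*}
  d_K h + h d_K
  =
  (k+\ell)\cdot\id \quad\text{on}\quad \Sym^k\liealg{g}^*\otimes\Anti^\ell\liealg{g}^*.
\end{equation*}
Thus $\tfrac{1}{k+\ell}h$ provides a contracting homotopy in every bidegree except $(0,0)$, and the cohomology of the $G$-invariant complex collapses onto the bidegree $(0,0)$ subspace $(\prod_{i=0}^\infty\Sym^i\liealg{g}\otimes\Tpoly(C))^G$, which is exactly the image of $\iota$.

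\textbf{Step 3: DGLA compatibility of $\iota$.} Elements in the image of $\iota$ have $\alpha = 1$ and $\eta = 1$. Restricting the bracket on $T_\Tay(C\times\liealg{g}^*)$ to such elements, the Gerstenhaber contribution vanishes because $[P,Q]=0$, while the $\Tpoly(C)$-Schouten bracket extends by Leibniz with no mixing, as coordinates on $\liealg{g}^*$ Poisson-commute with multivectors tangent to $C$. Hence the restricted bracket is $(P_1\otimes Y_1,\, P_2\otimes Y_2)\mapsto P_1\vee P_2\otimes[Y_1,Y_2]_C$, which is exactly the Lie bracket on the source of $\iota$. Combined with Step 2 this proves that $\iota$ is a DGLA quasi-isomorphism. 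The only genuine bookkeeping difficulty is the sign tracking in Step 1; beyond that, everything reduces to the acyclicity of the standard Koszul complex together with the observation that the homotopy is $G$-equivariant.
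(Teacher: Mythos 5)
Your proposal is correct and follows essentially the same route as the paper: identify $-[J,\argument]$ as the Koszul differential pairing the $\Sym\liealg{g}^*$- and $\Anti\liealg{g}^*$-factors, and contract it with the homotopy $\inss(e_l)\otimes\id\otimes e^l\wedge{}\otimes\id$, whose graded anticommutator with the differential is multiplication by the total $\Sym\liealg{g}^*$- plus $\Anti\liealg{g}^*$-degree, so the cohomology collapses onto the bidegree-$(0,0)$ part $\bigl(\prod_{i}\Sym^i\liealg{g}\otimes\Tpoly(C)\bigr)^{\group{G}}$. Your additional remarks on the $\group{G}$-equivariance of the homotopy and on the compatibility of $\iota$ with the brackets are correct and merely make explicit what the paper leaves implicit.
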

\begin{proof}
  The map $ h = \inss(e_l) \otimes \id \otimes e^l \wedge \otimes
  \id$ satisfies
  \begin{equation*}
    -[J,\argument] \circ h(\xi \otimes P \otimes \alpha \otimes X) 
    -  h \circ [J,\argument] (\xi \otimes P \otimes \alpha \otimes X) 
    =
    (\deg(\alpha) + \deg(\xi))  (\xi \otimes P \otimes \alpha \otimes X) 
  \end{equation*}
  and the statement follows.
\end{proof}
Note that the cohomology 
$(\prod_{i=0}^\infty (\Sym^i\liealg{g} \otimes\Tpoly(C)))^\group{G}$ can be 
equipped with a non-trivial, but canonical differential.

\begin{proposition}
  \label{prop:delonequidgla}
  The differential $\del$ defined in \eqref{eq:DifferentialDel} turns
  $((\prod_{i=0}^\infty (\Sym^i\liealg{g}
  \otimes\Tpoly(C)))^\group{G},[\argument,\argument])$ into a DGLA.
\end{proposition}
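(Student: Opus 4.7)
The proposition reduces to verifying three properties of $\partial$: that it preserves the $\group{G}$-invariant subspace, that $\partial^2 = 0$, and that it is a graded derivation of the Lie bracket. The quick parts are $\partial^2 = 0$ and $\group{G}$-invariance. Applying $\partial$ twice to $P \otimes X$ yields $\sum_{i,j} \inss(e^j)\inss(e^i) P \otimes (e_j)_C \wedge (e_i)_C \wedge X$, which vanishes because the contractions on $\Sym\liealg{g}$ commute (symmetric in $i,j$) while the wedge product of the fundamental vector fields is antisymmetric in $i,j$. For $\group{G}$-invariance, $\partial$ is essentially contraction with the canonical tensor $\sum_i e^i \otimes (e_i)_C \in \liealg{g}^* \otimes \Secinfty(TC)$, which is $\group{G}$-invariant because the fundamental-vector-field assignment $\xi \mapsto \xi_C$ is equivariant ($\Phi^C_{g,*}\xi_C = (\Ad_g\xi)_C$); hence $\partial$ commutes with the $\group{G}$-action.

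For the graded Leibniz rule, I would compute on generators $P \otimes X$ and $Q \otimes Y$ with $P, Q \in \Sym\liealg{g}$ and $X, Y \in \Tpoly(C)$. Using that $\inss(e^i)$ is a derivation of the symmetric product, one obtains
\begin{equation*}
  \partial[P\otimes X, Q\otimes Y]
  =
  \sum_i \big(\inss(e^i)P\vee Q + P\vee \inss(e^i)Q\big)\otimes (e_i)_C\wedge [X,Y].
\end{equation*}
Expanding $[\partial(P\otimes X), Q\otimes Y]$ and $[P\otimes X, \partial(Q\otimes Y)]$ via the Gerstenhaber--Leibniz identity for the Schouten--Nijenhuis bracket produces, besides matching $(e_i)_C\wedge[X,Y]$ contributions, correction terms involving $[(e_i)_C, Y]\wedge X$ and $[(e_i)_C, X]\wedge Y$. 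The plan is to cancel these using $\group{G}$-invariance: writing $P\otimes X = \sum_k P_k \otimes X_k$ and $Q\otimes Y = \sum_\ell Q_\ell \otimes Y_\ell$, infinitesimal invariance gives $\sum_k \mathrm{ad}_{e_i} P_k \otimes X_k + P_k \otimes [(e_i)_C, X_k] = 0$ and an analogous identity for $Q\otimes Y$, which rewrites each correction term in terms of $\mathrm{ad}$-actions on the $\Sym\liealg{g}$ factor; these then cancel in pairs after summation over $i$.

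A more conceptual alternative that sidesteps the sign-tracking is to identify $\partial$ as the restriction of $[\pi_\KKS, \argument]$ from $T_\Tay(C\times\liealg{g}^*)$ to the subspace $\big(\prod_{i=0}^\infty \Sym^i\liealg{g}\otimes\Tpoly(C)\big)^\group{G}$. By Remark~\ref{remark:actionalgebroid} $\pi_\KKS$ corresponds (up to sign) to the linear Poisson structure on $C\times\liealg{g}^*$ dual to the action Lie algebroid, hence $[\pi_\KKS, \pi_\KKS] = 0$; thus $[\pi_\KKS, \argument]$ is automatically a square-zero graded derivation of the bracket on $T_\Tay$. The subspace in question is stable under this operator (by $\group{G}$-invariance of $\pi_\KKS$ together with inspection of the $\Anti\liealg{g}^*$-degree), and a direct check on generators $P\otimes 1\otimes X$ identifies the induced operator with $\partial$. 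The main obstacle in either approach is the careful bookkeeping: in the direct route, the combined sign and invariance argument for the correction terms; in the conceptual route, matching the bracket with $\pi_\KKS$ to $\partial$ on generators once the decomposition of $\pi_\KKS$ into its two summands is fed through the Gerstenhaber extension of the $T_\Tay$-bracket.
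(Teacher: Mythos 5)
Your primary (direct) route is essentially the paper's own proof: compute $\del[\xi\otimes X,\eta\otimes Y]$, $[\del(\xi\otimes X),\eta\otimes Y]$ and $[\xi\otimes X,\del(\eta\otimes Y)]$ on generators, and use infinitesimal $\group{G}$-invariance to rewrite the correction terms $[(e_i)_C,X]$ and $[(e_i)_C,Y]$ as $\mathrm{ad}$-type contractions on the $\Sym\liealg{g}$ factor so that they cancel, with $\del^2=0$ and preservation of invariants treated as immediate, exactly as in the paper. Your secondary, conceptual route via $[\pi_\KKS,\argument]$ is not the paper's argument here but matches what the paper establishes independently in Proposition~\ref{prop:bracketpiminusr}; just note that stability of the Cartan-model subspace under $[\pi_\KKS,\argument]$ rests on the vanishing of the Chevalley--Eilenberg part on invariant arguments, not merely on the invariance of $\pi_\KKS$ and degree counting.
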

\begin{proof}
  A straightforward computation shows
  \begin{align*}
    \del[\xi \otimes X, \eta \otimes Y]
    &=
    \inss(e^i)(\xi \vee \eta) \otimes (e_i)_C \wedge [X,Y],
    \\
    [\del(\xi\otimes X), \eta \otimes Y]
    &=
    (-1)^k \inss(e^i)(\xi) \vee \eta \otimes X \wedge [ (e_i)_C, Y]
    + \inss(e^i)(\xi)\vee \eta \otimes (e_i)_C \wedge[X,Y],
    \\
      [\xi \otimes X, \del(\eta \otimes Y)]
      &=
      (-1)^{k-1} \xi \vee \inss(e^i)(\eta) \otimes (e_i)_C \wedge [X,Y] 
      - \xi \vee \inss(e^i)(\eta)\otimes [(e_i)_C,X] \wedge Y,
  \end{align*}
  where $X \in \Tpoly^{k-1}(C)$. Using the $G$-invariance we get
  \begin{equation*}
    \xi \otimes [(e_i)_C,X]
    =
    - f_{ij}^k e_k \vee \inss(e^j)\xi \otimes X
    \quad
    \text{   and   }
    \quad
    \eta \otimes [(e_i)_C,Y]
    =
    - f_{ij}^k e_k \vee \inss(e^j)\eta \otimes Y.
  \end{equation*}
  Summarizing, this yields
  \begin{equation*}
    \del[\xi \otimes X, \eta \otimes Y]
    =
    [\del(\xi \otimes X), \eta \otimes Y]
    +(-1)^{k-1} [\xi \otimes X, \del(\eta \otimes Y)]
  \end{equation*}
  and the proposition is shown.
\end{proof}
This motivates the following definition. 
\begin{definition}[Cartan model]
  Let $G$ be a Lie group action on a manifold $C$. The DGLA defined by
  \begin{align}
    \left(\prod_{i=0}^\infty (\Sym^i\liealg{g}
    \otimes\Tpoly(C)))^\group{G},\del,[\argument,\argument]\right)
  \end{align}
  is called \emph{Cartan model} and is denoted by $T_\Cart(C)$.
\end{definition}
Seen as a module, $\left(\prod_{i=0}^\infty (\Sym^i \liealg{g} \otimes
\Tpoly(C))\right)^\group{G}$ is the dual of the Cartan model
$(\Sym\liealg{g}^*\otimes \Omega(C))^\group{G}$ for the equivariant de
Rham cohomology \cite{guillemin.sternberg:1999a,reichert:2017b}. Even
the differential $\del$ is dual to the insertion $\ins_\bullet = e^i
\vee \otimes \insa((e_i)_C)$ that forms together with the de Rham
differential the coboundary operator in the usual Cartan model for
equivariant cohomology.  In the case of forms the equivariant
cohomology of the principal fiber bundle $C$ is isomorphic to the de
Rham cohomology of the reduced manifold, whereas in our setting we
want to show that we get the multivector fields on $M_\red$ as
cohomology.  Note that we have a canonical DGLA map
\begin{align*}
  p\colon  
  (T_\Cart(C),\del,[\argument,\argument])
  \longrightarrow
  (T_\poly(M_\red),0,[\argument,\argument]),
\end{align*}
which is just given by the projection to the symmetric degree $0$
followed by the projection to $M_\red$. It is well-defined since
invariant multivector fields are projectable.
\begin{proposition}
  \label{prop:redfromctomred}
  The DGLA-map  
  \begin{align}
    p\colon 
    \left(\left(\prod_{i=0}^\infty (\Sym^i\liealg{g}
    \otimes\Tpoly(C))\right)^\group{G},\del, [\argument,\argument]\right)
    \longrightarrow
    (\Tpoly(M_\red),0,[\argument,\argument])
  \end{align}  
  is a quasi-isomorphism. 
\end{proposition}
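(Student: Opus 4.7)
The plan is to reduce the claim to a fiberwise Koszul acyclicity by exploiting a principal connection on the $G$-bundle $C \to M_\red$.

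Since $G$ acts freely and properly on $C$, the map $p\colon C \to M_\red$ is a principal $G$-bundle. I would pick a $G$-invariant principal connection (which exists via a partition of unity on $M_\red$), giving a $G$-equivariant splitting $TC = VC \oplus HC$ with vertical part $VC \cong C\times\liealg{g}$ trivialized by the fundamental vector fields and horizontal part $HC \cong p^{*}TM_\red$. Taking exterior powers and passing to sections yields a $G$-equivariant identification
\begin{equation*}
\Sym\liealg{g}\otimes\Tpoly(C) \;\cong\; \bigl(\Sym\liealg{g}\otimes\Anti\liealg{g}\bigr)\otimes_{\mathbb{R}}\Secinfty(\Anti HC),
\end{equation*}
under which the differential $\del$ of \eqref{eq:DifferentialDel} is identified with the standard Koszul differential on the first factor tensored with the identity on the horizontal sections.

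I would then invoke the classical acyclicity of the Koszul complex $(\Sym\liealg{g}\otimes\Anti\liealg{g},\del_{\mathrm{Kosz}})$: the Euler-type contracting homotopy $\sigma$ satisfying $\del_{\mathrm{Kosz}}\sigma + \sigma\del_{\mathrm{Kosz}} = \id - \mathrm{pr}_{\mathbb{R}}$ is intrinsic to $\liealg{g}$ and hence $G$-equivariant for the adjoint action. Tensoring $\sigma$ with the identity on $\Secinfty(\Anti HC)$ produces a $G$-equivariant contracting homotopy for the full complex $(\Sym\liealg{g}\otimes\Tpoly(C),\del)$ onto the subspace $\Secinfty(\Anti HC)\subset \Tpoly(C)$ sitting in bi-degree $(0,0)$.

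Since the homotopy is $G$-equivariant it restricts to the invariant subcomplex $T_\Cart(C) = (\Sym\liealg{g}\otimes\Tpoly(C))^G$, so its cohomology is concentrated in symmetric degree zero and equals $\Secinfty(\Anti HC)^G$. Because $HC \cong p^{*}TM_\red$ as $G$-equivariant vector bundles, $G$-invariant horizontal multivector fields on $C$ are uniquely pullbacks of multivector fields on $M_\red$, giving $\Secinfty(\Anti HC)^G \cong \Tpoly(M_\red)$. The DGLA map $p$ of the statement—restriction to $\Sym^0$ followed by push-forward to $M_\red$—vanishes automatically on the $G$-invariant vertical part (precisely the image of $\del$ from $\Sym^1\liealg{g}\otimes\Tpoly(C)$) and coincides with the above identification on cohomology classes, proving it is a quasi-isomorphism. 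The main technical step is the identification of $\del$ with the Koszul differential after choosing the connection; the subtler issue of commuting $G$-invariants with cohomology is automatic because the contracting homotopy itself is $G$-equivariant.
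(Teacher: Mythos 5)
Your proof is correct and follows essentially the same route as the paper: both arguments choose a principal connection $\omega$ on $C\to M_\red$, split $TC=\Ver(C)\oplus\Hor(C)$, and contract onto the invariant horizontal multivector fields in symmetric degree zero via the Euler-type homotopy $h=e_i\vee\otimes\ins(\omega^i)$ satisfying $h\del+\del h=(\deg_{\liealg{g}}+\deg_\ver)\id$. Your reformulation of this homotopy as the classical Koszul contraction on $\Sym\liealg{g}\otimes\Anti\liealg{g}$ tensored with the identity on horizontal sections is only a cosmetic repackaging of the same operator.
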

\begin{proof}
  Consider the principal bundle $\pr \colon C \rightarrow M_\red$ and
  choose a principal bundle connection $\omega =\omega^i \otimes e_i
  \in \Omega^1(C) \otimes \liealg{g}$, i.e. an equivariant horizontal
  lift inducing
  \begin{equation*}
    TC 
    = 
    \Ver(C) \oplus \Hor(C) 
    =
    \ker T\pr \oplus \ker \omega ,
  \end{equation*}
  where $\ker \omega \cong \pr^* TM_\red$. Then we can construct a homotopy for
  $\del$ by $h = e_i \vee \otimes \ins(\omega^i)$.  Since
  $\omega^i((e_j)_M) = \delta^i_j$, it satisfies
  \begin{equation*}
    h \del + \del h 
    = 
    (\deg_\liealg{g} + \deg_\ver)\id.
  \end{equation*}
With the vertical degree we mean the degree in the splitting 
$	\Anti^kTC = \bigoplus_{i+j=k}\Anti^{i} \Ver(C)\tensor\Anti^j \Hor(C)$.
\end{proof}
In other words, the above proposition yields for every principal connection
$\omega\in \Omega^1(C)\tensor\liealg{g}$ the following deformation
retract 
\begin{equation}\label{Eq: DefRetrCart}
  \begin{tikzcd} 
    \Tpoly(M_\red)
    \arrow[rr, shift left, "i"] 
    &&   
    (\prod_{i=0}^\infty (\Sym^i\liealg{g}	\otimes\Tpoly(C)))^\group{G}
    \arrow[ll, shift left, "p"]
    \arrow[loop right, "h"] 
  \end{tikzcd} 
\end{equation}
where $i$ denotes the horizontal lift with respect to the connection
$\omega$ and the homotopy $h$ is given on all homogeneous elements by
\begin{align*}
  h ( \xi \otimes X)
  =
  \begin{cases}
    \frac{1}{\deg(\xi) + \deg_\ver(X)} \; e_i \vee \xi \otimes 
    \ins(\omega^i)X   \quad \quad & \text{if} \deg(\xi) + \deg_\ver(X) \neq 0 \\
    0 \quad \quad \quad \quad &\text{else}.
  \end{cases}
\end{align*}
Indeed, the algebraic relations of a deformation retract between $i$,
$p$ and $h$ are easily seen to be verified.
Recall that additionally $p$ is a DGLA morphism, which puts us exactly
in the situation of Proposition~\ref{prop:Infinityinclusion}.  So
before we continue to put the Cartan model in the context of
reduction, we give an explicit formula for a quasi-inverse of $p$ .
	
\begin{proposition}
  \label{prop:iinftycartan}
  For a fixed principal fiber connection $\omega \in
  \Omega^1(C)\otimes \liealg{g}$ with curvature $\Omega\in
  \Omega^2(C)\tensor \liealg g$, one obtains an
  $L_\infty$-quasi-inverse of $p$
  \begin{equation}
    i_\infty 
    \colon 
    \Sym(\Tpoly(M_\red)[1]) 
    \longrightarrow 
    \Sym\left(\left(\prod_{i=0}^\infty (\Sym^i\liealg{g}\otimes\Tpoly(C)
    )\right)^\group{G}[1]\right)
  \end{equation}
 	given by 
 \begin{align}
	i_\infty
	=
	\E^{\Omega}\circ (\cdot)^\hor,
\end{align}
where one extends $(\cdot)^\hor$ as a coalgebra morphism and $\Omega$
as a coderivation of degree $0$. In particular,
\begin{equation}
  i_{\infty,1}(X) 
  =
  X^\hor 
  \quad \text{ and }  \quad
  i_{\infty,2}(X,Y)
  =
  (-1)^{\abs{X}}e_i\tensor \Omega^i(X^\hor,Y^\hor)
\end{equation}
for a basis $\{e_i\}_{i\in I}$ of $\liealg g$.
\end{proposition}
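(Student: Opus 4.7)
My plan is to verify that the coalgebra morphism $i_\infty = \E^\Omega \circ (\cdot)^\hor$ coincides with the $L_\infty$-quasi-inverse $I$ of $p$ produced by Proposition~\ref{prop:Infinityinclusion}. Since the deformation retract~\eqref{Eq: DefRetrCart} satisfies the hypotheses of that proposition, $I$ exists and is determined recursively by $I^1_1 = i = (\cdot)^\hor$ and $I^1_k = h \circ Q^1_2 \circ I^2_k$ for $k \geq 2$; only the explicit form needs to be established.

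The first step is to translate the geometric data of the principal connection $\omega$ and its curvature $\Omega = \D\omega + \frac{1}{2}[\omega,\omega]$ into algebraic identities. The key input is that horizontal lifting fails to preserve the Lie bracket by a vertical curvature term,
\begin{equation*}
    [X^\hor_a, X^\hor_b]
    =
    [X_a, X_b]^\hor - \widetilde{\Omega(X^\hor_a, X^\hor_b)},
\end{equation*}
where $\widetilde{\,\cdot\,}$ denotes the fundamental-vector-field map extended to multivectors as a derivation. Using the explicit formula for $h$, which acts on a vertical element by extracting its $\liealg{g}$-component via the connection one-form, one finds at $k = 2$ that $h \circ Q^1_2(X^\hor \vee Y^\hor) = (-1)^{|X|} e_i \otimes \Omega^i(X^\hor, Y^\hor)$, matching the stated formula for $i_{\infty, 2}$.

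For the inductive step one computes $I^1_{k+1} = h \circ Q^1_2 \circ I^2_{k+1}$ with the proposed $I = \E^\Omega \circ (\cdot)^\hor$. Extending $\Omega$ as a degree-$0$ coderivation means that $I^2_{k+1}$ breaks into a sum indexed by splittings of the $k+1$ inputs into two ordered packets, each carrying a number of horizontal lifts contracted against a power of $\Omega$. Applying $Q^1_2$ then produces three families of terms: brackets of two pure horizontal lifts, whose curvature correction supplies a new $\Omega$-factor; brackets of a horizontal lift with an $e_i \otimes \Omega^i(\cdots)$ factor, which $\group{G}$-invariance together with a Leibniz expansion reduces to the previous case; and brackets of two $\Omega$-factors, controlled by the Bianchi identity $\del \Omega = -[\omega, \Omega]$. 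After applying $h$ and re-symmetrising, the collected expression agrees with the $(k+1)$-st Taylor coefficient of $\E^\Omega \circ (\cdot)^\hor$, closing the induction.

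The main obstacle is the combinatorial bookkeeping of the inductive step, in particular the compatibility of the shuffle averaging built into~\eqref{coalgebramorphism}, the Koszul signs from the shift $[1]$, and the $e_i \vee$ prefactor contributed by $h$. A useful consistency check is the Maurer--Cartan image: for any Poisson structure $\pi$ on $M_\red$, the associated image under $i_\infty$ is $\pi^\hor + \Omega$, and its Maurer--Cartan equation in $(T_\Cart(C), \del, [\argument, \argument])$ decomposes by $\Sym^i \liealg{g}$-degree into the Poisson property $[\pi, \pi] = 0$, the horizontality identity $\del \pi^\hor = 0$, the structure equation $\del \omega + \frac{1}{2}[\omega, \omega] = \Omega$, and the Bianchi identity.
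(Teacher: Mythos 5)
Your overall strategy is the paper's: both identify $i_\infty$ with the recursive quasi-inverse $I_1^1=(\cdot)^\hor$, $I_{k+1}^1=h\circ Q_2^1\circ I_{k+1}^2$ supplied by Proposition~\ref{prop:Infinityinclusion} and then verify by induction that this recursion reproduces the coefficients $\frac{1}{k!}\Omega^{k}$ of $\E^\Omega\circ(\cdot)^\hor$; your base case and $k=2$ computation agree with the paper. The gap is in the inductive step, which is the entire content of the proof and which your sketch misdescribes. The paper's key observation is that \emph{every} $\Tpoly(C)$-component occurring in the image of $\E^\Omega\circ(\cdot)^\hor$ is a horizontal multivector field (contracting horizontal lifts against $\Omega^i$ preserves horizontality), so that all three of your ``families'' of terms are treated by one and the same mechanism: the multivector Cartan-type identity
\begin{equation*}
  \D\alpha(X,Y)
  =
  [\insa(\alpha)X,Y] - (-1)^{k}[X,\insa(\alpha)Y] - \insa(\alpha)[X,Y],
\end{equation*}
specialized to $\alpha=\omega^i$ with both arguments horizontal, where the first two terms vanish and $\D\omega^i=\Omega^i$ on horizontal arguments. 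No Bianchi identity enters anywhere --- the bracket of two $\Omega$-factors is again a Schouten bracket of horizontal multivector fields with $\Sym\liealg{g}$-coefficients, and the $\Sym\liealg{g}$-parts do not participate in the bracket of $T_\Cart(C)$, so ``$\group{G}$-invariance plus Leibniz'' is not what reduces the mixed terms either. You also omit the source of the decisive combinatorial factor: $h$ normalizes by $\deg_{\liealg{g}}+\deg_\ver=k$, and it is this $1/k$, together with the shuffle sum and the factors $\frac{(k-1)!}{(j-1)!(k-j)!}$ coming from $(\E^\Omega)^2_{k+1}$, that reassembles the answer into $\frac{1}{k!}\Omega^{k}$. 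Without these two ingredients the induction does not close as written.

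A smaller but real error: your concluding ``consistency check'' is wrong as stated. The Maurer--Cartan image of a Poisson structure $\pi$ is not $\pi^\hor+\Omega$ but the series $\sum_{k\geq1}\frac{1}{k!(k-1)!}\Omega^{k-1}((\pi^\hor)^{\vee k})$ of Corollary~\ref{Cor: SurjProj}, and $\del\pi^\hor=0$ fails in general --- the failure of $\pi^\hor$ alone to satisfy the Maurer--Cartan equation in $(T_\Cart(C),\del,[\argument,\argument])$ is precisely what the curvature corrections compensate. Finally, note that the paper works with the convention $\Omega=\D\omega-\frac{1}{2}[\omega,\omega]$ rather than your $\Omega=\D\omega+\frac{1}{2}[\omega,\omega]$; this is immaterial for the argument, since only the restriction of $\D\omega^i$ to horizontal arguments is ever used.
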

\begin{proof}
  Let us fix a principal connection $\omega\in \Omega^1(C)\tensor
  \liealg g$ and denote by $h$ the corresponding homotopy and by
  $\Omega$ its curvature.  Due to that fact that Equation~\eqref{Eq:
    DefRetrCart} is a deformation retract and $p$ is a DGLA morphism,
  we are exactly in the situation of
  Proposition~\ref{prop:Infinityinclusion} and the statement becomes a
  purely computational issue, so let us start with some
  book-keeping. Throughout the proof, we will make use of the
  following equation for $X\in \Secinfty(\Anti^kTC)$, $Y\in
  \Secinfty(\Anti^\ell TC)$ and $\alpha\in \Omega^1(C)$:
  \begin{align}\label{Eq: DiffGeoEq}
    \tag{$*$}
    \D\alpha(X,Y)
    =
    [\insa(\alpha) X,Y] 
    -(-1)^{k}[X,\insa(\alpha )Y]-
    \insa(\alpha)[X,Y]
  \end{align}
  where for the left-hand side, we define 
  \begin{align*}
    \D\alpha(X,Y) 
    =
    (\D\alpha)_{ij}\insa(\D x^i)X\wedge \insa(\D x^j) Y
  \end{align*}
  in a coordinate patch. The validity of Equation \eqref{Eq:
    DiffGeoEq} for one-forms of the type $\alpha = f \D g$ follows by
  the usual Schouten calculus.  By $\mathbb{R}$-linearity of Equation
  \eqref{Eq: DiffGeoEq}, its validity follows for general 1-forms in
  every coordinate patch and hence also globally.  Let us define,
  using the curvature $\Omega$, the map
  \begin{align*}
    \Omega\colon 
    \Sym^2\big(\prod_i (\Sym^i\liealg g\tensor \Tpoly(C)[1])^G\big)
    \longrightarrow 
    (\prod_i \Sym^i\liealg g\tensor \Tpoly(C))^G[1]
  \end{align*}
  defined on homogeneous and factorizing elements $P_j\tensor X_j\in
  \prod_i (\Sym^i\liealg g\tensor \Tpoly(C)[1])^G$ , $j=1,2$ by
  \begin{align*}
    \Omega(P_1\tensor X_1\vee P_2\tensor X_2)
    =
    (-1)^{\abs{X_1}}e_i\vee P_1\vee P_2\tensor \Omega^i(X_1,X_2).
  \end{align*}
  This map is well defined, i.e. in fact graded symmetric, and of
  degree $0$. With a slight abuse of notation we denote also by
  \begin{align*}
    \Omega\colon 
    \Sym^\bullet\big(\prod_i (\Sym^i\liealg g\tensor \Tpoly(C)[1])^G\big)
    \longrightarrow
    \Sym^{\bullet-1}\big(\prod_i (\Sym^i\liealg g\tensor \Tpoly(C)[1])^G\big)
  \end{align*}	 
  its extension as a coderivation of degree $0$, i.e.
  \begin{align*}
    \Omega(X_1\vee\dots \vee X_k)
    =
    \sum_{\sigma \in \mathrm{Sh}(2,k-2)} \epsilon(\sigma)
    \Omega(X_{\sigma(1)}\vee X_{\sigma(2)})\vee X_{\sigma(3)}\vee\dots\vee X_{\sigma(k)}
  \end{align*}
  for $X_j\in \prod_i (\Sym^i\liealg g\tensor \Tpoly(C)[1])^G$. Note
  that for every $k\in \mathbb{N}$ and $X_j\in \prod_i (\Sym^i\liealg
  g\tensor \Tpoly(C)[1])^G$, we have that
  \begin{align*}
    \Omega^k(X_1\vee\dots \vee X_k)
    =
    0
  \end{align*}
  since $\Omega$ decreases the symmetric degree by one and hence the expression 
  \begin{align*}
    \E^\Omega
    :=
    \sum_k\frac{1}{k!}\Omega^k
  \end{align*}	 
  is a well defined map. Since $\Omega$ is a coderivation of degree
  $0$, it is even a coalgebra morphism. Its components are given by
  \begin{align*}
    (\E^\Omega)_k^\ell
    =
    \frac{1}{(k-\ell)!}\Omega^{k-\ell},
  \end{align*}
  which can be seen again by counting symmetric degrees. This shows in
  particular, that $(\E^\Omega\circ (\cdot)^\hor)_1^1=(\cdot)^\hor$.
  We proceed now inductively, so let us assume that $\E^\Omega\circ
  (\cdot)^\hor$ coincides with $i_\infty$ from Proposition~\ref{prop:Infinityinclusion}
   up to order k. For $X_j\in
  \Tpoly(M_\red)[1]$, $j=1,\dots,k+1$, we have
  \begin{align*}
    & i_{\infty,k+1}(X_1  \vee\dots\vee X_{k+1})
    =
    h\circ Q_2^1 \circ i_{\infty,k+1}^2(X_1\vee\dots\vee X_{k+1})\\
    &	=
    \sum_{j=1}^k \sum_{\sigma \in \mathrm{Sh}(i,k+1-i)}\frac{\epsilon(\sigma)}{2} 
    h\circ Q_2^1\bigg(i_{\infty,j}^1(X_{\sigma(1)}\vee\dots\vee X_{\sigma(j)})\vee
    i_{\infty,k-i+1}^1(X_{\sigma(j+1)}\vee\dots\vee X_{\sigma(k+1)})\bigg)\\
    &	=
    \sum_{j=1}^k \sum_{\sigma \in \mathrm{Sh}(i,k+1-i)}\frac{\epsilon(\sigma)}{2} 
    h\circ Q_2^1 \\
    & \quad \quad \quad \bigg(\frac{\Omega^{j-1}}{(j-1)!}
    (X_{\sigma(1)}^\hor\vee\dots\vee X_{\sigma(j)}^\hor)\vee
    \frac{\Omega^{k-j}}{(k-j)!}(X_{\sigma(j+1)}^\hor\vee\dots\vee X_{\sigma(k+1)}^\hor)\bigg).
  \end{align*}
  Let us now take a look at 
  \begin{align*}
    h& \circ  Q_2^1\bigg(\Omega^{j-1}(X_{\sigma(1)}^\hor\vee\dots\vee X_{\sigma(j)}^\hor)
    \vee \Omega^{k-j}(X_{\sigma(j+1)}^\hor\vee\dots\vee X_{\sigma(k+1)}^\hor)\bigg)\\
    &	=
    (-1)^{1+\sum_{k=1}^j\abs{X_{\sigma(j)}}}h[\Omega^{j-1}(X_{\sigma(1)}^\hor\vee
      \dots\vee X_{\sigma(j)}^\hor),
      \Omega^{k-j}(X_{\sigma(j+1)}^\hor\vee\dots\vee X_{\sigma(k+1)}^\hor)]\\
    & =
    \frac{(-1)^{1+\sum_{k=1}^j\abs{X_{\sigma(j)}}}}{k}e_i\tensor \insa(\omega^i)
	 [\Omega^{j-1}(X_{\sigma(1)}^\hor\vee\dots\vee X_{\sigma(j)}^\hor),
	   \Omega^{k-j}(X_{\sigma(j+1)}^\hor\vee\dots\vee X_{\sigma(k+1)}^\hor)]\\
	 & \overset{\eqref{Eq: DiffGeoEq}}{=}
	 \frac{(-1)^{\sum_{k=1}^j\abs{X_{\sigma(j)}}}}{k}e_i\tensor \D \omega^i\big(
	 (\Omega^{j-1}(X_{\sigma(1)}^\hor\vee\dots\vee X_{\sigma(j)}^\hor),
	 \Omega^{k-j}(X_{\sigma(j+1)}^\hor\vee\dots\vee X_{\sigma(k+1)}^\hor)\big).
  \end{align*}
  The factor $\frac{1}{k}$ appears, since $\Omega^k$ raises the
  symmetric degree in $\liealg g$ by $k$ and hence the commutator has
  $k-1$ symmetric degrees in $\liealg g$ degrees and at most one
  vertical degree, since both of the entries are horizontal
  multivector fields. Moreover, since $\insa(\omega^i)$ annihilates
  the terms which do not have a vertical degree, the formula is
  valid. Note that by definition of the curvature of $\omega$, we have
  $\Omega=\D\omega-\frac{1}{2}[\omega,\omega]$ or for a chosen basis
  $\Omega^i=\D\omega^i-\frac{1}{2}f^i_{kl}\omega^k\wedge\omega^l$. Since
  $\omega^i$ vanishes on horizontal lifts, we can write
  \begin{align*}
    h\circ Q_2^1\bigg(&\Omega^{j-1}(X_{\sigma(1)}\vee\dots\vee X_{\sigma(j)})\vee
    \Omega^{k-j}(X_{\sigma(j+1)}\vee\dots\vee X_{\sigma(k+1)})\bigg)\\
    & =
    \frac{(-1)^{\sum_{k=1}^j\abs{X_{\sigma(j)}}}}{k}e_i\tensor \Omega^i\big(
    (\Omega^{j-1}(X_{\sigma(1)}\vee\dots\vee X_{\sigma(j)}),
    \Omega^{k-j}(X_{\sigma(j+1)}\vee\dots\vee X_{\sigma(k+1)})\big)\\
    & =
    \frac{1}{k}\Omega\big(
    (\Omega^{j-1}(X_{\sigma(1)}\vee\dots\vee X_{\sigma(j)}),
    \Omega^{k-j}(X_{\sigma(j+1)}\vee\dots\vee X_{\sigma(k+1)})\big)
  \end{align*}
  and hence 
  \begin{align*}
    & i_{\infty,k+1}(X_1 \vee\dots\vee X_{k+1})\\
    & =
    \sum_{j=1}^k \sum_{\sigma \in \mathrm{Sh}(i,k+1-i)}\frac{\epsilon(\sigma)}{2k} 
    \Omega\bigg(\frac{\Omega^{j-1}}{(j-1)!}(X_{\sigma(1)}\vee\dots\vee X_{\sigma(j)})\vee
    \frac{\Omega^{k-j}}{(k-j)!}(X_{\sigma(j+1)}\vee\dots\vee X_{\sigma(k+1)})\bigg)\\
    & =
    \frac{1}{k!}\Omega^k(X_1 \vee\dots\vee X_{k+1}).
  \end{align*}
  The last equality follows from the observation that 
  \begin{align*}
    \Omega^k(X_1\vee\dots\vee X_{k+1})&=\Omega(\Omega^{k-1}(X_1\vee\dots\vee X_{k+1}))\\&
    =\Omega\big((k-1)!(\E^{\Omega})^2_{k+1}(X_1\vee\dots\vee X_{k+1})\big)\\&
    =
    \sum_{j=1}^k\sum_{\sigma\in \mathrm{Sh}(i,k+1-1)}
    \frac{\epsilon(\sigma)}{2}\frac{(k-1)!}{(j-1)!(k-j)!} \\&
    \Omega\big( \Omega^{j-1}(X_{\sigma(1)}\vee\dots\vee X_{\sigma(j)})\vee
    \Omega^{k-j}(X_{\sigma(j+1)}\vee\dots\vee X_{\sigma(k+1)})\big),
  \end{align*}
  and the proof is completed.
\end{proof} 
\begin{corollary}\label{Cor: SurjProj}
  The induced map at the level of Maurer-Cartan elements
  \begin{align*}
    p\colon MC(T_\Cart(C))
    \longrightarrow 
    MC(\Tpoly(M_\red))
  \end{align*}
  is surjective. 
\end{corollary}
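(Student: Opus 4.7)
The plan is to invert $p$ explicitly at the level of Maurer--Cartan elements using the $L_\infty$-quasi-inverse $i_\infty$ of $p$ provided by Proposition~\ref{prop:iinftycartan}. Given $\pi \in MC(T_\poly(M_\red))$, the natural candidate preimage is
\begin{equation*}
  \tilde{\pi}
  :=
  \sum_{n\geq 1}\frac{1}{n!}\, i_{\infty,n}(\pi \vee \cdots \vee \pi),
\end{equation*}
and what remains is to verify that this sum converges in $T_\Cart(C)$, that $\tilde{\pi}$ is Maurer--Cartan, and that $p(\tilde{\pi})=\pi$.

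Convergence is immediate from the explicit formula $i_\infty = \E^\Omega \circ (\cdot)^\hor$ of Proposition~\ref{prop:iinftycartan}: the $n$-th Taylor coefficient takes values in $(\Sym^{n-1}\liealg{g} \otimes \Tpoly(C))^G$, so each successive term of $\tilde{\pi}$ lives strictly deeper in the natural filtration of $T_\Cart(C)$ by the $\liealg{g}$-symmetric degree. Because $T_\Cart(C)$ is built as a product over this degree, it is complete with respect to that filtration and the sum defines a well-defined element of $T_\Cart(C)$.

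For the Maurer--Cartan property one applies the standard transfer-of-MC-elements property of $L_\infty$-morphisms recalled at the end of Section~\ref{sec:Preliminaries}. Here the $\liealg{g}$-symmetric-degree filtration is additive under $[\argument,\argument]$ and lowered by one under $\del$, so the equation $\del\tilde{\pi} + \tfrac12[\tilde{\pi},\tilde{\pi}]=0$ can be decomposed degree by degree in $\Sym\liealg{g}$; at each symmetric degree only finitely many terms of the defining series of $\tilde{\pi}$ contribute, and the required identity follows from the coalgebra intertwining relation between $i_\infty$ and the codifferentials applied to $\exp(\pi)$.

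Finally, $p(\tilde{\pi})=\pi$ is immediate from the recursion of Proposition~\ref{prop:Infinityinclusion}: one has $i_{\infty,1}=i$ (the horizontal lift) while $i_{\infty,n} = h\circ L_{\infty,n}$ for $n\geq 2$. The contraction identities $p\circ i = \id$ and $p\circ h = 0$ of the deformation retract~\eqref{Eq: DefRetrCart} then annihilate every term with $n\geq 2$, leaving $p(\tilde{\pi}) = p(\pi^\hor) = \pi$. The only delicate step is the bookkeeping around the filtration: since $\pi$ itself sits in no positive filtration of $\Tpoly(M_\red)$, convergence of $\tilde{\pi}$ is controlled entirely on the target side by the fact that $i_{\infty,n}$ raises the $\liealg{g}$-symmetric degree by $n-1$, so one must verify that this degree-raising behaves correctly with respect to $\del$ and the Schouten bracket to fit the standard MC-transfer framework.
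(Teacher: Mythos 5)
Your proposal is correct and follows essentially the same route as the paper: both define the candidate preimage as $\sum_{k\geq 1}\frac{1}{k!}i_{\infty,k}(\pi^{\vee k})$, establish convergence from the fact that the $k$-th term lands in $(\Sym^{k-1}\liealg{g}\otimes\Tpoly^1(C))^\group{G}$ so the series is locally finite in the product over symmetric degrees, and conclude $p(\Pi)=\pi$ because only the first term survives the projection. Your justification of the last step via $p\circ h=0$ and of the Maurer--Cartan property via the degree-wise MC-transfer is a slightly more explicit spelling-out of what the paper leaves implicit, but it is the same argument.
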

\begin{proof}
  Let $\pi \in \Tpoly^1(M_\red)$ be a Maurer--Cartan element, i.e. a
  Poisson structure. We define
  \begin{align*}
    \Pi
    =
    \sum_{k\geq 1} \frac{1}{k!} i_{\infty,k}(\pi^{\vee k}).
  \end{align*}	 
  This series actually well-defined in $T_\Cart(C)$, since we have 
  \begin{align*}
    \Pi
    =
    \sum_{k\geq 1} \frac{1}{k!} \frac{1}{(k-1)!}\Omega^{k-1}((\pi^\hor)^{\vee k})
  \end{align*}
  using the explicit for of $i_\infty$ as in
  Propostion~\ref{prop:iinftycartan}. But
  \begin{align*}
    \Omega^{k-1}((\pi^\hor)^{\vee k})\in (\Sym^{k-1}\liealg{g}\tensor\Tpoly^1(C))^\group{G},
  \end{align*}	 
  whence $\Pi\in MC(T_\Cart(C))$ is well-defined. The identity
  $p(\Pi)=\pi$ is then clear using again the explicit form.
\end{proof}

\begin{remark}
  In particular, the above proposition shows not only that if $C$ admits a flat
  connection, then $i_\infty$ has $i_{1} =
  (\argument)^\hor$ as only structure map, but also how to correct 
  the horizontal lift in order to obtain an $L_\infty$-quasi-ismorphism.
\end{remark}

Having seen the importance of the ad-hoc defined differential $\del$ on
$T_\Cart(C)$, we lean now again towards
$T_\Tay(C\times\liealg{g}^*)$ and try to find an extension of the 
differential $-[J,\argument]$ in order to make the inclusion $\iota \colon
T_\Cart(C) \rightarrow T_\Tay(C\times\liealg{g}^*)$ a 
quasi-isomorphism with respect to $\del$. As a first step we have:
\begin{proposition}
  \label{prop:bracketpiminusr}
  The map $[\pi_\KKS , \, \cdot\,]$ is a well-defined differential
  on $T_\Tay(C\times\liealg{g}^*)$ that is explicitly given by
  \begin{equation}
    \label{eq:DiffPiKKSR}
	  [\pi_\KKS , \xi \otimes P \otimes \alpha \otimes X]
	  =
	  \xi \otimes \delta_\CE( P \otimes \alpha \otimes X) 
	  + \del(\xi \otimes P \otimes \alpha \otimes X).
  \end{equation}
	Moreover, the canonical inclusion 
	\begin{align*}
	\iota
	\colon 
	(T_\Cart(C),\del,[\argument,\argument])
	\longrightarrow
	(T_\Tay(C\times\liealg{g}^*),[\pi_\KKS-J,\argument],[\argument,\argument]) 
	\end{align*}
	becomes a DGLA morphism. 
\end{proposition}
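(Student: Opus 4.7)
The plan is to split the argument into three tasks: (i) verifying that $[\pi_\KKS,\argument]$ squares to zero, (ii) establishing the explicit formula \eqref{eq:DiffPiKKSR}, and (iii) checking that $\iota$ intertwines the two differentials. For (i) I would identify $\pi_\KKS$ as the image under the Taylor expansion $T_{\liealg{g}^*}$ of (the negative of) the linear Poisson bivector on $C\times\liealg{g}^*$ dual to the action Lie algebroid $C\times\liealg{g}\to C$ of Remark~\ref{remark:actionalgebroid}. Since this bivector is genuinely Poisson---its Jacobi identity reduces to the Jacobi identity for $\liealg{g}$ combined with $[(e_i)_C,(e_j)_C]=f^k_{ij}(e_k)_C$---and since $T_{\liealg{g}^*}$ is a DGLA morphism by the preceding lemma, the Maurer--Cartan property transports to give $[\pi_\KKS,\pi_\KKS]=0$. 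Graded Jacobi then yields $[\pi_\KKS,[\pi_\KKS,\argument]]=\tfrac{1}{2}[[\pi_\KKS,\pi_\KKS],\argument]=0$, so $[\pi_\KKS,\argument]$ is indeed a differential.

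For (ii) I plan to decompose $\pi_\KKS=\pi_1+\pi_2$ with $\pi_1=\tfrac{1}{2}f^k_{ij}e_k\tensor e^i\wedge e^j\tensor 1$ the vertical KKS bivector on $\liealg{g}^*$ and $\pi_2=-1\tensor e^i\tensor(e_i)_C$ the cross term coupling $C$ and $\liealg{g}^*$. Expanding $[\pi_\KKS,\xi\tensor P\tensor\alpha\tensor X]$ through the Gerstenhaber biderivation rule (the external factor $\xi\in\Sym\liealg{g}^*$ commutes through as a multiplier), the bracket with $\pi_1$ reproduces the standard KKS--Schouten action on the $\Sym\liealg{g}\tensor\Anti\liealg{g}^*$-coefficients, while the Lie-derivative contribution $[(e_i)_C,X]$ coming from $[\pi_2,\argument]$ supplies the horizontal part of the $\liealg{g}$-action. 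Together these reassemble into the Chevalley--Eilenberg differential $\delta_\CE$ of the $\liealg{g}$-module $\Sym\liealg{g}\tensor\Tpoly(C)$ applied to $P\tensor\alpha\tensor X$. The remaining contribution from $[\pi_2,\argument]$, the contraction-wedge combination $\xi\tensor\inss(e^i)P\tensor\alpha\tensor(e_i)_C\wedge X$ with appropriate Koszul signs, is precisely the operator $\del$ from \eqref{eq:DifferentialDel}.

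For (iii), $\iota$ is manifestly a morphism of graded Lie algebras, since the bracket on $T_\Cart(C)$ is the restriction of the Gerstenhaber bracket on $T_\Tay(C\times\liealg{g}^*)$. Compatibility with the differentials amounts to $[\pi_\KKS-J,\iota(P\tensor X)]=\iota(\del(P\tensor X))$ for $P\tensor X\in T_\Cart(C)$. On $\iota(P\tensor X)=1\tensor P\tensor 1\tensor X$ the contribution $-[J,\iota(P\tensor X)]$ vanishes, since $J=e^i\tensor e_i$ acts through an insertion into the $\Anti\liealg{g}^*$-factor which is the trivial $1$ here, or equivalently because the purely horizontal multivector $P\tensor X$ Schouten-commutes with the vertical coordinate function $e_i$. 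The remaining $[\pi_\KKS,\iota(P\tensor X)]$ splits by (ii) into $1\tensor\delta_\CE(P\tensor 1\tensor X)+\del(\iota(P\tensor X))$; the Chevalley--Eilenberg summand equals the infinitesimal $\liealg{g}$-action on the $G$-invariant element $P\tensor X$ and hence vanishes, leaving precisely $\del(\iota(P\tensor X))=\iota(\del(P\tensor X))$.

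The main obstacle is task (ii): the Koszul sign bookkeeping in the Gerstenhaber bracket is notorious, and recognising that the $[\pi_1,\argument]$-contribution and the horizontal Lie-derivative piece of $[\pi_2,\argument]$ reassemble into the full $\delta_\CE$ requires a careful and consistent choice of sign conventions. Once this is fixed, both (ii) and (iii) reduce to direct verifications on generators extended by the biderivation property.
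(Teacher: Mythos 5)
Your proposal is correct and follows essentially the same route as the paper: the same splitting of $\pi_\KKS$ into the quadratic $\liealg{g}^*$-part and the cross term $-1\tensor e^i\tensor(e_i)_C$, the same direct Gerstenhaber-bracket computation assembling $\delta_\CE$ and $\del$, the square-zero property deduced from $\pi_\KKS$ being Poisson (cf.\ Remark~\ref{remark:actionalgebroid}), and the observation that on the image of $\iota$ the $[J,\argument]$-part and the Chevalley--Eilenberg part vanish (the latter by $G$-invariance), leaving exactly $\del$. The only difference is that you spell out some details the paper leaves implicit, such as why $-[J,\iota(P\tensor X)]=0$.
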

\begin{proof}
  Since the bracket does not depend on the $\Sym \liealg{g}^*$-part we
  restrict ourselves to $P \otimes \alpha \otimes X$. Let us compute
  \begin{align*}
    \left[\frac{1}{2} f^k_{ij} e_k \otimes e^i \wedge e^j, 
      P\otimes \alpha \otimes X \right]
    & =
    \frac{1}{2} f^k_{ij} \left( e_k \otimes [e^i \wedge e^j, P \otimes \alpha] 
    \otimes X + [e_k,P\otimes \alpha] \wedge e^i \wedge e^j \otimes X 	\right) \\
    & =
    f^k_{ij} e_k \vee \inss( e^j)P \otimes e^i \wedge \alpha \otimes X 
    -\frac{1}{2} f^k_{ij} P \otimes e^i \wedge e^j \wedge \insa(e_k)\alpha 
    \otimes X
  \end{align*}
  and
  \begin{align*}
    [ - e^i \wedge (e_i)_C, P\otimes \alpha \otimes X]
    & = 
    - P \otimes e^i \wedge \alpha \otimes \Lie_{(e_i)_C}X 
    - (-1)^{\abs{\alpha} + \abs{X}} 
    \inss(e^i)P \otimes \alpha \otimes X \wedge (e_i)_C,
  \end{align*}
  where $\abs{X}$ denotes the multivector field degree and
  $\abs{\alpha}$ the form degree. Putting this together we directly
  get \eqref{eq:DiffPiKKSR}.  Since $\pi_\KKS$ is a Poisson structure, 
	we directly see that it squares to zero. Moreover, 
	$[\pi_\KKS,\argument]$ boils down to $\del$ when restricted to elements 
	in the image of the canonical inclusion $\iota$, i.e. in 
	$(1 \otimes \prod_{i=0}^\infty (\Sym^i\liealg{g} \otimes 1 
	\otimes \Tpoly(C)))^\group{G}$.
\end{proof}
Alternatively, the identity
\begin{equation}
  \label{eq:PiKKSRJ}
  [\pi_\KKS, J]
	=
	\lambda,
\end{equation}
implies that the canonical $\pi_\KKS $ defines a curved Maurer-Cartan element
in the curved DGLA $(T_\Tay(C\times \liealg{g}^*),\lambda,-[J,\argument],
[\argument,\argument])$. Therefore, twisting by $\pi_\KKS$ yields a Lie
algebra differential on $T_\Tay(C\times \liealg{g}^*)$ with curvature
zero. The next step is, of course, to check if $\iota$ is still a quasi-isomorphism.

\begin{proposition}
  \label{prop:InclusionCtoMQuis}
  The inclusion
	\begin{equation}
	 \iota \colon
	(T_\Cart(C),\del,[\argument,\argument])
	\longrightarrow
	(T_\Tay(C\times\liealg{g}^*),[\pi_\KKS-J,\argument],[\argument,\argument])	
	\end{equation}
	is a quasi-isomorphism of DGLAs.
\end{proposition}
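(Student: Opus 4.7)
The plan is to apply the homological perturbation lemma, starting from the strong deformation retract underlying Proposition~\ref{prop:cohomologytrivialbracketj}. The first step is to upgrade the quasi-isomorphism there into an SDR between $(T_\Cart(C),0)$ and $(T_\Tay(C\times\liealg g^*), -[J,\argument])$, using the explicit homotopy $h = \inss(e_l)\tensor \id\tensor e^l\wedge\tensor \id$ (normalized off its kernel by $1/(\deg_{\Sym\liealg g^*}+\deg_{\Anti\liealg g^*})$), the canonical inclusion $\iota$, and the projection $p$ onto $\deg_{\Sym\liealg g^*}=\deg_{\Anti\liealg g^*}=0$. The side conditions $h^2=0$, $h\iota=0$, $p h=0$ are immediate bidegree checks: $\inss(e_l)$ and $\inss(e_m)$ commute on $\Sym\liealg g^*$ while $e^l\wedge e^m$ is antisymmetric, $\iota$ lives in $\Sym\liealg g^*$-degree zero, and the image of $h$ sits in $\Anti^{\geq 1}\liealg g^*$.

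Next I would perturb the right-hand differential by $\delta:=[\pi_\KKS,\argument]$, obtaining the total differential $[\pi_\KKS - J,\argument]$. The crux of the argument is the local nilpotency of $h\delta$: by the explicit formula~\eqref{eq:DiffPiKKSR}, $\delta$ preserves the $\Sym\liealg g^*$-degree, whereas $h$ strictly lowers it by one, so $(h\delta)^{n+1}$ annihilates elements of $\Sym\liealg g^*$-degree at most $n$. The same observation gives the decisive cancellation: since $\iota$ lands in $\Sym\liealg g^*$-degree zero, already $h\delta\iota=0$, so the perturbed inclusion $\sum_{n\geq 0}(h\delta)^n\iota$ collapses to $\iota$ itself, and the perturbed differential on the Cartan side reduces to the single term $p\,\delta\,\iota$.

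Finally I would identify this term with $\del$: by \eqref{eq:DiffPiKKSR}, $\delta\iota(P\tensor X)=\delta_\CE(P\tensor X)+\del(P\tensor X)$, the Chevalley--Eilenberg piece lies in $\Anti^{\geq 1}\liealg g^*$ and is killed by $p$, so $p\,\delta\,\iota=\del$. The homological perturbation lemma then yields an SDR
\begin{equation*}
  \begin{tikzcd}
    (T_\Cart(C),\del)
    \arrow[r, shift left, "\iota"]
    & (T_\Tay(C\times\liealg g^*), [\pi_\KKS - J,\argument])
    \arrow[l, shift left, "p'"]
    \arrow[loop right, "h'"]
  \end{tikzcd}
\end{equation*}
with the same inclusion $\iota$, so in particular $\iota$ is a quasi-isomorphism of cochain complexes; combined with Proposition~\ref{prop:bracketpiminusr} it is a quasi-isomorphism of DGLAs. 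The main obstacle is really bookkeeping: one must verify that the SDR side conditions and the local nilpotency cooperate cleanly, but all of this reduces to the single fact that $[\pi_\KKS,\argument]$ preserves the $\Sym\liealg g^*$-filtration underlying~$h$.
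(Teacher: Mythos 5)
Your argument is correct, but it takes a genuinely different route from the paper. The paper treats $T_\Tay(C\times\liealg{g}^*)$ as a double complex, observing that $[-J,\argument]$ and $[\pi_\KKS,\argument]$ have complementary bidegrees with respect to the $\Sym\liealg{g}^*$-grading, and concludes via the associated spectral sequence: by Proposition~\ref{prop:cohomologytrivialbracketj} the $[-J,\argument]$-cohomology is the Cartan model, on which $[\pi_\KKS,\argument]$ restricts to $\del$, so $\iota$ is an isomorphism on the first sheet and hence on cohomology. You instead promote Proposition~\ref{prop:cohomologytrivialbracketj} to a strong deformation retract and feed the perturbation $[\pi_\KKS,\argument]$ into the homological perturbation lemma; your decisive observations --- that $[\pi_\KKS,\argument]$ preserves the $\Sym\liealg{g}^*$-degree which $h$ strictly lowers (smallness of the perturbation), that consequently the perturbed inclusion collapses back to $\iota$, and that $p\circ[\pi_\KKS,\iota(\argument)]=\del$ because the Chevalley--Eilenberg part raises the $\Anti\liealg{g}^*$-degree and is killed by $p$ --- all check out against \eqref{eq:DiffPiKKSR}, and the same degree count that gives you local nilpotency is exactly what makes the paper's spectral sequence degenerate and converge. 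The two proofs are the two standard packagings of one filtration argument; yours is longer but more constructive, since it also delivers an explicit perturbed projection and homotopy for the total differential $[\pi_\KKS-J,\argument]$, something the paper only assembles later, and by hand, in Section~\ref{sec:Linfyquasiinverseofiota}. The one prerequisite you leave implicit is that $[\pi_\KKS-J,\argument]$ squares to zero, i.e.\ that $[\pi_\KKS,\argument]$ really is a perturbation of $-[J,\argument]$; this is the curved Maurer--Cartan property of $\pi_\KKS$ recorded in \eqref{eq:PiKKSRJ} and established just before the proposition, so it costs nothing, but it deserves a citation in your write-up.
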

\begin{proof}
  Let us compute
  the cohomology of $T_\Tay(C\times \liealg{g}^*)$ by interpreting it
  as a double complex. The two differentials are $[-J,\argument]$ and
  $[\pi_\KKS ,\argument]$ and as bigrading we set
  \begin{equation*}
    C^{p,q}
    =
    (\Sym^q\liealg{g}^* \otimes \prod_{i=0}^\infty 
    (\Sym^i \liealg{g} \otimes (\Anti \liealg{g}^* \otimes 
    \Tpoly(C))^{p-q}))^\group{G}.
  \end{equation*}
  One can directly see that the differentials are compatible with the
  bigrading in the sense that
  \begin{equation*}
    [-J,\argument] \colon
    C^{p,q}
    \longrightarrow
    C^{p,q+1},
    \quad \text{and} \quad
	  [\pi_\KKS,\argument] \colon
	  C^{p,q}
	  \longrightarrow
	C^{p+1,q}.
  \end{equation*}
  By Proposition~\ref{prop:cohomologytrivialbracketj} the cohomology
  of $[-J,\argument]$ is given by $(\prod_{i=0}^\infty
  (\Sym^i\liealg{g} \otimes\Tpoly(C)))^\group{G}$, on which the
  horizontal differential $[\pi_\KKS ,\argument]$ is just $\del$.
  Thus $\iota$ is an isomorphism on the first sheet and thus on the
  cohomology.
\end{proof}

The above results show that the Cartan model is an intertwiner 
of $T_\Tay(C\times\liealg{g}^*)$ and $\Tpoly(M_\red)$, which can be 
summarized in the following diagram. 
	\begin{center}\label{Diag: Cartan}
	\begin{tikzcd}
	(T_\Tay(C\times \liealg{g}^*),[\pi_\KKS-R-J,\argument],[\argument,\argument])\\
	(T_\Cart(C),\del,[\argument,\argument])
	\arrow[u, "\iota"]
	\arrow[d, "p"]\\
	(\Tpoly(M_\red),[\argument,\argument]) .
	\arrow[u,rightsquigarrow, bend left=60, "i_\infty"]
	\end{tikzcd}
	\end{center}
So far we have shown that both $\iota$ and $p$ are DGLA morphisms and 
also quasi-isomorphisms. For convenience, we included the $L_\infty$-quasi-inverse 
$i_\infty$ of $p$. From this diagram and the fact that every 
$L_\infty$-quasi-isomorphism is quasi-invertible, we have the following:

\begin{theorem}
  \label{thm:ClassicalTredTaylor}
  There exists an $L_\infty$-quasi-isomorphism 
  \begin{align*} 
    (T_\Tay(C\times\liealg{g}^*),
	[\pi_\KKS-J,\argument],[\argument,\argument])
	\longrightarrow 
	(T_\poly(M_\red),0,[-,-]).
  \end{align*}
\end{theorem}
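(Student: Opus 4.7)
The plan is to assemble the desired $L_\infty$-quasi-isomorphism as the composition of an $L_\infty$-quasi-inverse of $\iota$ with the strict DGLA morphism $p$, both of which are available from the diagram recalled just before the theorem statement. Since $p$ is already a strict DGLA quasi-isomorphism (Proposition~\ref{prop:redfromctomred}), the work concentrates on producing an $L_\infty$-quasi-inverse of the DGLA quasi-isomorphism $\iota$ from Proposition~\ref{prop:InclusionCtoMQuis}, to which Proposition~\ref{prop:Infinityprojection} applies provided one can exhibit $\iota$ as the inclusion of a deformation retract of cochain complexes.

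First I would build such a deformation retract. The contracting homotopy $h = \inss(e_l) \otimes \id \otimes e^l\wedge\otimes \id$ used in the proof of Proposition~\ref{prop:cohomologytrivialbracketj} provides, together with the projection onto the $(\prod_{i} \Sym^i\liealg{g}\otimes \Tpoly(C))^\group{G}$-summand, a deformation retract of $(T_\Tay(C\times\liealg{g}^*),[-J,\argument])$ onto $(T_\Cart(C),0)$. To pass from here to the full differential $[\pi_\KKS - J,\argument]$, I would apply the homological perturbation lemma to the perturbation $[\pi_\KKS,\argument]$: since, by Proposition~\ref{prop:bracketpiminusr}, this perturbation restricts to $\del$ on $T_\Cart(C)$, the perturbed retract has the desired form with $\iota$ as the chain-level inclusion. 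After possibly adjusting $h$ to restore the side conditions $h^2 = 0$, $h\circ\iota = 0$, $\pi\circ h = 0$ (standard small modifications), one obtains a genuine deformation retract
\begin{equation*}
\begin{tikzcd}
(T_\Cart(C),\del) \arrow[rr, shift left, "\iota"] &&
(T_\Tay(C\times\liealg{g}^*),[\pi_\KKS-J,\argument])
\arrow[ll, shift left, "\pi"] \arrow[loop right, "h"]
\end{tikzcd}
\end{equation*}
in which $\iota$ is a DGLA morphism.

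Second, I would invoke Proposition~\ref{prop:Infinityprojection} with $(A,\D_A) = (T_\Cart(C),\del)$ and $(B,\D_B) = (T_\Tay(C\times\liealg{g}^*),[\pi_\KKS - J,\argument])$. This yields an explicit $L_\infty$-quasi-isomorphism $P \colon T_\Tay(C\times\liealg{g}^*) \to T_\Cart(C)$ with $P_1 = \pi$ and higher Taylor coefficients $P_{k+1}^1 = L_{\infty,k+1}(P) \circ H_{k+1}$, which is quasi-inverse to $\iota$ and in particular a quasi-isomorphism of the underlying chain complexes. Composing the coalgebra morphisms $P$ with the strict morphism induced by $p$ then produces the required $L_\infty$-quasi-isomorphism $\tilde{\mathrm{T}}_\red := p \circ P \colon T_\Tay(C\times\liealg{g}^*) \to T_\poly(M_\red)$, whose first Taylor coefficient is $p \circ \pi$, a composition of quasi-isomorphisms and hence itself a quasi-isomorphism.

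The hard part is Step~1: verifying that one really does have a well-behaved deformation retract with the perturbed differential $[\pi_\KKS - J,\argument]$, that is, checking convergence/finiteness of the perturbation series (which follows from the strict filtration by the Chevalley--Eilenberg and vertical degrees used in Proposition~\ref{prop:InclusionCtoMQuis}), and ensuring that the side conditions on $h$ survive after perturbation so that Proposition~\ref{prop:Infinityprojection} can be applied verbatim. Once this is in place, Steps~2 and~3 are formal and yield the theorem.
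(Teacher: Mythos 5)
Your proposal is correct and follows the same overall strategy as the paper — produce an $L_\infty$-quasi-inverse $P$ of the DGLA quasi-isomorphism $\iota$ and compose with the strict morphism $p$ — but the way you manufacture $P$ is genuinely different. The paper's proof of the theorem as stated is a one-liner: it simply invokes the abstract fact that every $L_\infty$-quasi-isomorphism admits a quasi-inverse and applies it to $\iota$ from Proposition~\ref{prop:InclusionCtoMQuis}. When the paper later makes this explicit (Section~\ref{sec:Linfyquasiinverseofiota}), it deliberately does \emph{not} perturb the retract: because the homotopy $h$ of $[-J,\argument]$ fails to commute with the Chevalley--Eilenberg part of $[\pi_\KKS,\argument]$, the authors first apply Proposition~\ref{prop:Infinityprojection} to the unperturbed retract $(T_\Cart(C),0)\rightleftarrows(T_\Tay(C\times\liealg{g}^*),[-J,\argument])$ and then verify by a degree-counting argument (Proposition~\ref{prop:PinftyquiswithPiKKS}) that the resulting $P$ happens to intertwine the full differentials $[\pi_\KKS-J,\argument]$ and $\del$. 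You instead feed the perturbation $[\pi_\KKS,\argument]$ into the homological perturbation lemma first and then apply Proposition~\ref{prop:Infinityprojection} to the perturbed retract. This works: the perturbation is small because $[\pi_\KKS,\argument]\circ h$ strictly lowers the $\Sym\liealg{g}^*$-degree, so the geometric series terminates on homogeneous elements; moreover the perturbed inclusion really is still $\iota$ and the induced differential on $T_\Cart(C)$ really is $\del$, both because $h\circ[\pi_\KKS,\argument]\circ\iota=0$ (the image of $[\pi_\KKS,\iota(\argument)]$ has $\Sym\liealg{g}^*$-degree zero, on which $h$ vanishes) — a point you assert but should verify, since it is exactly what makes your Step~1 close up; and the side conditions survive perturbation automatically here rather than needing adjustment. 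What your route buys is systematics (no ad hoc compatibility check with $[\pi_\KKS,\argument]$); what the paper's route buys is a simpler closed formula for $P$, built from $h$ rather than from the perturbed homotopy $h(\id+[\pi_\KKS,\argument]\circ h)^{-1}$, which is what the authors actually use downstream in Propositions~\ref{prop:PinftyFormalCurved} and \ref{prop:ClassicalComparison}.
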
   
Note that the KKS Poisson structure is not defined on 
$M$, but just in an open 
neighbourhood of $C$. Recall that we aim to find a 
curved $L_\infty$-morphism 
	\begin{align*}
	\mathrm{T}_\red
	\colon 
	(T_\liealg{g} (M),\lambda,-[J,\argument],[\argument,\argument])
	\longrightarrow 
	(\Tpoly(M_\red),0,0,[\argument,\argument])
	\end{align*}
and its formal correspondence. To achieve this we proceed in the following way:
we construct a (non-curved) quasi-inverse of $\iota$ in Diagram~\eqref{Diag: Cartan} 
denoted by $P$ and then twist it by $-\pi_\KKS$ in order to find a curved morphism 
\begin{align*}
P^{-\pi_\KKS}
\colon
(T_\Tay(C\times\liealg{g}^*),\lambda,-[J,\argument],[\argument,\argument])
\longrightarrow
(\Tpoly(M_\red),\lambda_\red,[\pi_{\KKS,\red},\argument],[\argument,\argument])
\end{align*}
for 
	\begin{align*}
	\lambda_\red
	:=
	\sum_{k\geq 0}\frac{(-1)^k}{k!}P_{1+k}(\lambda\vee \pi_\KKS^{\vee k})
	\end{align*}
and 
	\begin{align*}
	\pi_{\KKS,\red}
	:=
	\sum_{k\geq 0}\frac{(-1)^k}{k!}P_{k}(\pi_\KKS^{\vee k}).
	\end{align*}
There are now two issues with this approach:
	\begin{itemize}
	\item Since we did not introduce a complete filtration on the involved DGLAs,
		we have to check by hand that both of the series actually converge in a 
		suitable sense.
	\item This is actually not what we want, since our target, i.e. 
		$\Tpoly(M_\red)$, has to have zero curvature and zero differential. 
	\end{itemize}
This two problems are solved in Section~\ref{sec:Linfyquasiinverseofiota}, 
where we construct a quasi inverse of $\iota$ such that 
$\lambda_\red=\pi_{\KKS,\red} = 0$ and we show that 
the series are well-defined. But at first we need to extend 
our considerations to the formal setting, where we have 
a complete filtration by $\hbar$.

%
% subsection: formal setting
%

\subsection{Formal Equivariant Multivector Fields and Their Reduction}
\label{subsec:FormalmultivectorFieldsandReduction}

We want to consider the formal analogue of the 
equivariant equivariant multivector fields on $M$ from 
Eq.~\eqref{eq:curvedequitpolywithfixedJ}. Since we 
are only interested in formal Maurer-Cartan elements, 
we have to rescale the curvature by
$\hbar$, i.e. we consider the curved DGLA
\begin{equation*}
  ( (\Sym\liealg{g}^* \otimes \Tpoly(M))^\group{G}[[\hbar]], 
	\hbar\lambda , -[J,\argument],
	[\argument,\argument]).
\end{equation*}
A formal curved Maurer-Cartan elements $\hbar(\pi - J')
\in \hbar (\Sym\liealg{g}^* \otimes \Tpoly(M))^\group{G}[[\hbar]]$ corresponds 
to an invariant formal Poisson structure $\pi$ with formal momentum map 
$J + \hbar J'$. 

The Taylor series expansion discussed in
Section~\ref{sec:TaylorSeriesofTpoly1} allows us to interpret the
element $\hbar\pi_\KKS$ as a formal curved Maurer-Cartan
element. Thus we can perform the twisting procedure, yielding the
following flat DGLA:
\begin{align*}
  \left(T_\Tay(C\times\liealg{g}^*)[[\hbar]], [\hbar\pi_\KKS  - J, \argument], 
	[\argument,\argument]\right).
\end{align*} 
For a formal Maurer-Cartan element $\hbar(\pi - J')$ one can check
that $\pi_\KKS  + \pi$ is a $G$-invariant formal Poisson structure
with formal momentum map $J + \hbar J'$ as desired and again $\pi =
\pi_C + \mathcal{O}(\hbar)$.  Moreover, the Cartan model for the
multivector fields reads in the formal setting:
\begin{equation*}
  \left(T_\Cart(C)[[\hbar]], \hbar \del, 
	[\argument,\argument]\right)
\end{equation*}
and the bracket on $\Tpoly(M_\red)[[\hbar]]$ is simply extended
$\hbar$-bilinearly. Summarizing, we have the following claim.
\begin{theorem}
\label{thm:Tpolyredformal}
	We have built the following diagram
	\begin{equation*}
	\begin{tikzcd}
		  \left(T_\Tay(C\times\liealg{g}^*)[[\hbar]], 
			[\hbar\pi_\KKS  - J, \argument], 	[\argument,\argument]\right) \\
		  \left(T_\Cart(C)[[\hbar]], \hbar \del, 
	    [\argument,\argument]\right)
			\arrow[d,swap, "p"] 
			\arrow{u}{\iota}  \\
 	    (\Tpoly(M_\red)[[\hbar]],0,[\argument,\argument]), 
	\end{tikzcd}
\end{equation*}
where both maps are DGLA morphisms and where $\iota$ is still 
a quasi-isomorphism of DGLAs.
\end{theorem}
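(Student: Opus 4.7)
The plan is to establish in the formal setting both the DGLA-morphism property of $\iota$ and $p$ and the quasi-isomorphism property of $\iota$, by upgrading the classical statements of Propositions~\ref{prop:redfromctomred}, \ref{prop:bracketpiminusr} and \ref{prop:InclusionCtoMQuis} with careful tracking of the rescaling by $\hbar$.

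For $p$, one simply extends the classical DGLA morphism $\hbar$-linearly: since $p\circ\del=0$ already holds, also $p\circ(\hbar\del)=0$, so $p$ is compatible with the rescaled differentials, and bracket compatibility is immediate. For $\iota$, the essential observation is that the image of $\iota$ lies in the subspace with no $\Sym\liealg{g}^*$-factor, so that $[J,\iota(x)]=0$ by inspection of the explicit bracket formulas; combined with $[\pi_\KKS,\iota(x)]=\iota(\del x)$ from Proposition~\ref{prop:bracketpiminusr}, this yields
\begin{equation*}
[\hbar\pi_\KKS-J,\iota(x)]\;=\;\hbar[\pi_\KKS,\iota(x)]\;=\;\iota(\hbar\del x),
\end{equation*}
as required. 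The rescaled curved Maurer--Cartan identity $[\hbar\pi_\KKS,J]=\hbar\lambda$ cancels the curvature $\hbar\lambda$ of the original curved DGLA on $T_\Tay(C\times\liealg{g}^*)[[\hbar]]$, so that $[\hbar\pi_\KKS-J,\argument]^2=0$ and the target is genuinely flat.

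To prove $\iota$ is a quasi-isomorphism I would mirror the double-complex argument of Proposition~\ref{prop:InclusionCtoMQuis}, now with $\hbar\pi_\KKS$ in place of $\pi_\KKS$. Equip $T_\Tay(C\times\liealg{g}^*)[[\hbar]]$ with the bigrading
\begin{equation*}
C^{p,q}\;=\;\bigl(\Sym^q\liealg{g}^*\otimes\prod_{i=0}^\infty(\Sym^i\liealg{g}\otimes(\Anti\liealg{g}^*\otimes\Tpoly(C))^{p-q})\bigr)^\group{G}[[\hbar]],
\end{equation*}
on which $[-J,\argument]$ has bidegree $(0,1)$ and $[\hbar\pi_\KKS,\argument]$ has bidegree $(1,0)$. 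Filter by $p$-degree and pass to the $E_1$-page: its rows are the cohomology of $[-J,\argument]$, and by Proposition~\ref{prop:cohomologytrivialbracketj} (whose contracting homotopy $h=\inss(e_l)\otimes\id\otimes e^l\wedge\otimes\id$ is $\hbar$-independent and therefore extends verbatim) this is $(\prod_{i=0}^\infty\Sym^i\liealg{g}\otimes\Tpoly(C))^\group{G}[[\hbar]]$, concentrated in the single row $q=0$. On that row the induced horizontal differential is exactly $\hbar\del$, identifying the $E_1$-page with the source $(T_\Cart(C)[[\hbar]],\hbar\del)$, compatibly with $\iota$. The spectral sequence collapses at $E_2$, so $\iota$ induces an isomorphism on cohomology.

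The main obstacle is justifying the spectral-sequence machinery given both the $\hbar$-adic topology and the infinite product $\prod_i\Sym^i\liealg{g}$ hidden in $T_\Tay$; the saving grace is that only the $E_0$ and $E_1$ pages are ever computed before collapse, so no delicate convergence issues arise. A secondary point to double-check is that $\hbar\pi_\KKS$ lies in a positive filtration level so that the twisting procedure from Section~\ref{sec:Preliminaries} makes unconditional sense, which is precisely why the rescaling by $\hbar$ was introduced.
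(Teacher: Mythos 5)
Your proposal is correct and follows essentially the same route as the paper: the paper's proof likewise observes that $[-J,\argument]$ is not rescaled by $\hbar$ (so the contracting homotopy of Proposition~\ref{prop:cohomologytrivialbracketj} applies verbatim and the vertical cohomology is concentrated in the Cartan model, where $[\hbar\pi_\KKS,\argument]$ induces $\hbar\del$), and then invokes the double-complex argument of Proposition~\ref{prop:InclusionCtoMQuis}. Your additional checks of the DGLA-morphism properties and of the flatness identity $[\hbar\pi_\KKS,J]=\hbar\lambda$ are consistent with what the paper establishes in Propositions~\ref{prop:redfromctomred} and \ref{prop:bracketpiminusr}.
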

\begin{proof}
	The proof essentially follows from the above considerations.
        More explicitely, the inclusion of the Cartan model into
        $T_\Tay(C\times \liealg{g}^*)[[\hbar]]$ is a quasi-isomorphism
        of DGLAs since the bracket with $[-J,\argument]$ is not scaled
        by $\hbar$ and $[\hbar \pi_\KKS,\argument]$ is just
        $\hbar\del$ in the cohomology of $[-J,\argument]$. In other
        words, the argument from
        Proposition~\ref{prop:InclusionCtoMQuis} applies.
\end{proof}
Note that here we only use the fact that the $L_\infty$-quasi-inverse
of $\iota$ exists. In Section~\ref{sec:Linfyquasiinverseofiota} we
give an explicit formula for this map.
\begin{remark}[Laurent series]
  We observe that the map $p$ in the above theorem is \emph{not} a
  quasi-isomorphism due to the scaling problem by $\hbar$. Concerning
  the projection from the Cartan model to $M_\red$ we still have the
  map $h$ satisfying $ \hbar\del h + h \hbar \del =
  \hbar(\deg_\liealg{g} + \deg_\ver) \id, $ as in
  Proposition~\ref{prop:redfromctomred}. However, since we are not
  allowed to divide by $\hbar$, the projection $\pr$ in the formal
  setting is no longer a quasi-isomorphism. We remark that, if we
  consider instead Laurent series in $\hbar$ in all the complexes,
  e.g. $T_\poly(M_\red)[\hbar^{-1},\hbar]]$, then it remains a
    quasi-isomorphism.
\end{remark}
Moreover, we know from \cite[Thm.~4.6]{kontsevich:2003a} that 
$L_\infty$-quasi-isomorphisms 
induce bijections on the equivalence classes of formal Maurer-Cartan 
elements. In our setting this yields:
\begin{corollary}
  \label{cor:EquivtoTrivialMC}
  Every formal Maurer-Cartan element $\hbar(\pi - J')$ in
  $T_\Tay(C\times \liealg{g}^*)[[\hbar]]$ is equivalent to a formal
  Maurer-Cartan element $\hbar\pi_C\in T_\Cart(C)^1[[\hbar]] 
	\subset T_\Tay^1(C\times\liealg{g}^*)[[\hbar]]$.
\end{corollary}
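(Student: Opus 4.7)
The plan is to deduce the corollary directly from Kontsevich's theorem cited just above, applied to the DGLA quasi-isomorphism $\iota$ established in Theorem~\ref{thm:Tpolyredformal}. That theorem provides
\[
  \iota \colon \bigl(T_\Cart(C)[[\hbar]], \hbar\del, [\argument,\argument]\bigr)
  \longrightarrow
  \bigl(T_\Tay(C\times\liealg{g}^*)[[\hbar]], [\hbar\pi_\KKS - J,\argument], [\argument,\argument]\bigr)
\]
as a morphism of flat DGLAs that is a quasi-isomorphism of underlying cochain complexes. Both sides carry the complete $\hbar$-adic filtration $\mathcal{F}^k = \hbar^k(\argument)$, with respect to which $\iota$ is strict, and formal Maurer--Cartan elements are by convention those lying in $\mathcal{F}^{\geq 1}$, as set up in Section~\ref{sec:Preliminaries}.

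Next I would invoke the general fact that an $L_\infty$-quasi-isomorphism between complete filtered $L_\infty$-algebras induces a bijection on gauge equivalence classes of Maurer--Cartan elements, which is precisely the content of the Kontsevich theorem cited just before the corollary. Applied to $\iota$, this yields a bijection
\[
  \iota_\ast \colon MC\bigl(T_\Cart(C)[[\hbar]]\bigr)/{\sim}
  \longrightarrow
  MC\bigl(T_\Tay(C\times\liealg{g}^*)[[\hbar]]\bigr)/{\sim}.
\]
Surjectivity of $\iota_\ast$ then says that every MC element $\hbar(\pi - J')$ of the flat DGLA on $T_\Tay(C\times\liealg{g}^*)[[\hbar]]$ is gauge equivalent, inside this DGLA, to $\iota(\eta)$ for some MC element $\eta$ of the Cartan-model DGLA. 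Writing such an $\eta$ in the form $\hbar\pi_C$ with $\pi_C \in T_\Cart(C)^1[[\hbar]]$, the Maurer--Cartan equation $\hbar\del(\hbar\pi_C) + \tfrac12 [\hbar\pi_C,\hbar\pi_C] = 0$ reduces, after dividing by $\hbar^2$, to $\del\pi_C + \tfrac12[\pi_C,\pi_C] = 0$, which produces exactly the element asserted in the statement.

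The only mild subtlety I anticipate is bookkeeping around the $\hbar$-rescaling of the Cartan-model differential to $\hbar\del$, which one has to track when transporting MC elements along $\iota$; but since every MC element we consider is already of positive $\hbar$-order, the rescaling is absorbed cleanly and introduces no new issues. I do not expect a serious obstacle: once the DGLA quasi-isomorphism $\iota$ of Theorem~\ref{thm:Tpolyredformal} is in hand, the corollary is a purely formal consequence via Kontsevich's theorem. An explicit witness of the equivalence, though not needed for this proof, would be produced by the $L_\infty$-quasi-inverse of $\iota$ to be constructed in Section~\ref{sec:Linfyquasiinverseofiota}.
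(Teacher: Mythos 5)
Your proposal is correct and is essentially the paper's own argument: the corollary is deduced by applying Kontsevich's theorem on bijections of Maurer--Cartan equivalence classes to the DGLA quasi-isomorphism $\iota$ of Theorem~\ref{thm:Tpolyredformal}, with surjectivity of the induced map giving the claim. You have merely spelled out the filtration and $\hbar$-rescaling bookkeeping that the paper leaves implicit.
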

In other words, the above Corollary states that every formal Poisson
structure $\pi_\KKS  + \pi$ with formal momentum map $J+ \hbar J'$
is equivalent to a formal Poisson structure $\pi_\KKS  + \pi_C$ with
undeformed momentum map $J$.
Finally, we can construct an explicit equivalence transformation from
a generic Maurer-Cartan element $\hbar(\pi - J')$ to one with $J'= 0$.
Set $X^1_\hbar = \hbar J'_i e^i$ and $J'^2_i = \exp(X^1_\hbar)(J_i) -
J_i - \hbar J'_i$.  One can recursively define for $k \geq 1$
\begin{equation}
  X_\hbar^{k+1} 
  =
  - J'^{k+1}_i e^i
  \coloneqq
  -\left( \exp(X_\hbar^k)\cdots \exp(X^1_\hbar) (J_i) - J_i - \hbar J'_i \right) 
  e^i.
\end{equation}
\begin{proposition}
  \label{prop:ExplicitEquivalenceXinfty}
  Let $\hbar(\pi - J')$ be a formal Maurer-Cartan element in 
	$T_\Tay(C\times\liealg{g}^*)[[\hbar]]$. 
	Then 
	\begin{equation}
	  X^\infty_\hbar
		=
		\log \left(\lim_{k \rightarrow \infty} 
		 \exp(X^k_\hbar) \cdots \exp(X^1_\hbar) \right)
	\end{equation} 
	satisfies $\exp(X^\infty_\hbar)(J_i) = J_i +\hbar J'_i$ and hence 
	$\hbar\exp(-X^\infty_\hbar)(\pi_\KKS  +\pi) - \hbar\pi_\KKS $ is a formal 
	Maurer-Cartan 
	element in $T_\Tay(C\times\liealg{g}^*)[[\hbar]]$ equivalent to $\hbar(\pi- J')$.
\end{proposition}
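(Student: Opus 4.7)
The plan is to argue by induction on the $\hbar$-adic order, showing that successive iterates $X^k_\hbar$ and discrepancies $J'^k_i$ lie in strictly deeper pieces of the $\hbar$-adic filtration of $\Cinfty(M)[[\hbar]]$. This guarantees that the infinite composition converges in the $\hbar$-adic topology, so that $X^\infty_\hbar$ is well defined; the claimed identity for $J$ then follows by telescoping, and the Maurer--Cartan statement is a direct consequence of the gauge action on the twisted flat DGLA.

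Concretely, I would first prove by induction on $k$ that
\begin{equation*}
  J'^{k+1}_i \in \hbar^{k+1}\,\Cinfty(M)[[\hbar]]
  \quad\text{and, consequently,}\quad
  X^{k+1}_\hbar \in \hbar^{k+1}\,\Sym^1\liealg{g}^{*}\otimes \Cinfty(M)[[\hbar]].
\end{equation*}
The base case $k=1$ follows from the definition $X^1_\hbar = \hbar J'_i e^i$: expanding $\exp(X^1_\hbar)(J_j) = J_j + [X^1_\hbar, J_j] + O(\hbar^2)$ and using the elementary identity $[Y_\ell e^\ell, J_j] = Y_j$ (i.e.\ the linear action of an element of $\Sym^1\liealg{g}^*\otimes\Cinfty(M)$ on $J_j$ extracts its $j$-th component), one sees that the $\hbar^1$-term is precisely $\hbar J'_j$, so $J'^2_j \in \hbar^2\Cinfty(M)[[\hbar]]$. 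The inductive step uses that $\exp(X^{k+1}_\hbar) = \id + [X^{k+1}_\hbar, \argument] + O(\hbar^{2(k+1)})$ together with the sign choice $X^{k+1}_\hbar = -J'^{k+1}_i e^i$, which is engineered so that its first-order action cancels the leading $\hbar^{k+1}$-part of the previously accumulated discrepancy. All remaining terms are of order $\hbar^{k+2}$ or higher.

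With the filtration statement in hand, the infinite product $\lim_{k\to\infty}\exp(X^k_\hbar)\cdots\exp(X^1_\hbar)$ assembles via the completed Baker--Campbell--Hausdorff formula into a well-defined automorphism, and $X^\infty_\hbar$ is defined by taking its logarithm. The equality $\exp(X^\infty_\hbar)(J_i) = J_i + \hbar J'_i$ is then immediate, since for every $N$ the $N$-th partial product agrees with $J_i + \hbar J'_i$ modulo $\hbar^{N+1}$. For the final claim, the adjoint action of $\exp(-X^\infty_\hbar)$ defines a gauge equivalence in the twisted flat DGLA $(T_\Tay(C\times\liealg{g}^*)[[\hbar]],[\hbar\pi_\KKS - J,\argument],[\argument,\argument])$ from Theorem~\ref{thm:Tpolyredformal}; the identity on $J$ just proved forces the momentum-map component of the transformed Maurer--Cartan element $\hbar\exp(-X^\infty_\hbar)(\pi_\KKS+\pi) - \hbar\pi_\KKS$ to be trivial, so that the resulting element is gauge-equivalent to $\hbar(\pi-J')$ and purely a deformation of the Poisson structure.

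The main obstacle is the careful verification of the leading-term identity $[Y_\ell e^\ell, J_j] = Y_j$ in the DGLA bracket of $T_\Tay(C\times\liealg{g}^*)[[\hbar]]$, which is what makes the iterative ansatz reduce $\hbar$-order by exactly one at every step; once this algebraic input is established, the remainder of the proof is routine $\hbar$-adic bookkeeping combined with the standard gauge-equivalence formalism for Maurer--Cartan elements.
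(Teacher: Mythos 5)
Your proposal is correct and follows essentially the same route as the paper's proof: an induction showing $J'^{k+1}_i,\, X^{k+1}_\hbar \in \mathcal{O}(\hbar^{k+1})$ (driven by the fact that the first-order action of $X^{k+1}_\hbar = -J'^{k+1}_i e^i$ on $J_i$ cancels the accumulated discrepancy), $\hbar$-adic convergence of the infinite product, the telescoping identity $\exp(X^\infty_\hbar)(J_i) = J_i + \hbar J'_i$, and the gauge-action computation $\exp(-X^\infty_\hbar)\acts\hbar(\pi-J') = \hbar\exp(-X^\infty_\hbar)(\pi_\KKS+\pi)-\hbar\pi_\KKS$. The only presentational difference is that you make explicit the leading-term identity $[Y_\ell e^\ell, J_j]=Y_j$, which the paper leaves implicit in the step $X^k_\hbar(J_i) = -J_i'^k$.
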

\begin{proof}
  Note that $X^1_\hbar \in \mathcal{O}(\hbar)$ and inductively one
  gets
  \begin{align*}
	  J_i + \hbar J_i'  + J_i'^{k+1}
		& =
    \exp(X^{k}_\hbar)\exp(X^{k-1}_\hbar)\cdots\exp(X^1_\hbar)(J_i) 
    =
    \exp(X^{k}_\hbar)(J_i +\hbar J'_i + J'^{k}_i) \\
		& = 
		J_i + \hbar J'_i + J_i'^k + X^k_\hbar(J_i) + \mathcal{O}(\hbar^{k+1}).
  \end{align*}
  Hence $J'^{k+1}_i  \in \mathcal{O}(\hbar^{k+1})$ as well as 
  $X^{k+1}_\hbar \in \mathcal{O}(\hbar^{k+1})$. In particular, $X^\infty_\hbar$ is 
  well-defined and satisfies 
  \begin{align*}
    \exp(X_\hbar^\infty) (J_i)
    =
    J_i +\hbar J'_i + \lim_{k \rightarrow \infty} J'^k_i
    =
    J_i + \hbar J'_i
  \end{align*}
  in the $\hbar$-adic topology. The gauge equivalence $\exp(-X_\hbar^\infty)$ therefore 
	maps $\hbar(\pi-J')$ to 
	\begin{align*}
	  \exp(-X_\hbar^\infty) \acts \hbar(\pi-J')
		& =
		\exp(-X_\hbar^\infty)(\hbar\pi_\KKS  -J + \hbar(\pi - J')) 
		- (\hbar\pi_\KKS -J) \\
		& =
		\hbar\exp(-X^\infty_\hbar)(\pi_\KKS  +\pi) - \hbar\pi_\KKS ,
	\end{align*}
	compare \cite[Prop.~6.2.34]{waldmann:2007a} for a formula of the gauge action.
\end{proof}

\subsection{$L_\infty$-quasi-inverse of $\iota$}
\label{sec:Linfyquasiinverseofiota}

Finally, we want to find an explicit description of the
$L_\infty$-quasi-inverse of $\iota$, i.e. an
$L_\infty$-quasi-isomorphism 
\begin{align*}
  P
  \colon
  (T_\Tay(C\times\liealg{g}^*),[\pi_\KKS -J,\argument],[\argument,\argument])
  \longrightarrow
  (\Tpoly(M_\red),0,[\argument,\argument]).
\end{align*}
One can check that the homotopy $h$ of
$[-J,\argument]$ from Proposition~\ref{prop:cohomologytrivialbracketj}
does not commute with $[\pi_\KKS, \argument]$. The idea is to start 
with $[-J,\argument]$ as differential on the
Taylor decomposition and zero differential on the Cartan model,
construct the $L_\infty$-quasi-isomorphism $P$ in this case, and then
investigate the compatibility with $[\pi_\KKS,\argument]$.

Let us focus on the following deformation retract of DGLA's
\begin{equation}
  \begin{tikzcd} 
	(T_\Cart(C),0)
	\arrow[rr, shift left, "i"] 
  &&   
  (T_\Tay(C\times\liealg{g}^*),[-J,\argument])
	\arrow[ll, shift left, "p"]
	\arrow[loop right, "h"] 
\end{tikzcd} 
\end{equation}
and apply the construction from Section~\ref{sec:ExplicitFormulasDefRetracts}.
By Proposition~\ref{prop:Infinityprojection} we have an
$L_\infty$-quasi-isomorphism $P$ given by $P_1 = p$ and
\begin{equation}
  \label{eq:Pinftygeneral}
  P_n
	=
	P_n^1 
	=
	(R_2^1P_n^2 -  P_{n-1}^1 Q_n^{n-1}) \circ H_n,
\end{equation}
where $Q$ and $R$ denote the $L_\infty$-structure on
$\Sym(T_\Tay(C\times\liealg{g}^*)[1])$ and on $\Sym(T_\Cart(C)[1])$, 
respectively. Moreover, $H_n$ is the extension of
\begin{align*}
  \begin{split}
	  h ( \xi \otimes P \otimes \alpha \otimes X)
	  =
	  \begin{cases}
	    \frac{-1}{\deg_{S\liealg{g}^*}\xi + \deg_{\Anti\liealg{g}^*}\alpha}
	    \inss(e_\ell)\xi \otimes P \otimes e^\ell	\wedge \alpha \otimes X 
	    \quad \; & 
	    \text{if} \deg_{S\liealg{g}^*}\xi + \deg_{\Anti\liealg{g}^*}\alpha \neq 0 \\
	    0 \quad \quad \quad \; &\text{else}
	  \end{cases}
	\end{split}
\end{align*}  
since $Q_1^1 = [J,\argument]$, compare Proposition~\ref{prop:cohomologytrivialbracketj}.
\begin{lemma}
  For $n=2$ one has
	\begin{equation}
	  P_2 (X_1 \vee X_2)
		=
		- p((-1)^{\abs{X_1}}[hX_1,X_2 ] - [X_1 ,hX_2])
	\end{equation}
	for all homogeneous $X_1,X_2 \in T_\Tay(C\times\liealg{g}^*)[1]$.
\end{lemma}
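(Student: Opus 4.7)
The plan is to specialize the recursive formula $P^1_{k+1} = L_{\infty,k+1} \circ H_{k+1}$ from Proposition~\ref{prop:Infinityprojection} to the case $k=1$, so that the claim reduces to an explicit evaluation of $L_{\infty,2} \circ H_2$ on a generic element $X_1 \vee X_2$.

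First I would unfold $H_2$ via the presentation $H_2 = \tilde H_2 \circ K_2$ from Section~\ref{sec:ExplicitFormulasDefRetracts}. A short calculation of the averaging operator gives
\begin{equation*}
  K_2(X_1 \vee X_2)
  =
  \tfrac{1}{2}\bigl(X_1 \vee X_2 + ipX_1 \vee X_2 + X_1 \vee ipX_2\bigr),
\end{equation*}
and then $\tilde H_2$ introduces a single factor of $h$ in each summand. Crucially, the side condition $h\circ i = 0$ immediately kills the terms of the form $h(ipX_j)$, so $H_2(X_1 \vee X_2)$ collapses to a manageable four-term expression involving exactly one $h$-factor per summand.

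Next I would apply the explicit formula
\begin{equation*}
  L_{\infty,2}(Y_1 \vee Y_2)
  =
  Q^1_{A,2}(pY_1 \vee pY_2) - p\circ Q^1_{B,2}(Y_1 \vee Y_2)
\end{equation*}
to each summand. The key simplification is $p\circ h = 0$: since every summand of $H_2$ contains an $h$-factor, the $Q^1_{A,2}(p\vee p)$ contribution vanishes on each term, and only the bracket contribution $-p\circ Q^1_{B,2}$ survives, producing four terms of the form $\pm p[\,\cdot\,,\,\cdot\,]_B$.

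The main obstacle is collapsing these four surviving terms to the two terms on the right-hand side: the two ``correction'' summands coming from the $ip$-averaging in $K_2$ must be absorbed by the two ``main'' summands. For this I would exploit that $i$ is a DGLA morphism together with $p\circ i = \id$, which gives the identity $L_{\infty,2}(iZ_1 \vee iZ_2) = 0$; writing $X_j = ipX_j + (\id - ip)X_j$ and invoking the homotopy identity $\id - ip = \D_B h + h\D_B$ lets one rewrite the corrections so that they combine with the main terms. The delicate point is the sign bookkeeping, which must track the degree shift $|hX| = |X|-1$ in $B[1]$ together with the Koszul convention $Q_2(\gamma \vee \mu) = -(-1)^{|\gamma|}[\gamma,\mu]$ from Example~\ref{ex:curvedlie}; once this is done, the claimed formula $-p((-1)^{|X_1|}[hX_1, X_2] - [X_1, hX_2])$ emerges.
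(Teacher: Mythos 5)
Your reduction to $P_2^1 = L_{\infty,2}\circ H_2$, your computation of $K_2$ and $H_2$, and your observation that $p\circ h=0$ kills the $Q_{A,2}^1\circ P_2^2$ contribution all agree with the paper's computation; up to that point you correctly arrive at the four-term expression
\begin{equation*}
  P_2(X_1\vee X_2)
  =
  -\frac{1}{2}\, p\Bigl((-1)^{\abs{X_1}}[hX_1,\, X_2+ipX_2] - [X_1+ipX_1,\, hX_2]\Bigr).
\end{equation*}
The gap is in your last step. Absorbing the $ip$-correction terms amounts to showing $p[hX_1,(\id-ip)X_2]=0$, i.e. $p[hX_1,\,\D_B hX_2 + h\D_B X_2]=0$ (plus the analogous identity with the roles of $X_1$ and $X_2$ exchanged), and this is \emph{not} a consequence of the abstract retract axioms: $p$ is not assumed compatible with the bracket, and the identity $L_{\infty,2}(iZ_1\vee iZ_2)=0$ you invoke never applies here, since no summand of $H_2(X_1\vee X_2)$ has both tensor factors in the image of $i$ (each contains an $h$-factor, and $hX=iZ$ would force $Z=piZ=phX=0$). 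For a generic deformation retract the symmetrized four-term expression is the end of the story, and the lemma as stated would be false.

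What actually closes the argument --- and the reason the lemma is phrased only for homogeneous elements of $T_\Tay(C\times\liealg{g}^*)[1]$ --- is the $\liealg{g}^*$-degree structure of this particular retract. For homogeneous $X_2$ either $\deg_{\Sym\liealg{g}^*}X_2 + \deg_{\Anti\liealg{g}^*}X_2 = 0$, in which case $X_2=ipX_2$, hence $hX_2=0$ and $[hX_1,X_2+ipX_2]=2[hX_1,X_2]$ cancels the factor $\frac{1}{2}$; or this degree is positive, in which case $ipX_2=0$ and moreover $p[hX_1,X_2]=0$, because the bracket retains at least one $\liealg{g}^*$-component that $p$ annihilates, so the corresponding terms on both sides vanish. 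The same dichotomy applied to $X_1$ handles the remaining pair of terms. Replacing your general-identity argument by this degree count completes the proof.
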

\begin{proof}
  One has $P_2^2 \circ H_2 = 0$. Furthermore, for
  $Q_2^1(X_1,Y_1) = - (-1)^{\abs{X_1}}[X_1,X_2]$ with $\abs{X_1}$
  denoting the shifted degree in $T_\Tay(C\times\liealg{g}^*)[1]$ we have 
	with the formula for $H_2$, see \cite[p.~383]{loday.vallette:2012a},
  \begin{align*}
    P_2 (X_1 \vee X_2)
    & = 
    -p \circ Q_2^1 \circ H_2 (X_1\vee X_2)  \\
    & =
    -\frac{p}{2}(-(-1)^{\abs{X_1}+1}[hX_1,X_2 + ip X_2] +(-1)^{\abs{X_1}+ \abs{X_1}+1} 
    [X_1 + ip X_1,hX_2])  \\
    & =
    - p((-1)^{\abs{X_1}}[hX_1,X_2 ] - [X_1 ,hX_2]).
  \end{align*}
  The last step is easily seen for homogeneous elements by counting the
  $\liealg{g}^*$-degrees. In fact, if $X_2 = ip X_2$, then $hX_2 = 0$
  and the statement holds.  If $ipX_2 = 0$, then $p([hX_1,X_2]) = 0$
  since the bracket contains at least one $\liealg{g}^*$-component
  that is annihilated by $p$.  The same holds for $1 \leftrightarrow
  2$.
\end{proof}

As a next step we want to obtain an $L_\infty$-morphism between
$(T_\Tay(C\times\liealg{g}^*),[\pi_\KKS-J,\argument])$ and
$(T_\Cart(C),\del)$.
Let us first observe that $P_n$ contains $n-1$ brackets and $n-1$
applications of $h$, increasing the $\Anti^\bullet
\liealg{g}^*$-degrees. This implies that the $P_n$ are non-zero only
if all $n$ arguments have no $\Anti\liealg{g}^*$-contribution and the
sum of the $\Sym\liealg{g}^*$-degrees is $n-1$. As a consequence, all
$n-1$ brackets consist of pairings between
$\Anti\liealg{g}^*$-components coming from $h$ and the $\prod \Sym
\liealg{g}$-components, whereas the $\Tpoly(C)$-components are just
wedged together.  Moreover, the first term in \eqref{eq:Pinftygeneral}
does not contribute since the bracket $R_2^1$ is here in $C$-direction
and we have
\begin{equation}
  \label{eq:Pinftysimpler}
  P_n
	=
	P_n^1 
	=
	-  P_{n-1}^1 \circ Q_n^{n-1}\circ H_n.
\end{equation}
Therefore, to prove the compatibility of $P$ with the differentials 
$[\pi_\KKS , \argument]$ and $\del$ we only have to show
\begin{equation*}
  -\del P_n^1
	=
	P_n^1 \circ (Q^\pi)_n^n,
\end{equation*}
where $(Q^\pi)_n^n$ is the extension of $-[\pi_\KKS,\argument]$. By the proof 
of Proposition~\ref{prop:bracketpiminusr} and the above arguments, the 
only part with a non-trivial contribution is the extension of
$-\del = -\id \otimes \inss(e^i) \otimes \id \otimes (e_i)_C \wedge$. 

\begin{proposition}
  \label{prop:PinftyquiswithPiKKS}
  The map $P$ from \eqref{eq:Pinftysimpler} is an
  $L_\infty$-quasi-isomorphism from the Taylor series expansion
  $(T_\Tay(C\times\liealg{g}^*),[\pi_\KKS-J,\argument])$ to
  $(T_\Cart(C),\del)$ and an $L_\infty$-quasi-inverse to the inclusion
  $\iota$ from Proposition~\ref{prop:InclusionCtoMQuis}. The same
  holds in the formal setting with the rescaled differentials
  $[\hbar\pi_\KKS-J,\argument]$ and $\hbar \del$.
\end{proposition}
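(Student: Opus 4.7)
The plan is to show that the already-constructed $L_\infty$-morphism $P$ of Proposition~\ref{prop:Infinityprojection}, with the simplified recursion \eqref{eq:Pinftysimpler}, remains an $L_\infty$-morphism after adding $-[\pi_\KKS,\argument]$ to the source differential and $-\del$ to the target differential. Since the brackets on both sides are unchanged, the $L_\infty$-relation for the augmented structures splits as the sum of the already-established relation (for the source/target differentials $[-J,\argument]$ and $0$) and the new identity
\begin{equation*}
  \del\circ P^1_n \;=\; P^1_n \circ \bigl([\pi_\KKS,\argument]\bigr)^{(n)}_n
\end{equation*}
for every $n\ge 1$, where the superscript denotes the coderivation extension to $\Sym^n(T_\Tay(C\times\liealg{g}^*)[1])$. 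This is the one compatibility that still needs to be verified.

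Next, I would carefully justify the two structural observations stated above the proposition. The recursion \eqref{eq:Pinftysimpler} expresses $P^1_n$ as $n-1$ applications of the homotopy (each $h$ trades one $\Sym\liealg{g}^*$-degree for one $\Anti\liealg{g}^*$-degree) interleaved with $n-1$ brackets (which pair $\Anti\liealg{g}^*$-components of one slot with $\prod\Sym\liealg{g}$-components of another via the Gerstenhaber extension of the $T_\Tay$-bracket) followed by $p$, which annihilates any residual $\Sym\liealg{g}^*$- or $\Anti\liealg{g}^*$-factor. By a direct degree count this forces $P^1_n$ to vanish unless all $n$ inputs have no $\Anti\liealg{g}^*$-content and total $\Sym\liealg{g}^*$-degree exactly $n-1$. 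Combined with the decomposition $[\pi_\KKS,\argument]=\delta_\CE+\del$ from Proposition~\ref{prop:bracketpiminusr}, this shows that the $\delta_\CE$-part of $[\pi_\KKS,\argument]$ alters either the $\Anti\liealg{g}^*$-degree or the $\prod\Sym\liealg{g}$-degree of one input in a way that breaks the balance required for $P^1_n$ to be non-zero; hence only the $\del$-part contributes on the right-hand side. A matching bookkeeping, using that $\del$ leaves $\Sym\liealg{g}^*$ and $\Anti\liealg{g}^*$ untouched and commutes with both the homotopy $h$ and the Schouten--Gerstenhaber bracket operations appearing in $P^1_n$, then identifies this contribution with $\del\circ P^1_n$, completing the induction on $n$.

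Quasi-invertibility is then essentially free: the linear component $P_1=p$ satisfies $p\circ i=\id_{T_\Cart(C)}$, while $\iota$ is already a DGLA quasi-isomorphism by Proposition~\ref{prop:InclusionCtoMQuis}, so $P_1$ is an isomorphism on cohomology and $P$ is an $L_\infty$-quasi-inverse to $\iota$. The formal version reduces to the same calculation: both $[\hbar\pi_\KKS,\argument]$ and $\hbar\del$ carry the same factor of $\hbar$, so the identity above survives the rescaling verbatim, and the $\hbar$-adic filtration guarantees convergence of any infinite sums implicit in extending $P$ to formal power series.

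The main obstacle, and the step most worth doing carefully, is the combinatorial degree-counting that pins down precisely when $P^1_n$ is non-zero and that rules out any contribution from the $\delta_\CE$-summand of $[\pi_\KKS,\argument]$. Everything else, including the formal-power-series statement, is a routine consequence once this core bookkeeping across the four tensor slots $(\Sym\liealg{g}^*,\prod\Sym\liealg{g},\Anti\liealg{g}^*,\Tpoly(C))$ has been done once and for all.
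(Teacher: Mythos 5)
Your proposal follows essentially the same route as the paper: reduce the claim to the single compatibility $-\del\circ P^1_n = P^1_n\circ(Q^\pi)_n^n$, use the degree count across the four tensor factors to show that only the $\del$-summand of $[\pi_\KKS,\argument]$ can contribute, and then close the induction via $[h,\del]=0$ together with the fact that $\del$ acts as a graded derivation of the bracket $Q_2^1$ (your ``commutes with the bracket operations'' is exactly the identity $(Q^\pi)_n^n Q_{n+1}^n = -Q_{n+1}^n (Q^\pi)_{n+1}^{n+1}$ the paper verifies), with the quasi-inverse and formal statements handled as you describe. No substantive difference from the paper's argument.
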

\begin{proof}
  By the above reasoning all brackets consist of pairings in
  $\liealg{g}^*$-direction and the $\Tpoly(C)$-components are just
  wedged together, so $\del$ satisfies a Leibniz rule.  
  Let us show the statement inductively.	
  For $n=1$ it is obvious.  In addition, we know $[h,\del] = 0$ and thus $H_n
  (Q^\pi)_n^n = - (Q^\pi)_n^n H_n$.  If we prove $(Q^\pi)_n^n Q_{n+1}^n = -
  Q_{n+1}^n (Q^\pi)_{n+1}^{n+1}$ then \eqref{eq:Pinftysimpler} gives
  inductively
  \begin{equation*}
    -\del P_{n+1}^1
    =
    \del P_{n}^1 Q_{n+1}^{n} H_{n+1}
    =
    - P_{n} (Q^\pi)_n^n Q_{n+1}^n H_{n+1}
    =
    -P_n Q_{n+1}^n H_{n+1} (Q^\pi)_{n+1}^{n+1}.
  \end{equation*}
  We only have to show the desired $(Q^\pi)_n^n Q_{n+1}^n = - Q_{n-1}^n
  (Q^\pi)_n^n$ on the image of $H_{n+1}$ on elements with
  $\Anti\liealg{g}^*$-degree zero and with sum of
  $\Sym\liealg{g}^*$-degrees $n$, where $n \geq 1$. In particular,
  elements in this image have $\Anti\liealg{g}^*$-degree $1$ and
  $\Sym\liealg{g}^*$-degree $n-1$. Consider $X_1\vee\cdots \vee
  X_{n+1}$ where w.l.o.g. $X_1$ has a
  $\Anti\liealg{g}^*$-contribution, then the bracket has to be with
  respect to this vector field, the other terms vanish later under
  $p$.  Using \eqref{eq:Qniformula} we get as only non-vanishing
  contribution
  \begin{align*}
    Q_{n+1}^n(X_1 \cdots X_{n+1})
    =
    \sum_{i=2}^{n+1}
    (-1)^{\abs{X_i} (\abs{X_2}+ \cdots + \abs{X_{i-1}})}
    Q_{2}^1(X_1\vee X_i) \vee X_2 \vee \cdots \stackrel{i}{\vee} 
    \cdots \vee X_{n+1},
  \end{align*}
  where $\abs{X_i}$ denotes the degree in
  $T_\Tay(C\times\liealg{g}^*)[1]$.  A straightforward computation
  shows again
  \begin{equation*}
    -\del Q_2^1(X_1 \vee X_i)
    =
    Q_2^1(\del X_1 \vee X_i + (-1)^{\abs{X_1}} X_1 \vee \del X_i)
  \end{equation*}
  and combining these two expressions the desired result follows by a
  comparison of the signs of all terms involving $\del X_j$.
\end{proof}

\begin{remark}
  Note that here we can not use the usual twisting procedure since we have no
  complete filtration compatible to $P$ such that $\pi_\KKS$
  is of degree one. Of course, this is to be expected since the
  differential on $T_\Cart(C)$ is not an inner one.
\end{remark}
 
We can also show that $P$ is compatible with the curvature, which is 
easier to show in the formal setting.

\begin{proposition}
  \label{prop:PinftyFormalCurved}
  The map $P$ from \eqref{eq:Pinftysimpler} is an
  $L_\infty$-morphism between the curved DGLAs
  $(T_\Tay(C\times\liealg{g}^*)[[\hbar]],\hbar\lambda,[-J,\argument],[\argument,\argument])$
  and $(T_\Cart(C)[[\hbar]],0,\hbar\del,[\argument,\argument])$.
\end{proposition}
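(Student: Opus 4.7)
The strategy is to obtain the desired curved $L_\infty$-morphism by applying the standard twisting procedure for $L_\infty$-morphisms to the flat morphism $P$ of the preceding proposition. By the formal version of Proposition~\ref{prop:PinftyquiswithPiKKS}, $P$ is an $L_\infty$-morphism between the flat DGLAs $(T_\Tay(C\times\liealg{g}^*)[[\hbar]], [\hbar\pi_\KKS-J,\argument],[\argument,\argument])$ and $(T_\Cart(C)[[\hbar]], \hbar\del,[\argument,\argument])$. The starting observation is that by \eqref{eq:PiKKSRJ} together with $[\pi_\KKS,\pi_\KKS]=0$, the element $\hbar\pi_\KKS$ is a formal curved Maurer--Cartan element of the curved DGLA $(T_\Tay(C\times\liealg{g}^*)[[\hbar]],\hbar\lambda,[-J,\argument],[\argument,\argument])$, and twisting this curved DGLA by $\hbar\pi_\KKS$ recovers precisely the flat source of $P$.

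Next I would invoke the twisting of $L_\infty$-morphisms: twisting $P$ by $\mu=-\hbar\pi_\KKS$ produces an $L_\infty$-morphism from the desired curved source $(T_\Tay(C\times\liealg{g}^*)[[\hbar]],\hbar\lambda,[-J,\argument],[\argument,\argument])$ to the target twisted by $P_*(\mu)=\sum_{k\geq 0}\frac{(-\hbar)^k}{k!}P_k(\pi_\KKS^{\vee k})$, whose structure maps are
\begin{equation*}
P^{\mathrm{tw}}_n(x_1,\dots,x_n)=\sum_{k\geq 0}\frac{(-\hbar)^k}{k!}P_{n+k}(\pi_\KKS^{\vee k}\vee x_1\vee\cdots\vee x_n).
\end{equation*}
All infinite sums converge in the $\hbar$-adic topology since the $k$-th summand carries a factor $\hbar^k$.

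The crucial step is then to show that every contribution with $k\geq 1$ vanishes. This is a direct consequence of the degree-counting argument recalled just before \eqref{eq:Pinftysimpler}: each $P_m$ is non-zero only on tuples whose total $\Anti\liealg{g}^*$-degree is zero, whereas both summands of $\pi_\KKS$ carry strictly positive $\Anti\liealg{g}^*$-degree. Hence $P_{n+k}(\pi_\KKS^{\vee k}\vee x_1\vee\cdots\vee x_n)=0$ for all $k\geq 1$; it follows that $P_*(\mu)=0$, so the target remains $(T_\Cart(C)[[\hbar]],0,\hbar\del,[\argument,\argument])$, and that $P^{\mathrm{tw}}_n=P_n$ for every $n$. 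Thus $P$ itself is the desired curved $L_\infty$-morphism. The main technical obstacle I anticipate is organizing the sign conventions in the twisting formulas consistently with Example~\ref{ex:curvedlie}, after which the vanishing argument is immediate from the structure of $\pi_\KKS$ given in Section~\ref{sec:TaylorSeriesofTpoly1}.
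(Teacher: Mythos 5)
Your proposal is correct and follows essentially the same route as the paper's proof: twist the flat morphism $P$ from Proposition~\ref{prop:PinftyquiswithPiKKS} by $-\hbar\pi_\KKS$, identify the twisted source with the curved DGLA in question via \eqref{eq:PiKKSRJ}, and use the fact that $\pi_\KKS$ carries positive $\Anti\liealg{g}^*$-degree to conclude that neither the target structure nor the Taylor coefficients of $P$ are changed by the twist. You merely make explicit the degree-counting argument for the vanishing of the higher twisted coefficients, which the paper only states in one line.
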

\begin{proof}
  We can twist $P$ from
  Proposition~\ref{prop:PinftyquiswithPiKKS} with $-\hbar\pi_\KKS$
  as in \cite[Lemma~2.7]{esposito.dekleijn:2018a:pre}.  Then we obtain
  an $L_\infty$-morphism
  $(T_\Tay(C\times\liealg{g}^*)[[\hbar]],\hbar\lambda,[-J,\argument])$
  to $(T_\Cart(C),0,\hbar\del)$. This is clear since the new
  codifferential on $\Sym (T_\Tay(C\times\liealg{g}^*)[[\hbar]][1])$ is
  given by
  \begin{align*}
    Q_0'
    & =
    Q_1(-\hbar\pi_\KKS) +\frac{1}{2} Q_2(-\hbar\pi_\KKS,-\hbar\pi_\KKS) \\
    & =
    [-\hbar\pi_\KKS-J,\hbar\pi_\KKS]
    =
    - \hbar \lambda \\
    Q_1' (X)
    & =
    Q_1(X) + Q_2(-\hbar\pi_\KKS,X)
    =
    [-\hbar\pi_\KKS +J +\hbar\pi_\KKS, X].
  \end{align*}
  Since $\pi_\KKS-R$ contains a $\Anti\liealg{g}^*$-degree the
  twisting does not change the $L_\infty$-structure on the Cartan
  model and the twisted morphism is just given by $P$.
\end{proof}
Note that in this case $P$ is no longer a quasi-isomorphism,
and that the result also holds in the classical setting:

\begin{corollary}
  \label{cor:PcompatiblewithcurvatureClassic}
  The map $P$ from \eqref{eq:Pinftysimpler} is also an
  $L_\infty$-morphism between the curved DGLAs
  $(T_\Tay(C\times\liealg{g}^*),\lambda,[-J,\argument])$ and
  $(T_\Cart(C),0,\del)$.
\end{corollary}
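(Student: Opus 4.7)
The plan is to carry out the twisting argument of Proposition~\ref{prop:PinftyFormalCurved} without any reference to a formal parameter. In the formal case, the $\hbar$-adic filtration provides convergence of the infinite sums arising in the twisting of an $L_\infty$-morphism by a Maurer--Cartan element. Here I will show that these same sums are in fact finite by a degree count, so the construction is directly available in the classical setting.

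More precisely, I would start from the $L_\infty$-morphism $P\colon (T_\Tay(C\times\liealg{g}^*),[\pi_\KKS-J,\argument])\to(T_\Cart(C),\del)$ of Proposition~\ref{prop:PinftyquiswithPiKKS} and twist its source by $-\pi_\KKS$. Identity~\eqref{eq:PiKKSRJ}, $[\pi_\KKS,J]=\lambda$, together with $[\pi_\KKS,\pi_\KKS]=0$ (since $\pi_\KKS$ is a Poisson structure), shows that $-\pi_\KKS$ is a curved Maurer--Cartan element of $(T_\Tay(C\times\liealg{g}^*),\lambda,[-J,\argument],[\argument,\argument])$: the twist kills the curvature and converts the differential $[-J,\argument]$ into the differential $[\pi_\KKS-J,\argument]$ appearing in Proposition~\ref{prop:PinftyquiswithPiKKS}. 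The corresponding twisted morphism has structure maps
\begin{equation*}
  P^{-\pi_\KKS}_n(X_1\vee\cdots\vee X_n)
  =
  \sum_{k\geq 0}\frac{1}{k!}\,P_{n+k}\bigl(X_1\vee\cdots\vee X_n\vee(-\pi_\KKS)^{\vee k}\bigr).
\end{equation*}

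The essential observation, already exploited inside the proof of Proposition~\ref{prop:PinftyFormalCurved}, is that both summands of $\pi_\KKS$ carry strictly positive $\Anti\liealg{g}^*$-degree (namely $2$ and $1$), whereas by the degree analysis preceding~\eqref{eq:Pinftysimpler} every $P_n$ vanishes as soon as any of its arguments has non-zero $\Anti\liealg{g}^*$-contribution. Consequently every term with $k\geq 1$ in the sum above is zero, the sum collapses to the $k=0$ term, and one has $P^{-\pi_\KKS}=P$. In particular the sum is manifestly finite, so no formal filtration is required, and the induced Maurer--Cartan element $\sum_{k\geq 1}\frac{1}{k!}P_k((-\pi_\KKS)^{\vee k})$ on the target $T_\Cart(C)$ also vanishes for the same degree reason, leaving the target curved $L_\infty$-structure unchanged. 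The main (and essentially only) obstacle is the sign bookkeeping needed to match the twisting conventions with the signs in Example~\ref{ex:curvedlie} and the identification $[\pi_\KKS,J]=\lambda$; this is routine and introduces no new ideas beyond what is already present in Proposition~\ref{prop:PinftyFormalCurved}.
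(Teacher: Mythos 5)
Your argument is correct, but it is not the route the paper's own proof takes. The paper deduces the corollary from the formal statement of Proposition~\ref{prop:PinftyFormalCurved}: since $P$ is $\hbar$-linear, the curved $L_\infty$-morphism relations in the formal setting differ from the already established non-curved relations only by the term $\hbar R_1^1 P_n^1 = P_{n+1}^1(\hbar Q_0\vee\argument)$, and comparing coefficients of $\hbar$ gives the classical identity $R_1^1P_n^1 = P_{n+1}^1(Q_0\vee\argument)$. You instead rerun the twisting argument directly in the classical setting and observe that the series defining $P^{-\pi_\KKS}$, the twisted structure maps and the pushed-forward Maurer--Cartan element all terminate, because both summands of $\pi_\KKS$ carry strictly positive $\Anti\liealg{g}^*$-degree while $P_n$ for $n\geq 2$ annihilates any input with nonzero $\Anti\liealg{g}^*$-contribution and $P_1=p$ kills $\pi_\KKS$ as well. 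This is exactly the alternative the authors sketch in Remark~\ref{Rem: Pcurved} immediately after the corollary, so it is a legitimate, self-contained proof; what it buys is independence from the detour through formal power series, at the cost of having to justify (as you do, via the finiteness of all sums) that the twisting procedure is meaningful without a complete filtration, whereas the paper's proof only needs the trivial cancellation of an overall factor of $\hbar$ from an identity between $\hbar$-linear maps. One small inaccuracy to fix: with the paper's conventions it is $+\pi_\KKS$, not $-\pi_\KKS$, that is the curved Maurer--Cartan element of $(T_\Tay(C\times\liealg{g}^*),\lambda,-[J,\argument],[\argument,\argument])$, see the discussion around \eqref{eq:PiKKSRJ}; you twist the flat source of $P$ by $-\pi_\KKS$ in order to undo that twist, which is perfectly admissible since twisting by an arbitrary element always yields a curved structure, but the element you twist by is not itself Maurer--Cartan for the structure you name. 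This falls under the sign bookkeeping you already flag and does not affect the validity of the argument.
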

\begin{proof}
  Since the morphism $P$ is $\hbar$-linear we can compute
  explicitly that the Taylor coefficients of $P$ are compatible
  with the above curved DGLA structures. By the
  construction of $P$ we know
  \begin{align*}
    R_2^1 P_n^2 
    =
    P_n^1 Q_n^n + P_{n-1}^{1} Q_n^{n-1},
  \end{align*}
  where $R_2^1$ is the bracket on the Cartan model and $Q_1^1$ is the
  extension of $[J,\argument]$. Moreover, we have by
  Proposition~\ref{prop:PinftyFormalCurved}
  \begin{align*}
    \hbar R_1^1 P_n^1 + R_2^1 P_n^2 
    =
    P_{n+1}^1(\hbar Q_0 \vee \argument) +
    P_n^1 Q_n^n + P_{n-1}^{1} Q_n^{n-1},
  \end{align*}
  where $R_1^1 = - \del$ and $Q_0 = -\lambda$. This gives 
  \begin{align*}
    \hbar R_1^1 P_n^1
    =
    P_{n+1}^1(\hbar Q_0 \vee \argument)
    \quad \Longrightarrow \quad
    R_1^1 P_n^1
    =
    P_{n+1}^1( Q_0 \vee \argument)
  \end{align*}
  and the statement is shown.
\end{proof}

\begin{remark}\label{Rem: Pcurved}
  This can also be directly shown for the classical setting. Indeed,
  we do not have the complete filtration, but by the explicit forms of
  $P$ and $\pi_\KKS $ all the appearing series in the
  twisting procedure are still well-defined.
\end{remark}

\section{The Reduction $L_\infty$-Morphism and Reduction of Formal Poisson Structures}
\label{sec:ReductionMorphism}

Let us now merge together all the results we obtained in the 
previous sections in order to finalize the construction of the reduction scheme. 
Given a Lie group action $\Phi\colon G\times M\to M$ 
on a general manifold $M$ and an equivariant map $J\colon M \to \liealg{g}^*$ with 
value and regular value $0$ interpreted as an element 
$J\in (\liealg{g}^*\tensor\Cinfty(M))^G$. 
In \eqref{eq:curvedequitpolywithfixedJ} we defined the curved differential 
graded Lie algebra
	\begin{align*}
	  (T_{\liealg{g}}(M),\lambda,-[J,\argument],[\argument,\argument]),
	\end{align*}	 
and we want to obtain an $L_\infty$-morphism to $T_\poly(M_\red)$ with 
zero differential in order to reduce in particular formal Poisson structures.

\subsection{The Reduction $L_\infty$-morphism}

Under the above assumptions 
that the action is proper in an open neighbourhood of 
the constraint surface $C:=J^{-1}(\{0\})$, we find an open 
$\group{G}$-invariant neighbourhood 
$C\subseteq M_\nice  \cong U_\nice  \subseteq C\times \liealg g^*$, 
such that the momentum map on $U_\nice$ is just
the projection on the second factor and such the group acts as 
the product of the action on $C$ and the coadjoint action. 
This yields the curved DGLA morphism 
\begin{align*}
	\cdot\at{U_\nice }\colon  
	(T_{\liealg{g}}(M),\lambda,-[J,\argument],[\argument,\argument])
	\longrightarrow
	 (T_{\liealg{g}}(U_\nice ),
	 \lambda\at{U_\nice },-[J\at{U_\nice },\argument],[\argument,\argument])
\end{align*}
which is just the restriction to the invariant open subset 
$M_\nice $ concatenated with the extension of the $\group{G}$-equivariant 
diffeomorphism to $U_\nice$. Moreover, we know from 
\cite[Lemma~3]{bordemann.herbig.waldmann:2000a} that $U_\nice$ is an 
open neighbourhood of $C\times \{0\}$ such that $U_\nice \cap (\{p\} \times 
\liealg{g}^*)$ is star-shaped around $\{p\} \times \{0\}$ for all 
$p \in C$, hence we also have the Taylor expansion 
as in Equation~\eqref{eq:TaylorSeries}. It is a morphism of 
curved DGLA's
	\begin{align*}
	   T_{\liealg{g}^*}\colon 
	   (T_{\liealg{g}}(U_\nice ),
	 	\lambda\at{U_\nice },-[J\at{U_\nice },\argument],
	 	[\argument,\argument]))
	 	\longrightarrow
	 	\big(T_\Tay (C\times \liealg{g}^*),\lambda,-[J,\argument],
	 	[\argument,\argument]\big).
\end{align*}
With Propostion~\ref{prop:PinftyFormalCurved} and 
Corollary~\ref{cor:PcompatiblewithcurvatureClassic}, 
we obtain furthermore a curved $L_\infty$-morphism 
	\begin{align*}
	P\colon 
	\big(T_\Tay (C\times \liealg{g}^*),\lambda,-[J,\argument],
	[\argument,\argument]\big)
	\longrightarrow
	(T_\Cart(C),0,\del,[\argument,\argument])
	\end{align*}
and finally we have the projection $p\colon(T_\Cart(C),0,\del,[\argument,\argument])\to 
(\Tpoly(M_\mathrm{red}),0,0,[\argument,\argument])$ from 
Equation~\eqref{Eq: DefRetrCart} that is a DGLA 
morphism and hence also a morphism of (curved) $L_\infty$-algebras.

\begin{theorem}
  \label{thm:ClassicalTred2}
  The concatenation of all the above morphism results in a curved $L_\infty$-morphism 
	\begin{align}
	  \mathrm{T}_\red\colon 
		(T_{\liealg{g}}(M),\lambda,-[J,\argument],  [\argument,\argument])
	  \longrightarrow 
	  (\Tpoly(M_\mathrm{red}),0,0,[\argument,\argument]),
	\end{align}	   
	called \emph{reduction $L_\infty$-morphism}. Considering the setting of 
	formal power series in $\hbar$ we can extend $\mathrm{T}_\red$ $\hbar$-linearly and obtain  
	\begin{align*}
		\mathrm{T}_\red\colon (T_{\liealg{g}}(M)[[\hbar]],\hbar\lambda,-[J,\argument],
		[\argument,\argument])
		\longrightarrow
		(\Tpoly(M_\mathrm{red})[[\hbar]],0,0,
		[\argument,\argument]).
	\end{align*}
\end{theorem}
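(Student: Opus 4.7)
The proof is essentially an assembly argument: all the heavy lifting was done in Sections~\ref{sec:TaylorSeriesofTpoly} and~\ref{sec:Linfyquasiinverseofiota}, and what remains is to string four maps together and check that the curvature and differential data match at each junction. The plan is to verify that each of the four maps in the theorem statement is a curved $L_\infty$-morphism, and then to appeal to the fact that composition of (curved) $L_\infty$-morphisms is again a (curved) $L_\infty$-morphism, which reduces to a standard coalgebra-morphism calculation.

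First I would identify the building blocks and their matching curved structures: (i) restriction $\cdot\at{U_\nice}$, a strict curved DGLA morphism whose source is $(T_\liealg{g}(M),\lambda,-[J,\argument])$ and whose target is $(T_\liealg{g}(U_\nice),\lambda\at{U_\nice},-[J\at{U_\nice},\argument])$; (ii) the Taylor expansion $T_{\liealg{g}^*}$, which by Lemma~\ref{Lem: InvTayl} together with the subsequent corollary is a curved DGLA morphism to $(T_\Tay(C\times\liealg{g}^*),\lambda,-[J,\argument])$; (iii) the coalgebra morphism $P$ from \eqref{eq:Pinftysimpler}, which by Corollary~\ref{cor:PcompatiblewithcurvatureClassic} is a curved $L_\infty$-morphism to $(T_\Cart(C),0,\del)$; and (iv) the projection $p$, which by Proposition~\ref{prop:redfromctomred} is a DGLA morphism with target $(\Tpoly(M_\red),0,[\argument,\argument])$. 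One point that deserves an explicit remark is that the differential $\del$ gets sent to $0$ by $p$: this is automatic because $\del$ decreases $\Sym\liealg{g}$-degree by one, so on elements of symmetric degree zero (which are exactly those surviving $p$) the image of $\del$ lands in the trivial summand of negative symmetric degree.

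The main conceptual obstacle is the curvature: the curvature $\lambda$ persists unchanged through the first two strict morphisms, and the crucial step is that $P$ is compatible with the curvature in the sense of Corollary~\ref{cor:PcompatiblewithcurvatureClassic}, so that the target of $P$ carries zero curvature. This is exactly what makes the final DGLA morphism $p$ land in the non-curved target $\Tpoly(M_\red)$ as required. Given this, the composition $p\circ P\circ T_{\liealg{g}^*}\circ(\cdot\at{U_\nice})$ is a curved $L_\infty$-morphism with the desired source and target, which we name $\mathrm{T}_\red$.

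For the formal extension, I would simply $\hbar$-linearize each of the four maps. The only step that interacts nontrivially with the rescaling is $P$, and Proposition~\ref{prop:PinftyFormalCurved} provides exactly the statement that $P$ is a curved $L_\infty$-morphism from $(T_\Tay(C\times\liealg{g}^*)[[\hbar]],\hbar\lambda,-[J,\argument])$ to $(T_\Cart(C)[[\hbar]],0,\hbar\del)$; the $\hbar$-adic filtration ensures that any infinite sums arising when evaluating the Taylor coefficients $P_n$ on formal elements converge. Composing with the $\hbar$-linearized restriction, Taylor expansion, and projection $p$ gives the desired formal reduction $L_\infty$-morphism, and no further verification is required.
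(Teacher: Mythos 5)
Your proposal is correct and follows essentially the same route as the paper, which likewise obtains Theorem~\ref{thm:ClassicalTred2} by simply concatenating the restriction to $U_\nice$, the Taylor expansion $T_{\liealg{g}^*}$, the curved $L_\infty$-morphism $P$ (justified by Corollary~\ref{cor:PcompatiblewithcurvatureClassic} in the classical case and Proposition~\ref{prop:PinftyFormalCurved} in the formal case) and the projection $p$. One small caveat: your side remark that $p\circ\del=0$ ``by symmetric degree'' reasons about $\del$ applied to degree-zero elements rather than about $p$ applied to the image of $\del$ --- the degree-zero component of $\del(\xi\otimes X)$ comes from $\xi$ of symmetric degree one and has the form $(e_i)_C\wedge(\cdots)$, which is killed only because the fundamental vector fields are vertical for $C\to M_\red$ --- but this does not affect your argument, since the fact you actually need is exactly Proposition~\ref{prop:redfromctomred}, which you cite.
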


\subsection{Reduction of Formal Poisson Structures}

As mentioned above, a formal curved Maurer-Cartan element 
$\hbar(\pi-J')\in \hbar T_{\liealg{g}}(M)[[\hbar]]$ is an 
invariant formal Poisson structure $\hbar\pi$ with formal moment map $J+\hbar J'$. 
By $\mathrm{T}_\red$ we obtain therefore a formal Maurer-Cartan element 
\begin{align}
	\hbar\pi_\mathrm{red}
	=
	\sum_{k\geq 1}\frac{1}{k!} \mathrm{T}_{\red,k}(\hbar(\pi-J')^{\vee k})
\end{align}	 
in $\Tpoly(M_\mathrm{red})[[\hbar]]$ which corresponds to a formal Poisson structure 
$\pi_\red$ on $M_\red$. 

In order to show that this morphism gives indeed a non-trivial 
reduction scheme for formal Poisson structures we show at first that we 
recover the Marsden-Weinstein reduction. This classical setting is included in our formulation 
by considering special curved formal Maurer-Cartan elements 
$\hbar \pi \in \hbar T_\liealg{g}(M)[[\hbar]]$, 
where in fact $\pi \in T^1_\poly(M)$ does not depend on $\hbar$, i.e. is 
a classic $\group{G}$-invariant Poisson structure with momentum map $\pi$. 

\begin{proposition}
  \label{prop:ClassicalComparison}
  The reduction procedure of Marsden-Weinstein coincides with the one via $\mathrm{T}_\red$ from 
  Theorem~\ref{thm:ClassicalTred2} for Maurer-Cartan elements of the 
	form $\hbar \pi \in \hbar T_\liealg{g}(M)[[\hbar]]$ with $\pi \in T^1_\poly(M)$.
\end{proposition}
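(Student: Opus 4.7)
The plan is to show that the Taylor coefficients of $\mathrm{T}_\red$ of order $\geq 2$ vanish on the input $\hbar\pi$, and then to identify the remaining linear piece with the Marsden-Weinstein reduced bivector. First I would decompose $\mathrm{T}_\red$ as the composition of the restriction to $U_\nice$, the Taylor expansion $T_{\liealg{g}^*}$, the $L_\infty$-quasi-isomorphism $P$ of Proposition~\ref{prop:PinftyquiswithPiKKS}, and the DGLA-projection $p\colon T_\Cart(C)\to \Tpoly(M_\red)$. Since only $P$ carries non-trivial higher Taylor coefficients, one has
\[
  \mathrm{T}_\red(\hbar\pi)
  =
  \sum_{k\geq 1}\frac{\hbar^k}{k!}\,p\circ P_k\bigl(T_{\liealg{g}^*}(\pi)^{\vee k}\bigr).
\]

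The key observation I would exploit next is that $\pi$, being a plain invariant bivector field, sits in the $\Sym^0\liealg{g}^*$-component of $T^1_\liealg{g}(M)$, so $T_{\liealg{g}^*}(\pi)$ is concentrated in the $\Sym^0\liealg{g}^*$-part of $T_\Tay(C\times\liealg{g}^*)$. The homotopy $h$ from Proposition~\ref{prop:cohomologytrivialbracketj} carries a factor $\inss(e_\ell)$ acting on $\Sym\liealg{g}^*$, and therefore vanishes identically on $\Sym^0\liealg{g}^*$-elements. Since $\tilde H_n$ applies $h$ to a single slot, $H_n$ annihilates $T_{\liealg{g}^*}(\pi)^{\vee n}$ for every $n\geq 1$. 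The recursion $P_n = -P_{n-1}^1\circ Q_n^{n-1}\circ H_n$ from Equation~\eqref{eq:Pinftysimpler} then forces inductively $P_n(T_{\liealg{g}^*}(\pi)^{\vee n})=0$ for all $n\geq 2$, so the above sum collapses to
\[
  \mathrm{T}_\red(\hbar\pi)
  =
  \hbar\cdot p\circ P_1\bigl(T_{\liealg{g}^*}(\pi)\bigr).
\]

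It remains to identify this linear term with $\hbar\pi_\mathrm{MW}$. By construction $P_1$ is the deformation-retract projection $T_\Tay \to T_\Cart(C)$, i.e.\ the projection to the $\Sym^0\liealg{g}^*\otimes\Anti^0\liealg{g}^*$-component, which in terms of the splitting $TM\at{C\times\{0\}} = TC\oplus (T_0\liealg{g}^*)$ extracts the $TC\wedge TC$-block of $\pi$, still carrying its Taylor dependence on the $\liealg{g}^*$-coordinates. The subsequent $p$ selects the $\Sym^0\liealg{g}$-component (i.e.\ evaluation at $\alpha=0$) and pushes the resulting tangent bivector forward along $\pr\colon C\to M_\red$. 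I would then match this with the Marsden-Weinstein prescription: for $f,g\in \Cinfty(M_\red)$, the $\group{G}$-invariant extensions $F(c,\alpha)=f(\pr(c))$ and $G(c,\alpha)=g(\pr(c))$ are constant in the $\liealg{g}^*$-direction, so their differentials pair with $\pi\at{C\times\{0\}}$ only through the tangent-to-$C$ block, giving $\{f,g\}_\mathrm{MW}\circ\pr = \{F,G\}_\pi\at{C}$.

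The hard part will be this last identification: one must carefully unwind the definition of $T_{\liealg{g}^*}$ in Equation~\eqref{eq:TaylorSeries} and that of $P_1$ to verify that the composition $p\circ P_1$ genuinely implements block extraction, evaluation at $\alpha=0$ and pushforward to $M_\red$. The computation is of a bookkeeping nature — tracking how the $\Anti\liealg{g}^*$-degree counts transversal derivatives and the $\Sym\liealg{g}$-degree counts Taylor orders in $\alpha$ — but it is where one has to avoid mistakes; the vanishing step in the preceding paragraph is essentially formal and is not the obstacle.
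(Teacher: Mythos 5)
Your proposal is correct and follows essentially the same route as the paper: the paper likewise reduces to the Taylor picture, observes via the normal form $T_{\liealg{g}^*}(\pi)=\pi_\KKS+\pi_C$ of Lemma~\ref{lemma:TaylorexpansionPoissonnormalform} that the series $\sum_k \frac{1}{k!}\,p\circ P_k$ collapses to $p(\hbar\pi_C^0)$, and identifies this with the Marsden--Weinstein bracket through $\iota^*\bigl((\pi_\KKS+\pi_C)(\D\prol p^*\phi,\D\prol p^*\psi)\bigr)$. Your justification of the collapse via the $\inss(e_\ell)$ factor in $h$ annihilating $\Sym^0\liealg{g}^*$-elements makes explicit a step the paper leaves implicit, and your final ``block extraction'' identification is exactly the paper's closing computation.
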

\begin{proof}
  By Lemma~\ref{lemma:TaylorexpansionPoissonnormalform} we know that 
	$\hbar\pi$ takes in the Taylor expansion $\hbar\pi$ the form 
	$\hbar \pi_\KKS + \hbar \pi_C$, where 
  $\pi_C = \prod_i \pi_C^i$
  with $\pi_C^i \in \Sym^i \liealg{g} \otimes T_\poly^1(C)$. Then 
	the application of $p \circ P$ yields a Maurer-Cartan element
  $\hbar\pi_\red$ in the reduced DGLA
  $(T_\poly(M_\red[[\hbar]]),0,[\argument,\argument])$ via
  \begin{equation*}
    \hbar\pi_\red
    =
    \sum_{k\geq 1} \frac{1}{k!} p \circ P_k(\hbar(\pi_\KKS+\pi_C),\dots,\hbar(\pi_\KKS+\pi_C))
    =
    p( \hbar\pi_C^0),
  \end{equation*}
  so this series is indeed well-defined. This Maurer-Cartan element corresponds 
	to a classical Poisson structure $\pi_\red$ with
  \begin{align*}
    p^*\pi_\red(\D \phi, \D \psi)
    & =
    \pi_C^0 (\D p^*\phi, \D p^*\psi)
    =
    \iota^*((\pi_\KKS+ \pi_C)(\D \prol p^*\phi, \D\prol p^*\psi)
  \end{align*}
  for $\phi,\psi \in \Cinfty(M_\red)$, where $\prol \colon \Cinfty(C) 
	\rightarrow \Cinfty(C) \otimes \prod_i \Sym^i \liealg{g}$ is the 
	canonical prolongation. But this
  is just the usual reduced Poisson structure 
	from Marsden-Weinstein reduction.
\end{proof}

Now we want to show that our construction is 
indeed a non-trivial extension of the classical Marsden-Weinstein 
reduction to the formal setting. For simplicity, let us  
consider for a moment just a part of $\mathrm{T}_\red$, namely the map
\begin{align*}
	\tilde{\mathrm{T}}_\red = p \circ P \colon 
	\big(T_\Tay (C\times \liealg{g}^*)[[\hbar]],\hbar\lambda,-[J,\argument],
	[\argument,\argument]\big)
	\longrightarrow
	(\Tpoly(M_\mathrm{red})[[\hbar]],0,0,
	[\argument,\argument]).
\end{align*}

\begin{lemma}
	The induced map at the level of Maurer--Cartan elements 
	\begin{align}
	\begin{split}
	  \tilde{\mathrm{T}}_\red
	  \colon
	  MC(T_\Tay (C\times \liealg{g}^*)[[\hbar]])
	  & \longrightarrow
	  MC(\Tpoly(M_\mathrm{red})[[\hbar]])  \\
		\hbar (\pi - J') 
	  &\longmapsto
	  \sum_{k\geq 1}\frac{1}{k!}\tilde{\mathrm{T}}_{\red,k}((\hbar (\pi -J')^{\vee k})
	\end{split}
	\end{align}
	is a surjection. 
	\end{lemma}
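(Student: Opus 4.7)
The plan is to explicitly construct a preimage of $\hbar\pi_\red$ by running the diagram of Theorem~\ref{thm:Tpolyredformal} in reverse. Given a formal Poisson structure $\pi_\red$ on $M_\red$, the $\hbar$-linear extension of the $L_\infty$-quasi-isomorphism $i_\infty$ from Proposition~\ref{prop:iinftycartan} yields
\[
\Pi
\;=\;
\sum_{k\geq 1} \frac{1}{k!}\, i_{\infty,k}(\pi_\red^{\vee k})
\;\in\; T_\Cart^1(C)[[\hbar]],
\]
where the sum converges since the $k$-th summand has symmetric $\liealg{g}$-degree $k-1$ (cf.~Corollary~\ref{Cor: SurjProj}). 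Then $\Pi$ is a Maurer--Cartan element of $(T_\Cart(C)[[\hbar]], \del, [\argument,\argument])$ with $p(\Pi) = \pi_\red$, and rescaling gives a Maurer--Cartan element $\hbar\Pi$ of $(T_\Cart(C)[[\hbar]], \hbar\del, [\argument,\argument])$. Because $\iota$ is a DGLA morphism by Proposition~\ref{prop:bracketpiminusr} and $\hbar\pi_\KKS$ is a curved Maurer--Cartan element of the source DGLA by \eqref{eq:PiKKSRJ}, a direct expansion of the curved Maurer--Cartan equation (using $[J, \iota(\Pi)] = 0$ and $[\pi_\KKS, \iota(\Pi)] = \del\iota(\Pi)$) shows that
\[
\hbar(\pi_\KKS + \iota(\Pi))
\]
is a curved Maurer--Cartan element of $(T_\Tay(C\times\liealg{g}^*)[[\hbar]], \hbar\lambda, -[J,\argument], [\argument,\argument])$. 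This is our candidate preimage.

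The decisive step will then be the vanishing
\[
P_{a+b}\bigl(\pi_\KKS^{\vee a} \vee \iota(\Pi)^{\vee b}\bigr)
\;=\; 0
\qquad\text{for every } a+b \geq 2.
\]
Indeed, the recursion \eqref{eq:Pinftysimpler} begins by applying the extended homotopy $H_n$, and the underlying homotopy $h = \inss(e_\ell) \otimes \id \otimes e^\ell \wedge \otimes \id$ from Proposition~\ref{prop:cohomologytrivialbracketj} acts as $\inss(e_\ell)$ on the outer $\Sym\liealg{g}^*$-factor and therefore kills any element with no outer $\Sym\liealg{g}^*$-component. Both $\pi_\KKS$ and $\iota(\Pi)$ live entirely in the $\Sym^0\liealg{g}^*$-sector, so $h$ vanishes on each factor of $\pi_\KKS^{\vee a} \vee \iota(\Pi)^{\vee b}$; hence $H_n$ vanishes on this symmetric product, and by \eqref{eq:Pinftysimpler} so does every $P_{a+b}$ with $a+b\geq 2$.

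Consequently, expanding $\tilde{\mathrm{T}}_{\red,*} = \sum_k \frac{1}{k!}(p\circ P)_k$ on the candidate preimage collapses to its linear term:
\[
\tilde{\mathrm{T}}_{\red,*}\bigl(\hbar(\pi_\KKS + \iota(\Pi))\bigr)
\;=\; p \circ P_1\bigl(\hbar(\pi_\KKS + \iota(\Pi))\bigr)
\;=\; \hbar\, p(\Pi)
\;=\; \hbar\pi_\red,
\]
using that $P_1$ (the projection of the deformation retract) annihilates $\pi_\KKS$, both of whose summands carry positive $\Anti\liealg{g}^*$-degree, while $P_1\circ\iota = \id$ and $p\circ i_{\infty,k}$ reduces to the identity at $k=1$ and vanishes for $k\geq 2$. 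The main obstacle is to check the consistent matching between the curved picture (in which $\tilde{\mathrm{T}}_\red$ is stated in the lemma) and the twisted picture (in which the vanishing of $H_n$ is most transparent), which is handled automatically by the twist by $-\hbar\pi_\KKS$ used in Proposition~\ref{prop:PinftyFormalCurved}; once this is recorded, surjectivity follows at once.
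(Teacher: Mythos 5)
Your proposal is correct and follows essentially the same route as the paper: lift $\pi_\red$ to a Maurer--Cartan element $\Pi$ of the Cartan model via $i_\infty$ (Corollary~\ref{Cor: SurjProj}), pass to $\hbar(\pi_\KKS+\Pi)$ in the Taylor expansion using Proposition~\ref{prop:bracketpiminusr}, and observe that $\tilde{\mathrm{T}}_\red$ sends it back to $\hbar\pi_\red$. In fact you supply a justification the paper leaves implicit, namely that the higher Taylor coefficients $P_n$, $n\geq 2$, vanish on $\pi_\KKS^{\vee a}\vee\iota(\Pi)^{\vee b}$ because the homotopy $h$ annihilates everything in the $\Sym^0\liealg{g}^*$-sector, which is exactly why the series collapses to its linear term.
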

\begin{proof}
Let $\hbar\pi_\red\in MC(\Tpoly(M_\mathrm{red})[[\hbar]])$, then we 
know from Corollary~\ref{Cor: SurjProj} that
\begin{align*}
	\hbar \Pi
	=
	\sum_{k\geq 1}\frac{1}{k!} \iota_{\infty,k}((\hbar\pi_\red)^{\vee k})
\end{align*} 
is a well-defined Maurer--Cartan element in $T_\Cart(C)[[\hbar]]$
 with $p(\hbar\Pi)=\hbar\pi$. Using 
Proposition~\ref{prop:bracketpiminusr} we see that 
$\hbar(\pi_\KKS +\Pi) \in MC(T_\Tay(C\times\liealg{g})[[\hbar]])$ 
and
\begin{align*}
  \sum_{k\geq 1}\frac{1}{k!}\tilde{\mathrm{T}}_{\red,k}((\hbar(\pi_\KKS+ \Pi))^{\vee k})
	=
	p(\hbar\Pi)
	=
	\pi
\end{align*}
as desired.
\end{proof}
\subsection{Comparison of the Reduction Procedures}
\label{sec:ComparisonofRedProd}

We conclude with a comparison of the different reduction procedures. 
More explicitly, we want to compare the reduction via $\mathrm{T}_\red$ 
from Theorem~\ref{thm:ClassicalTred2} with the 
with the reduction of formal Poisson structures via the homological
perturbation lemma, see Appendix~\ref{sec:BRSTlikeReduction}.

In the setting of curved DGLA's or curved $L_\infty$-algebras it is 
more tricky to talk about equivalent Maurer-Cartan elements. Thus
we switch to the description of our reduction in terms of 
flat DGLA's as in Theorem~\ref{thm:Tpolyredformal}. Here we need 
$\pi_\KKS$ which is not available in the general setting, so from now 
on we restrict ourselfes to the Taylor expansion $\left(T_\Tay(C\times\liealg{g}^*),
[\hbar\pi_\KKS-J,\argument], [\argument,\argument]\right)$.

Consider formal Poisson structure $\pi_\hbar = \sum_{r=0}^\infty
\hbar^r \pi_r \in \Secinfty(\Anti^2 TM)[[\hbar]]$ with formal
equivariant momentum map $J_\hbar = J + \hbar J'
\colon \liealg{g} \rightarrow \Cinfty(M)[[\hbar]]$. 
By Proposition~\ref{prop:reducedpoissonvianormalizer} 
one gets an induced formal Poisson
bracket on $M_\red = J^{-1}(\{0\})/\group{G}$ via
\begin{equation*}
  \pi^*\{u,v\}_\red
  =
  \boldsymbol{\iota^*}\{[\prol\pi^*u],[\prol\pi^*v]\}_\hbar,
\end{equation*}
where the deformed restriction map is given by
\begin{equation}
  \label{eq:DeformedRestriction}
  \boldsymbol{\iota^*}
  = 
  \iota^*(\id +\insa(\hbar J') h_0)^{-1}
	=
	\iota^* \sum_{k=0}^\infty (- \insa(\hbar J' )h_0)^k ,
\end{equation}
compare Proposition~\ref{prop:reducedpoissonvianormalizer}. We directly see 
that the reduction procedure works analogously for 
$\pi_\hbar \in T_\Tay^1(C\times \liealg{g}^*)[[\hbar]]$.

\begin{theorem}
  \label{thm:FormalReductionsEqual}
  The reduction of formal equivariant Poisson structures with formal momentum 
	maps via 
	\begin{align*}
	  \tilde{\mathrm{T}}_\red = p \circ P \colon 
	  \big(T_\Tay (C\times \liealg{g}^*)[[\hbar]],[\hbar\pi_\KKS-J,\argument],
	  [\argument,\argument]\big)
	  \longrightarrow
	  (\Tpoly(M_\mathrm{red})[[\hbar]],0,
	  [\argument,\argument])
  \end{align*}
	coincides with the reduction of formal Poisson structures via the homological 
	perturbation lemma from Proposition~\ref{prop:reducedpoissonvianormalizer}.
\end{theorem}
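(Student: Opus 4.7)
The plan is to verify agreement by reducing to a normal form on Maurer--Cartan elements and then checking gauge--invariance of both procedures. By Corollary~\ref{cor:EquivtoTrivialMC} together with the explicit construction of Proposition~\ref{prop:ExplicitEquivalenceXinfty}, every formal Maurer--Cartan element $\hbar(\pi-J')$ in $T_\Tay(C\times\liealg{g}^*)[[\hbar]]$ is gauge--equivalent, via $\exp(-X^\infty_\hbar)$, to one of the form $\hbar\pi_C$ with $\pi_C\in T_\Cart^1(C)[[\hbar]]$, i.e.\ with undeformed momentum map $J$. So it suffices to (a) check that both procedures agree on elements of normal form $\hbar\pi_C$, and (b) check that both procedures are compatible with the gauge transformation $\exp(-X^\infty_\hbar)$.

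For step (a), the $\tilde{\mathrm{T}}_\red$--side is almost immediate from the description around \eqref{eq:Pinftysimpler}: the Taylor coefficient $P_k^1$ vanishes on $k$--fold products of Cartan--model elements for all $k\geq 2$, because such elements carry no $\Anti\liealg{g}^*$--degree and zero $\Sym\liealg{g}^*$--degree whereas $P_k^1$ requires the $\Sym\liealg{g}^*$--degrees of its arguments to sum to $k-1$. Hence $\tilde{\mathrm{T}}_\red(\hbar\pi_C)=p(\hbar\pi_C)$. For the homological perturbation reduction the hypothesis $J'=0$ forces $\boldsymbol{\iota^*}=\iota^*$, and a direct computation (essentially reproducing the argument of Proposition~\ref{prop:ClassicalComparison} but now $\hbar$--linearly over $T_\Cart^1(C)[[\hbar]]$) shows that the $\pi_\KKS$--contribution to $\iota^*\{\prol\pi^*u,\prol\pi^*v\}_{\hbar(\pi_\KKS+\pi_C)}$ vanishes. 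Indeed, $\prol\pi^*u$ is a $\group{G}$--invariant extension of $\pi^*u$ to $C\times\liealg{g}^*$, so $(e_i)_C(\prol\pi^*u)=0$ on $C$, while the $\partial/\partial\alpha_i$--components of $\pi_\KKS$ produce a factor of $\alpha_i$ that is killed by $\iota^*$. Thus only $\hbar\pi_C$ survives and the reduced bracket equals $p(\hbar\pi_C)(\D u,\D v)$, matching the $\tilde{\mathrm{T}}_\red$--output.

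For step (b) we argue on both sides. On the $L_\infty$--side, $\tilde{\mathrm{T}}_\red$ sends gauge equivalent Maurer--Cartan elements to gauge equivalent Maurer--Cartan elements in $\Tpoly(M_\red)[[\hbar]]$; moreover the generator $X^\infty_\hbar=\sum\hbar^k J'^k_i e^i$ has pure $\liealg{g}^*$--degree one, so its image under $p\circ P$ lies in $\Tpoly^{-1}(M_\red)[[\hbar]]=\Cinfty(M_\red)[[\hbar]]$, meaning that the induced gauge is an inner Hamiltonian gauge; since the reduced Poisson structure is invariant under such inner gauges, $\tilde{\mathrm{T}}_\red(\hbar(\pi-J'))=\tilde{\mathrm{T}}_\red(\hbar\pi_C)=p(\hbar\pi_C)$. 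On the homological perturbation side, the deformed restriction
\[
  \boldsymbol{\iota^*}=\iota^*\sum_{k=0}^\infty(-\insa(\hbar J')h_0)^k
\]
is engineered precisely so as to implement the same absorption of $\hbar J'$: one checks that $\boldsymbol{\iota^*}=\iota^*\circ\exp(-\{X^\infty_\hbar,\argument\}_\hbar)$ modulo terms vanishing on $C$, by comparing the recursion for $J'^{k+1}_i$ in Proposition~\ref{prop:ExplicitEquivalenceXinfty} with the iterated action of $-\insa(\hbar J')h_0$ on prolongations. Consequently the two reductions of $\hbar(\pi-J')$ again both equal the reduction of the normal form $\hbar\pi_C$, namely $p(\hbar\pi_C)$.

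The main obstacle is the identification in step (b), namely showing rigorously that the Neumann series defining $\boldsymbol{\iota^*}$ matches order--by--order the gauge flow generated by $X^\infty_\hbar$. This is a purely combinatorial bookkeeping problem, comparing on the one hand the recursive definition $X^{k+1}_\hbar=-(\exp(X^k_\hbar)\cdots\exp(X^1_\hbar)(J)-J-\hbar J')$ from Proposition~\ref{prop:ExplicitEquivalenceXinfty} and on the other the geometric series $\sum_k(-\insa(\hbar J')h_0)^k$ together with the explicit form of the homotopy $h_0$ used in the homological perturbation lemma of Appendix~\ref{sec:BRSTlikeReduction}. Once this identity is established, the theorem follows by combining it with the normal--form reduction and the direct computation of part (a).
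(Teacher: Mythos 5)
Your overall architecture is the same as the paper's: reduce to the normal form $\hbar\pi_C$ via Corollary~\ref{cor:EquivtoTrivialMC} and Proposition~\ref{prop:ExplicitEquivalenceXinfty}, check agreement there, and then check that the gauge transformation $\exp(-X^\infty_\hbar)$ is harmless for both procedures. Your step (a) is essentially the paper's first paragraph and is fine. The problems are in step (b), on both sides.

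On the $L_\infty$-side your degree bookkeeping for $X^\infty_\hbar$ is wrong. The element $X^\infty_\hbar$ is built from $J'^k_i e^i$ where $e^i$ is to be read as $\partial/\partial\alpha_i$, i.e.\ it is a (formal) \emph{vector field} in the $\liealg{g}^*$-direction, carrying $\Anti^1\liealg{g}^*$-degree and no $\Sym\liealg{g}^*$-degree; it is a degree-$0$ element of the DGLA, as a gauge generator must be. It does not land in $\Tpoly^{-1}(M_\red)=\Cinfty(M_\red)$ under $p\circ P$, and the mechanism you invoke --- ``the reduced Poisson structure is invariant under inner Hamiltonian gauges'' --- is not a valid one: a gauge generated by a vector field does move Poisson bivectors, and a function cannot generate a gauge of bivectors at all. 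The correct argument (the one the paper gives) is that the induced gauge generator on $M_\red$, which by \cite[Prop.~4.9]{canonaco:1999a} is of the form $p\circ P^1(X^\infty_\hbar\vee\exp(\cdots))$, vanishes identically: by the discussion preceding \eqref{eq:Pinftysimpler} the Taylor coefficients $P_n$ annihilate any argument carrying nonzero $\Anti\liealg{g}^*$-degree, and $X^\infty_\hbar$ carries exactly such a degree. Hence the two reduced structures are literally equal, not merely gauge equivalent.

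On the homological-perturbation side you propose to identify the Neumann series for $\boldsymbol{\iota^*}$ with the gauge flow of $X^\infty_\hbar$ order by order, and you explicitly leave this ``combinatorial bookkeeping'' open --- so as written the proof is incomplete precisely at the step you identify as the main obstacle. This identification is also unnecessary: the paper instead invokes Proposition~\ref{prop:HomPerRedofEquivalences}, which already shows that the perturbation-lemma reduction sends equivalent pairs $(\pi_\hbar,J_\hbar)$ to reduced structures related by $T_\red=(\pi^*)^{-1}\circ\boldsymbol{\iota^*}{}'\circ T\circ\prol\circ\pi^*$, and then observes that $T_\red=\id$ here because $X^\infty_\hbar$ differentiates only in the $\liealg{g}^*$-direction while $\prol\pi^*u$ is constant in that direction. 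Replacing your open combinatorial claim by this appeal to Proposition~\ref{prop:HomPerRedofEquivalences}, and fixing the degree argument on the $L_\infty$-side, would close the gaps.
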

\begin{proof}
  We show at first that the reduction 
	procedures coincide on Maurer-Cartan elements 
	of the form $\hbar\pi_C$, i.e. where the quantum momentum map is just the 
	classical momentum map. Note that by Corollary~\ref{cor:EquivtoTrivialMC} every formal 
	Maurer-Cartan element $\hbar(\pi'-J')$
  is equivalent to such a $\hbar\pi_C$. Writing again $\pi_C^i \in 
	(\Sym^i\liealg{g}\otimes  T_\poly^1(C))[[\hbar]]$, the 
	reduced Poisson structure via $\tilde{\mathrm{T}}_\red$ is easy to describe, namely by
  \begin{equation*}
    \hbar\pi_\red
    =
		\sum_{k=1}^\infty \frac{1}{k!} \tilde{\mathrm{T}}^1_{\red,k}(\hbar\pi_C \vee \cdots \vee \hbar\pi_C)
		=
    \sum_{k=1}^\infty \frac{\hbar^k}{k!} p\circ P_k(\pi_C,\dots,\pi_C)
    =
    p( \hbar\pi_C^0).
  \end{equation*}
	In the reduction via the homological perturbation lemma one has  
  $\boldsymbol{\iota^*} = \iota^*$ and thus the reduced formal Poisson
  structures coincide by the same reasons as in the classical setting of 
	Proposition~\ref{prop:ClassicalComparison}. 
	
	The idea is now to use the explicit equivalence from 
	Proposition~\ref{prop:ExplicitEquivalenceXinfty}. Let $\hbar(\pi - J')$ be a 
	formal Maurer-Cartan element in $T_\Tay(C\times\liealg{g}^*)[[\hbar]]$ 
	and $X^\infty_\hbar$ be the equivalence between the formal Maurer-Cartan 
	elements $(\pi_\KKS  + \pi, J+\hbar J')$ and 
	$(\pi_\KKS + \pi_C, J)$. 
	The reduction via the homological perturbation lemma maps both 
	Poisson structures to the same formal Poisson structure on $M_\red$. This 
	follows from Formula~\eqref{eq:HomPerRedofEquivalences} for the equivalence 
	between the reduced Poisson structures since $X^\infty_\hbar$ differentiates only in 
	direction of $\liealg{g}^*$.
	We only have to show that $\tilde{\mathrm{T}}_\red$ also maps both to the same one.
  But $X_\hbar^\infty$ induces the following equivalence on the level 
	of the reduced manifold
	\begin{equation*}
	  p \circ P^1(X^\infty_\hbar \vee \exp(\exp(X^\infty_\hbar) \acts \hbar(\pi-J')))
		=
		0,
	\end{equation*}
	see e.g. \cite[Prop.~4.9]{canonaco:1999a}, whence both reduced structures are again 
	equal. This proves the theorem.
\end{proof}

\appendix 
\section{BRST-Like Reduction of Formal Poisson Structures}
\label{sec:BRSTlikeReduction}

In this section we want to recall a reduction scheme for formal
Poisson structures similarly to the reduction of star products in
\cite{gutt.waldmann:2010a} resp. to the BRST reduction as formulated
in \cite{bordemann.herbig.waldmann:2000a}. We recall at first the
homological perturbation lemma adapted to our setting, see
\cite[Thm.~2.4]{crainic:2004a:pre} and
\cite[Chapter~2.4]{reichert:2017b}.

\subsection{Homological Perturbation Lemma}

\begin{definition}[Homotopy equivalence data]
  \label{defi:homotopyequivalencedata}
  A \emph{homotopy equivalence data} (HE data) consists of two chain
  complexes $(C,\D_C)$ and $(D,\D_D)$ over a commutative ring
  $\ring{R}$ together with two quasi-isomorphisms
  \begin{equation}
    p \colon 
    C 
    \longrightarrow 
    D 
    \quad\text{and}\quad 
    i \colon 
    D 
    \longrightarrow 
    C
  \end{equation}
  and a chain homotopy 
  \begin{equation}
    h \colon 
    D 
    \longrightarrow 
    D 
    \quad \text{with} \quad
    \id_D - pi 
    = 
    \D_D h + h \D_D
  \end{equation}
  between $pi$ and $\id_D$.
\end{definition} 
For a shorter notation we will denote such a HE data by 
\begin{equation*}
  p \colon 
  (C,\D_C) 
  \rightleftarrows 
  (D,\D_D)  \colon i,h.
\end{equation*}
Moreover, we say that a graded map $B\colon D_\bullet \longrightarrow
D_{\bullet -1}$ with $(\D_D + B)^2=0$ is a \emph{perturbation} of the
HE data. The perturbation is called \emph{small} if $\id_D + B h$ is
invertible, and the homological perturbation lemma states that in this
case the perturbed HE data is a again a HE data, see
\cite[Thm.~2.4]{crainic:2004a:pre} for a proof.

\begin{proposition}[Homological perturbation lemma]
  \label{lemma:homologicalperturbationlemma}
  Let
  \begin{equation*}
    p \colon 
    (C,\D_C) 
    \rightleftarrows 
    (D,\D_D)  \colon i,h
  \end{equation*}
  be a HE data and let $B$ be small perturbation of $\D_D$, then the
  perturbed data
  \begin{equation}
    P \colon 
    (C,\widehat{\D}_C) 
    \rightleftarrows 
    (D,\widehat{\D}_D)  \colon I,H
  \end{equation}
  with
  \begin{equation}
    \begin{alignedat}{5}
      \label{eq:deformedhomologicalmaps}
      A 
      &= 
      (\id_D + Bh)^{-1}B, \quad
      &&
      \widehat{\D}_D 
      &&= 
      \D_D + B, \quad
      &&
      \widehat{\D}_C 
      &&= 
      \D_C + i Ap   ,    \\
      P
      &= 
      p -hAp ,
      &&\; 
      I 
      &&=  
      i-iAh  ,
      && 
      H 
      &&= 
      h-hAh  
    \end{alignedat}
  \end{equation}
  is again a HE data.
\end{proposition}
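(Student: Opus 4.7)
The plan is to verify, one by one, the defining properties of HE data for the perturbed quintuple $(P, I, H, \widehat{\D}_C, \widehat{\D}_D)$. The backbone of every verification will be the pair of algebraic identities
\begin{equation*}
  A + B h A \;=\; B \;=\; A + A h B,
\end{equation*}
which follow from $A = (\id_D + Bh)^{-1} B$ by the observation $(\id_D + Bh) B = B + BhB = B(\id_D + hB)$, so that $(\id_D + Bh)^{-1} B = B(\id_D + hB)^{-1}$ and $A$ satisfies both a left- and a right-sided inverse identity. In practice I would keep in mind the geometric-series form $A = \sum_{k \geq 0} (-Bh)^k B = \sum_{k \geq 0} B(-hB)^k$, so that $A - B = -BhA = -AhB$ becomes tautological and one can peel off factors one at a time.

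First I would check $\widehat{\D}_D^2 = 0$, which is the perturbation hypothesis, and then $\widehat{\D}_C^2 = 0$. Using the unperturbed chain-map equations $\D_C i = i \D_D$ and $p \D_C = \D_D p$ one rewrites $\widehat{\D}_C^2 = i(\D_D A + A \D_D + A p i A) p$; substituting $pi = \id_D - \D_D h - h \D_D$ from the unperturbed homotopy equation and $\D_D B + B \D_D = -B^2$ from the perturbation relation, and then repeatedly applying the two $A$-identities, collapses the bracketed expression to zero. The chain-map equations $\widehat{\D}_D P = P \widehat{\D}_C$ and $\widehat{\D}_C I = I \widehat{\D}_D$, as well as the perturbed homotopy equation $\id_D - PI = \widehat{\D}_D H + H \widehat{\D}_D$, follow by the same expand-and-collapse recipe: expand into monomials in $p$, $i$, $h$, $A$, $B$, $\D_D$, rewrite differentials via the unperturbed data, and pair off the residual terms via $A + BhA = B$ and $A + AhB = B$.

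It remains to show that $P$ and $I$ are quasi-isomorphisms. In the situations where the lemma is applied here, there is a complete descending filtration preserved by all of the maps (by powers of $\hbar$ in our case) such that the perturbation $B$ strictly raises the filtration degree. On the associated graded the perturbed differentials therefore reduce to $\D_C, \D_D$ and the perturbed maps to $p, i$; the standard comparison theorem for the filtration spectral sequence then lifts the quasi-isomorphism property from $(p,i)$ to $(P,I)$.

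The main obstacle is bookkeeping rather than ideas: intermediate expressions carry many terms and sign management is delicate since $h$ has degree $+1$ while $A$ and $B$ have degree $-1$. Adopting the geometric-series form of $A$ and systematically peeling off one factor of $Bh$ or $hB$ at a time via $A - B = -BhA = -AhB$ is the cleanest route; it in fact allows one to replace each of the verifications by two short inductions, one on the number of $A$'s appearing and one on the number of $h$-factors sandwiched between them.
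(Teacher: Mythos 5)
The paper gives no proof of this proposition at all --- it defers to \cite[Thm.~2.4]{crainic:2004a:pre} --- so there is no in-paper argument to compare against; your outline reproduces the standard direct verification found in that reference. The algebraic core is right and can be organized even more tightly than you suggest: everything (the identity $\widehat{\D}_C^2=0$, both chain-map equations, and the perturbed homotopy equation) reduces to the single operator identity $\D_D A + A\D_D + A(\id_D - \D_D h - h\D_D)A = 0$, which follows from precisely the two relations you isolate, $A+BhA=B=A+AhB$, together with $\D_D B+B\D_D=-B^2$; for instance $\widehat{\D}_C^2 = i\,(\D_D A + A\D_D + A(pi)A)\,p$ and $\id_D-PI-(\widehat{\D}_D H+H\widehat{\D}_D)$ both become that expression sandwiched between maps. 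Two small remarks: no Koszul signs actually enter, since all identities are compositions of fixed linear maps and the homotopy relation carries no signs, so the bookkeeping is lighter than you fear; and your derivation of $A+AhB=B$ via $B(\id_D+hB)^{-1}$ tacitly uses invertibility of $\id_D+hB$ (true, with inverse $\id_D-hA$), whereas it is cleaner to write $A(\id_D+hB)=(\id_D+Bh)^{-1}B(\id_D+hB)=(\id_D+Bh)^{-1}(\id_D+Bh)B=B$ directly.

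The one genuine gap is the quasi-isomorphism claim. The proposition assumes only that $B$ is small, whereas your argument that $P$ and $I$ are quasi-isomorphisms invokes a complete filtration strictly raised by $B$ --- an additional hypothesis not in the statement. What the perturbed homotopy equation gives for free is $P_*I_*=\id$ on $\Ho(D,\widehat{\D}_D)$, so each of $P_*$, $I_*$ is invertible as soon as the other is; closing the loop requires extra input. Your filtration argument supplies it in every application made in this paper (the $\hbar$-adic filtration on $\Anti^\bullet\liealg{g}\otimes\Cinfty(M)[[\hbar]]$, with $B=\ins(J')$ of positive $\hbar$-order), and in the special shape \eqref{eq:simplehedata} actually used one gets it even more cheaply: $C$ is concentrated in degree $0$ and $h_0\circ\prol=0$ forces $IP=ip=\id_C$ on the nose, which together with $P_*I_*=\id$ settles the matter with no filtration at all. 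So your proof covers every use the paper makes of the lemma, but as a proof of the proposition in its stated generality the final step must either be argued separately or the hypothesis of a compatible complete filtration must be added to the statement.
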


We will even encounter a simpler situation, namely that the complex
$C$ is concentrated in degree $0$ and $D_n=0$ for $n<0$:

\begin{equation}
  \label{eq:simplehedata}
  \begin{tikzcd}[row sep = large, column sep = large]
    0    	&
    D_0   \arrow[l]
    \arrow[d,shift left=-.75ex,swap,"i"] 
    \arrow[r,shift left=-.75ex,swap,"h_0"]      &		
    D_1   \arrow[l,shift left=-.75ex,swap,"\D_{D,1}"]
    \arrow[r,shift left=-.75ex,swap,"h_1"]  &    
    \cdots\arrow[l,shift left=-.75ex,swap,"\D_{D,2}"]  \\
    0     &		 
    C_0   \arrow[l] 
    \arrow[u,shift left=-.75ex,swap,"p"]  	&	
    0	    \arrow[l]	 	&          	
  \end{tikzcd}
\end{equation}
In this case, the perturbed HE data corresponding to a small
perturbation $B$ according to \eqref{eq:deformedhomologicalmaps} is
given by
\begin{equation*}
  P 
  = 
  p, \quad 
  I 
  = 
  i - i(\id_D + B_1 h_0)^{-1}B_1h_0 , \quad
  H 
  = 
  h-h(\id_D + Bh)^{-1}Bh
\end{equation*}
and, using the geometric power series, this can be simplified to
\begin{equation}
  \label{eq:simplifieddeformedhomologicalstuff}
  P 
  = 
  p, \quad 
  I 
  = 
  i(\id_D + B_1 h_0)^{-1}, \quad 
  H 
  = 
  h(\id_D + Bh)^{-1}.
\end{equation}	
Here we denote by $B_1\colon D_1 \longrightarrow D_0$ the degree one
component of $B$, analogously for $h$.

\subsection{Formal Koszul Complex}

We start with the classical Koszul complex $\Anti^\bullet\liealg{g}
\otimes \Cinfty(M)$ that can be interpreted as the smooth functions on
$M$ with values in the complexified Grassmann algebra of
$\liealg{g}$. The Koszul differential $\del$ is given by
\begin{equation}
  \del \colon 
  \Anti^q\liealg{g}\otimes \Cinfty(M) 
  \longrightarrow
  \Anti^{q-1}\liealg{g}\otimes \Cinfty(M) , \hspace{5pt} 
  a 
  \mapsto 
  \ins(J_0)a = J_{0,i} \ins(\basis{e}^i)a,
  \label{eq:koszuldifferential}
\end{equation}
where $\ins$ denotes the left insertion and $J_0 = J_{0,i}\basis{e^i}$
the decomposition of $J_0$ with respect to a basis
$\basis{e}^1,\dots,\basis{e}^n$ of $\liealg{g}^*$. The corresponding
dual basis will be denoted by $\basis{e}_1,\dots,\basis{e}_n$ and
$\del^2 =0$ follows immediately with the commutativity of the
pointwise product in $\Cinfty(M)$. The differential $\del$ is also a
derivation with respect to associative and super-commutative product
on the Koszul complex, consisting of the $\wedge$-product on
$\Anti^\bullet \liealg{g}$ tensored with the pointwise product on the
functions.  Moreover, it is invariant with respect to the induced
$\liealg{g}$-representation
\begin{equation}
	 \label{eq:ActionofgonKoszulComplex}
	 \liealg{g}\ni \xi 
	 \mapsto 
	 \rho(\xi)
	 =
	 \ad(\xi) \otimes \id - \id \otimes \Lie_{\xi_M} \in 
	 \End(\Anti^\bullet \liealg{g}\otimes \Cinfty(M))
\end{equation}
as we have
\begin{align*}
  \del \rho(\basis{e}_a) ( x \otimes f )
	& =
	f^k_{aj} \basis{e}_k\wedge \ins(\basis{e}^j)\wedge\ins(\basis{e}^i) x \otimes J_{0,i}f 
	+ f^i_{aj}\ins(\basis{e}^j)x \otimes J_{0,i} f
	+ \ins(\basis{e}^i)x \otimes J_{0,i} \{J_{0,a},f\}_0       \\
	& =
	\rho(\basis{e}_a)\del ( x \otimes f)
\end{align*}
for all $x \in \Anti^\bullet \liealg{g}$ and $f\in \Cinfty(M)$.

One can show that the Koszul complex is acyclic in positive degree
with homology $\Cinfty(C)$ in order zero, and that one has a
$\group{G}$-equivariant homotopy
\begin{equation}
  h_i\colon 
  \Anti^i  \liealg{g} \otimes \Cinfty(M) 
  \longrightarrow 
  \Anti^{i+1}  \liealg{g} \otimes \Cinfty(M) 
\end{equation}
given on $C \subset M_\nice \subset C \times \liealg{g}^*$ by
\begin{equation*}
  h_k(x)(c,\mu)
  = 
  \basis{e}_i \wedge \int_0^1 t^k 
  \frac{\del x}{\del \mu_i}(c,t\mu) \D t,
  \quad \text{with} \quad 
  \del h_0 = \id_0 - \prol \iota^*
  \quad \text{and} \quad 
  h_0 \circ \prol = 0,
\end{equation*}
where $x \in \Anti^k \liealg{g} \otimes \Cinfty(C\times\liealg{g}^*)$
and $(c,\mu) \in C \times\liealg{g}^*$, see
\cite[Lemma~6]{bordemann.herbig.waldmann:2000a} and
\cite{gutt.waldmann:2010a} for the notation $M_\nice$.  In other
words, this means that
\begin{equation*}
	\prol \colon 
	(\Cinfty(C),0) 
	\rightleftarrows 
	(\Anti^\bullet \liealg{g}\otimes \Cinfty(M),\del)  
	\colon \iota^*,h
\end{equation*}
is a HE data of the special type of \eqref{eq:simplehedata}, i.e. we
have the following diagram:
\begin{figure*}[h]
	\center
	\begin{tikzcd}[row sep = large, column sep = large]
			0    	&
			\Cinfty(M) \arrow[l]\arrow[d,shift left=-.75ex,swap,"\iota^*"] 
			\arrow[r,shift left=-.75ex,swap,"h_0"]      &		
			\Anti^1\liealg{g}\otimes\Cinfty(M) \arrow[l,shift left=-.75ex,swap,"\del_1"]
			\arrow[r,shift left=-.75ex,swap,"h_1"]  &    
			\cdots \arrow[l,shift left=-.75ex,swap,"\del_2"]  \\
			0     &		 
			\Cinfty(C)  \arrow[l] \arrow[u,shift left=-.75ex,swap,"\prol"]  	&	
			0 \arrow[l]	 	&          	
	\end{tikzcd}
\end{figure*}

Let now $\pi_\hbar$ be an invariant formal Poisson structure with
formal equivariant momentum map $J_\hbar$. In order to take care of
the formal momentum map, we extend the Koszul complex $\hbar$-linearly
and gain the HE data
\begin{equation*}
  \prol \colon (\Cinfty(C)[[\hbar]],0) \rightleftarrows 
  (\Anti^\bullet \liealg{g}\otimes \Cinfty(M)[[\hbar]],\del) 
  \colon \iota^*,h.
\end{equation*}
Since the formal momentum map $J_\hbar$ is a deformation of $J_0$ in
the sense that the difference $J_\hbar - J_0 = J' \colon \liealg{g}
\rightarrow \hbar \Cinfty(M)[[\hbar]]$ starts in order one of $\hbar$,
the formal differential $\del_\hbar = \ins(J_\hbar) = \del + B$ with
$B= \ins(J')$ on $\Anti^\bullet \liealg{g} \otimes
\Cinfty(M)[[\hbar]]$ is a small perturbation in the sense of the
homological perturbation lemma
\ref{lemma:homologicalperturbationlemma}. Indeed, $\del_\hbar^2 =0$
follows for the same reasons as $\del^2=0$, and $\id + Bh$ is
invertible as formal power series since $Bh$ stars in order one of
$\hbar$. Consequently, the corresponding perturbed HE data
\begin{equation*}
  \boldsymbol{\prol} \colon 
	(\Cinfty(C)[[\lambda]],0) 
	\rightleftarrows 
  (\Anti^\bullet \liealg{g}\otimes \Cinfty(M)[[\lambda]],\del_\hbar)
  \colon \boldsymbol{\iota^*},\boldsymbol{h}
\end{equation*}
is given by
\begin{equation}
  \label{eq:deformedmapsbyhompertlemma}
  \boldsymbol{\prol} 
	= 
	\prol, \quad 
  \boldsymbol{\iota^*}
	= 
	\iota^*(\id +B_1 h_0)^{-1} ,  \quad
  \boldsymbol{h} 
	= 
	h(\id +B h)^{-1},
\end{equation}	
compare \eqref{eq:simplifieddeformedhomologicalstuff}. In particular, 
we have $\boldsymbol{\iota^*} \del_\hbar = 0$, 
\begin{equation}
  \label{eq:defChainHom}
  \id_{\Anti^\bullet \liealg{g}\otimes\Cinfty(M)[[\hbar]]}
	- \prol \boldsymbol{\iota^*} 
	= 
	\del_\hbar \boldsymbol{h} + \boldsymbol{h} \del_\hbar
\end{equation}
as well as $\boldsymbol{\iota^*}\prol= \id_{\Cinfty(C=[[\hbar]])}$
because of $h_0 \prol = 0$. Moreover, $\del_\hbar$ is still a
$\liealg{g}$-equivariant derivation of the algebra
structure. Therefore, also $\boldsymbol{\iota^*}$ and $\boldsymbol{h}$
are $\liealg{g}$-equivariant as all involved maps are.

We denote the image of the deformed Koszul differential by
\begin{equation*}
  \mathcal{J}_\hbar 
  =
  \image \del_\hbar\at{\Anti^1\liealg{g} \otimes \Cinfty(M)[[\hbar]]} 
  =
  \SP{J_i}_i.
\end{equation*}
Since $\prol \boldsymbol{\iota^*}$ is a projection with kernel
$\mathcal{J}_\hbar$, compare \eqref{eq:defChainHom}, we get with the
injectivity of $\prol$
\begin{equation*}
  \mathcal{J}_\hbar
  =
  \ker \boldsymbol{\iota^*}\at{\Cinfty(M)[[\hbar]]}.
\end{equation*}
As $\del_\hbar$ is $\Cinfty(M)[[\hbar]]$-linear, $\mathcal{J}_\hbar$
is an ideal in $\Cinfty(M)[[\hbar]]$ with respect to the pointwise
product.  Moreover, $\mathcal{J}_\hbar$ is a Poisson subalgebra of
$(\Cinfty(M)[[\hbar]], \{\,\cdot\,, \,\cdot\,\}_\hbar)$ because of
\begin{align*}
  \boldsymbol{\iota^*} \{f,g\}_\hbar
  & =
  \boldsymbol{\iota^*}( f^i g^j \{J_i,J_j\}_\hbar + f^iJ_j\{J_i,g^j\}_\hbar 
  + J_ig^j\{f^i,J_j\}_\hbar + J_iJ_j\{f^i,g^j\}_\hbar)
  = 
  0
\end{align*}
for $f = f^i J_i, g = g^j J_j \in \mathcal{J}_\hbar$.  As usual, one
can consider the Poisson normalizer
\begin{equation*}
  \mathcal{B}_\hbar 
  =
  \{f\in \Cinfty(M)[[\hbar]] \mid 
  \{f, \mathcal{J}_\hbar\} \subset \mathcal{J}_\hbar \},
\end{equation*} 
the biggest Poisson subalgebra containing $\mathcal{J}_\hbar$ as
Poisson ideal.  Then we know that the quotient is a Poisson algebra
and we even have the following:

\begin{proposition}
  \label{prop:reducedpoissonvianormalizer}
  There exists a unique formal Poisson structure $\pi_\red$ on 
  $M_\red$ such that
  \begin{equation*}
    \mathcal{B}_\hbar / \mathcal{J}_\hbar 
    \ni [f]
    \longmapsto
    \boldsymbol{\iota^*}f \in 
    \pi^*\Cinfty(M_\red)[[\hbar]]
  \end{equation*}
  is an isomorphism of Poisson algebras with inverse $\pi^*u \mapsto
  [\prol \pi^* u]$.
\end{proposition}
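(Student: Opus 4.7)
The plan is to show in three steps that $[f]\mapsto\boldsymbol{\iota^*}f$ is a well-defined Poisson algebra isomorphism whose inverse is the claimed map, and then to read off $\pi_\red$ by transport of structure.

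First, I would verify the two inclusions $\boldsymbol{\iota^*}(\mathcal{B}_\hbar)\subseteq \pi^*\Cinfty(M_\red)[[\hbar]]$ and $\prol(\pi^*\Cinfty(M_\red)[[\hbar]])\subseteq\mathcal{B}_\hbar$. For $f\in\mathcal{B}_\hbar$ and $\xi\in\liealg{g}$, the formal momentum map property $\xi_M=\{\argument,J_\hbar(\xi)\}_\hbar$ gives $\Lie_{\xi_M}f=-\{J_\hbar(\xi),f\}_\hbar\in\mathcal{J}_\hbar$, so $\liealg{g}$-equivariance of $\boldsymbol{\iota^*}$ implies $\Lie_{\xi_C}(\boldsymbol{\iota^*}f)=0$; together with $G$-equivariance (inherited from all constituents of $\boldsymbol{\iota^*}$) this places $\boldsymbol{\iota^*}f$ in $\pi^*\Cinfty(M_\red)[[\hbar]]$. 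Symmetrically, for $\pi^*u\in\pi^*\Cinfty(M_\red)[[\hbar]]$ the element $\prol\pi^*u$ is $G$-invariant, so $\{\prol\pi^*u,J_\hbar(\xi)\}_\hbar=-\Lie_{\xi_M}\prol\pi^*u$ is annihilated by $\boldsymbol{\iota^*}$, i.e.\ lies in $\mathcal{J}_\hbar$, hence $\prol\pi^*u\in\mathcal{B}_\hbar$.

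Second, I would establish that the induced maps are mutually inverse on the quotient. One direction is $\boldsymbol{\iota^*}\prol=\id$ on $\Cinfty(C)[[\hbar]]$, which is immediate from $h_0\prol=0$ together with the explicit formula \eqref{eq:deformedmapsbyhompertlemma}. For the other direction, the chain homotopy \eqref{eq:defChainHom} restricted to $D_0=\Cinfty(M)[[\hbar]]$ reduces to $f-\prol\boldsymbol{\iota^*}f=\del_\hbar\boldsymbol{h}f$; expanding $\boldsymbol{h}f=g^i\basis{e}_i$ this equals $J_{\hbar,i}g^i\in\mathcal{J}_\hbar$, so $[\prol\boldsymbol{\iota^*}f]=[f]$.

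Third, I would transport the algebra and Poisson structures. Because $\prol$ is the pullback along the projection $C\times\liealg{g}^*\to C$ it is a multiplicative homomorphism, and using the previous step together with the fact that $\mathcal{J}_\hbar$ is an ideal for the pointwise product, I obtain for $f,g\in\mathcal{B}_\hbar$
\begin{equation*}
	\boldsymbol{\iota^*}(fg)
	=\boldsymbol{\iota^*}\!\bigl(\prol\boldsymbol{\iota^*}f\cdot\prol\boldsymbol{\iota^*}g\bigr)
	=\boldsymbol{\iota^*}\prol\bigl(\boldsymbol{\iota^*}f\cdot\boldsymbol{\iota^*}g\bigr)
	=\boldsymbol{\iota^*}f\cdot\boldsymbol{\iota^*}g,
\end{equation*}
which upgrades the bijection to an isomorphism of commutative algebras. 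The Poisson bracket on $M_\red$ is then defined by $\pi^*\{u,v\}_\red:=\boldsymbol{\iota^*}\{\prol\pi^*u,\prol\pi^*v\}_\hbar$; step one places the right-hand side in $\pi^*\Cinfty(M_\red)[[\hbar]]$, and skew-symmetry, Jacobi and Leibniz are inherited from $\{\argument,\argument\}_\hbar$ via the algebra isomorphism just established. The bracket $\{\argument,\argument\}_\red$ determines a unique formal Poisson bivector $\pi_\red$ on $M_\red$, and uniqueness of the whole package follows from the bijectivity proven in the second step.

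The most delicate point I expect is the multiplicativity in the third paragraph: the deformed restriction $\boldsymbol{\iota^*}=\iota^*(\id+B_1h_0)^{-1}$ fails to be an algebra morphism on all of $\Cinfty(M)[[\hbar]]$, and the argument crucially relies on first replacing each factor by $\prol\boldsymbol{\iota^*}$ of itself modulo $\mathcal{J}_\hbar$ so that multiplicativity is reduced to that of the undeformed prolongation $\prol$. Once this trick is in place, the rest of the proof is a diagrammatic consequence of the two inclusions and the chain homotopy identity.
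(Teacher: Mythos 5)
Your proposal is correct and follows essentially the same route as the paper's proof: well-definedness via equivariance of $\boldsymbol{\iota^*}$ and the momentum map property, mutual inverses from $\boldsymbol{\iota^*}\prol=\id$ and $\id-\prol\boldsymbol{\iota^*}=\del_\hbar\boldsymbol{h}$, multiplicativity by replacing factors modulo the ideal $\mathcal{J}_\hbar$, and the same defining formula $\pi^*\{u,v\}_\red=\boldsymbol{\iota^*}\{\prol\pi^*u,\prol\pi^*v\}_\hbar$. The only (harmless) variations are that you replace both factors by $\prol\boldsymbol{\iota^*}$ and invoke multiplicativity of $\prol$, where the paper replaces one factor and uses the module identity $h_0(f\prol\phi)=\prol\phi\cdot h_0f$, and that you obtain Jacobi and Leibniz by transport of structure rather than by the paper's explicit computations.
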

\begin{proof}
  We have for $u \in \Cinfty(M_\red)[[\hbar]], j = j^k J_k \in
  \mathcal{J}_\hbar$ and $f \in \mathcal{B}_\hbar$
  \begin{equation*}
    \boldsymbol{\iota^*}\{\prol \pi^* u, j\}_\hbar
    =
    \boldsymbol{\iota^*}(j^k\{\prol \pi^* u, J_k\}_\hbar + 
    J_k\{ \prol\pi^* u , j^k\}_\hbar)
    =
    \boldsymbol{\iota^*}(j^k \Lie_{(e_k)_M}\prol\pi^*u)
    =
    0
  \end{equation*}
  as well as 
  \begin{equation*}
    \Lie_{(e_i)_C}\boldsymbol{\iota^*} f
    =
    \boldsymbol{\iota^*} \Lie_{(e_i)_M}f
    =
    \boldsymbol{\iota^*} \{f, J_i\}_\hbar 
    =
    0,
  \end{equation*}
  thus the maps are both well-defined.  The fact that the maps are
  mutually inverse is clear since
  \begin{equation*}
    \boldsymbol{\iota^*}\prol
    =
    \id
    \quad \text{and} \quad
    \id- \prol \boldsymbol{\iota^*} 
    = 
    \del_\hbar \boldsymbol{h} \in 
    \mathcal{J}_\hbar.
  \end{equation*}
  The compatibility with the pointwise product follows from the
  explicit form $\boldsymbol{\iota^*}= \iota^* \circ \sum_k (- B_1
  h_0)^k$ and the fact that
  \begin{equation*}
    h_0( f \prol \phi)
    =
    \prol\phi \cdot h_0 f,
  \end{equation*}
  which directly yields
  \begin{equation*}
    \boldsymbol{\iota^*}([fg])
    =
    \boldsymbol{\iota^*}([f\prol \boldsymbol{\iota^*}g])
    =
    \boldsymbol{\iota^*}f \cdot \boldsymbol{\iota^*}g.
  \end{equation*}
  The compatibility of $\prol$ in the setting $M=M_\nice$ in the
  notation of \cite{gutt.waldmann:2010a} is clear since it is just a
  pull-back.  In addition, we get a unique induced formal Poisson
  structure on $M_\red$ via
  \begin{equation*}
    \pi^*\{u,v\}_\red
    =
    \boldsymbol{\iota^*}\{[\prol\pi^*u],[\prol\pi^*v]\}_\hbar.
  \end{equation*}
  Antisymmetry is clear and also the Jacobi identity follows directly,
  where we omit the sign for the equivalence classes:
  \begin{align*}
    \pi^* \{u,\{v,w\}_\red\}_\red
    & =
    \boldsymbol{\iota^*}\{\prol\pi^* u , \prol\boldsymbol{\iota^*}
    \{\prol\pi^* v,\prol \pi^* w\}_\hbar \}_\hbar \\
    & =
    \boldsymbol{\iota^*}\left(\{\{\prol\pi^* u , 
    \prol\pi^* v\}_\hbar,\prol \pi^* w \}_\hbar 
    + \{\prol\pi^* v , 
    \{\prol\pi^* u,\prol \pi^* w\}_\hbar \}_\hbar\right)   \\
    & =
    \pi^*\left( \{\{u,v\}_\red,w\}\}_\red
    +\{v,\{u,w\}_\red\}_\red \right).
  \end{align*}
  Concerning the Leibniz identity we get
  \begin{align*}
    \pi^*\{u,vw\}_\red
    & =
    \boldsymbol{\iota^*}\{\prol\pi^* u, \prol(\pi^*v)\prol(\pi^*w)\}_\hbar \\
    & =
    \boldsymbol{\iota^*} \left( 
    \{\prol\pi^* u, 	\prol(\pi^*v)\}_\hbar\; \prol(\pi^*w)
    +
    \prol(\pi^*v) \;\{\prol\pi^* u, \prol(\pi^*w)\}_\hbar
    \right)  \\
    & =
    \pi^*(v\{u,w\}_\red + \{u,v\}_\red w)
  \end{align*}
  since $\boldsymbol{\iota^*}(f \prol\phi) = \boldsymbol{\iota^*}(f)\phi$.
\end{proof}

Now we want to show that the reduction procedure is compatible with
equivalences, i.e. that equivalent formal Poisson structures with
formal momentum maps are reduced to equivalent reduced Poisson
structures.

\begin{proposition}
  \label{prop:HomPerRedofEquivalences}
  Let $T = \exp(X_\hbar) \colon (\pi_\hbar,J_\hbar) \rightarrow
  (\pi_\hbar',J_\hbar')$ be an equivalence of formal invariant Poisson
  structures with momentum maps, i.e. $X_\hbar \in \hbar
  \Secinfty(TM)[[\hbar]]$ such that
  \begin{equation}
    \label{eq:equivformalpoissonmomentum}
    T \pi_\hbar
    =
    \pi_\hbar' 
    \quad \text{and} \quad 
    T \circ J_\hbar
    =
    J_\hbar'.
  \end{equation} 
  Then one has even $X_\hbar \in \hbar
  \Secinfty(TM)^\group{G}[[\hbar]]$ and
  \begin{equation}
	  \label{eq:HomPerRedofEquivalences}
    T_\red
    =
    (\pi^*)^{-1} \circ \boldsymbol{\iota^*}' \circ T \circ \prol \circ \pi^*
  \end{equation}
  is an equivalence between the reduced formal Poisson structures 
  $\pi_\red$ and $\pi_\red'$.
\end{proposition}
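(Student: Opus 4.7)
The plan is to reduce the claim to Proposition~\ref{prop:reducedpoissonvianormalizer} in three stages: first establishing the $\group{G}$-invariance of $X_\hbar$, then showing that $T$ descends to a Poisson algebra isomorphism $\mathcal{B}_\hbar/\mathcal{J}_\hbar \to \mathcal{B}_\hbar'/\mathcal{J}_\hbar'$, and finally reading off the stated formula for $T_\red$ under the canonical identifications of these quotients with $\pi^*\Cinfty(M_\red)[[\hbar]]$.

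For the invariance of $X_\hbar$, I would exploit that \emph{the same} fundamental vector field $\xi_M$ is Hamiltonian on both sides: $\xi_M = \{\argument, J_\hbar(\xi)\}_\hbar = \{\argument, J'_\hbar(\xi)\}'_\hbar$. Using $T\{f,g\}_\hbar = \{Tf,Tg\}'_\hbar$ (which follows from $T\pi_\hbar = \pi_\hbar'$) together with $T\circ J_\hbar = J'_\hbar$, a short computation gives $T\circ \Lie_{\xi_M} = \Lie_{\xi_M}\circ T$ on $\Cinfty(M)[[\hbar]]$. Writing $T = \exp(\Lie_{X_\hbar})$ and equating commutators yields $\Lie_{[X_\hbar,\xi_M]} = 0$, hence $[X_\hbar,\xi_M]=0$ for every $\xi\in\liealg{g}$, which is infinitesimal invariance of $X_\hbar$ and thus $\group{G}$-invariance on the identity component.

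For the descent, the key observations are that $T$ is an algebra morphism sending the generators $J_{\hbar,i}$ of $\mathcal{J}_\hbar$ to the generators $J'_{\hbar,i}$ of $\mathcal{J}_\hbar'$, and that for $f\in \mathcal{B}_\hbar$ we have $\{Tf, J'_{\hbar,i}\}'_\hbar = T\{f,J_{\hbar,i}\}_\hbar \in T(\mathcal{J}_\hbar) = \mathcal{J}_\hbar'$. This gives $T(\mathcal{J}_\hbar)=\mathcal{J}_\hbar'$ and $T(\mathcal{B}_\hbar)\subseteq \mathcal{B}_\hbar'$, so $T$ descends to a Poisson algebra isomorphism between the quotients. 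Conjugating with the identifications $[f]\mapsto \boldsymbol{\iota^*}f$ and $\pi^*u\mapsto [\prol\pi^*u]$ from Proposition~\ref{prop:reducedpoissonvianormalizer} yields precisely $T_\red = (\pi^*)^{-1}\circ \boldsymbol{\iota^*}'\circ T\circ \prol\circ \pi^*$, and it is automatically an algebra morphism intertwining the two reduced Poisson brackets.

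Finally, to conclude that $T_\red$ is an \emph{equivalence}, I would check it is a formal deformation of the identity. By \eqref{eq:DeformedRestriction} we have $\boldsymbol{\iota^*}' = \iota^* + \mathcal{O}(\hbar)$, and $T = \id + \mathcal{O}(\hbar)$, so the zeroth order of $T_\red$ equals $(\pi^*)^{-1}\iota^*\prol\pi^* = \id$; hence $T_\red = \exp(Y_\hbar)$ for a unique $Y_\hbar\in \hbar\Secinfty(TM_\red)[[\hbar]]$. The main obstacle of the argument is the invariance of $X_\hbar$: the descent step is essentially formal bookkeeping given Proposition~\ref{prop:reducedpoissonvianormalizer}, but the invariance genuinely requires combining the momentum map identity with the Poisson-intertwining property of $T$ rather than using either in isolation.
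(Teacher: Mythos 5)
Your proposal is correct and follows essentially the same route as the paper's proof: invariance of $X_\hbar$ via $T\circ\Lie_{\xi_M}=\Lie_{\xi_M}\circ T$ from the intertwining of brackets and momentum maps, descent of $T$ through $\mathcal{J}_\hbar\to\mathcal{J}_\hbar'$ and $\mathcal{B}_\hbar\to\mathcal{B}_\hbar'$, and identification of the induced quotient map with \eqref{eq:HomPerRedofEquivalences} via Proposition~\ref{prop:reducedpoissonvianormalizer}. The only place the paper is more careful is the last step: knowing $T_\red=\id+\mathcal{O}(\hbar)$ only gives $T_\red=\exp(\hbar D)$ for a formal \emph{linear operator} $D$, and one must invoke the multiplicativity of $T_\red$ (which you do establish earlier) to conclude that $D$ is a derivation, i.e.\ that $Y_\hbar$ is genuinely a vector field.
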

\begin{proof}
  The proof is analogue to the case of star products in
  \cite[Lemma~4.3.1]{reichert:2017b}. At first, as in
  \cite[Prop.~6.2.20]{waldmann:2007a} one can show that $T \pi_\hbar =
  \pi_\hbar'$ is equivalent to
  \begin{equation*}
    T\{f,g\}_\hbar
    =
    \{Tf,Tg\}_\hbar'.
  \end{equation*}
  But then \eqref{eq:equivformalpoissonmomentum} implies
  \begin{equation*}
    \Lie_{\xi_M} Tf
    =
    \{Tf, J_\hbar'(\xi)\}_\hbar'
    =
    T\{f,J_\hbar(\xi)\}_\hbar
    =
    T \Lie_{\xi_M} f.
  \end{equation*} 
  In particular, this yields $[\xi_M, X_\hbar] = 0$ and thus the
  invariance of $X_\hbar$. In addition, recall from
  Proposition~\ref{prop:reducedpoissonvianormalizer} that we have an
  isomorphism of Poisson algebras
  \begin{equation*}
    \left(\Cinfty(M_\red)[[\hbar]],\pi_\red\right) 
    \cong
    \frac{\mathcal{B}_\hbar}{\mathcal{J}_\hbar}.
  \end{equation*}
  By \cite[Prop.~6.2.7]{waldmann:2007a} we know that $T$ is an
  automorphism with respect to the pointwise product, thus we see
  directly from the definition of the deformed Koszul differential
  that
  \begin{equation*}
    T \circ \del_\hbar
    =
    \del_\hbar' \circ T
    \quad \quad 
    \Longrightarrow
    \quad \quad
    T \colon 
    \mathcal{J}_\hbar
    \stackrel{\cong}{\longrightarrow}
    \mathcal{J}_\hbar'.
  \end{equation*}
  Analogously, we have for $j'\in \mathcal{J}_\hbar'$ with $j =
  T^{-1}j' \in \mathcal{J}_\hbar$ and $f\in \mathcal{B}_\hbar$
  \begin{equation*}
    \{Tf, j'\}_\hbar'
    =
    T \{f,j\}_\hbar 
    \in 
    T \mathcal{J}_\hbar
    =
    \mathcal{J_\hbar}'
    \quad \quad
    \Longrightarrow
    \quad \quad
    T \colon
    \mathcal{B}_\hbar
    \stackrel{\cong}{\longrightarrow}
    \mathcal{B}_\hbar'.
  \end{equation*}
  Thus $T_\red$ establishes an isomorphism of the spaces
  $\mathcal{B}_\hbar/\mathcal{J}_\hbar$ and
  $\mathcal{B}_\hbar'/\mathcal{J}_\hbar'$.  It remains to check the
  compatibility with the Poisson bracket:
  \begin{align*}
    \pi^*T_\red\{u,v\}_\red
    & =
    \boldsymbol{\iota^*}' T \prol \boldsymbol{\iota^*} 
    \{\prol\pi^* u,\prol\pi^*v\}_\hbar  \\
    & =
    \boldsymbol{\iota^*}' T  
    \{\prol\pi^* u,\prol\pi^*v\}_\hbar 
  \end{align*}
  since $T$ maps the kernel of $\boldsymbol{\iota^*}$ into the kernel
  of $\boldsymbol{\iota^*}'$. On the other hand, we get
  \begin{align*}
    \pi^*\{T_\red u, T_\red v\}_\red'
    & = 
    \boldsymbol{\iota^*}' \{ \prol \boldsymbol{\iota^*}' T \prol \pi^* u,
    \prol \boldsymbol{\iota^*}' T \prol \pi^* v\}_\hbar ' \\
    & =
    \boldsymbol{\iota^*}' \{  T \prol \pi^* u, T \prol \pi^* v\}_\hbar ' 
  \end{align*}
  since we take on the right hand side the bracket in
  $\mathcal{B}_\hbar' / \mathcal{J}_\hbar'$ where $[\prol
    \boldsymbol{\iota^*}' f] = [f]$.  Thus the compatibility with the
  brackets is shown.  It remains to show that $T_\red$ is of the form
  $T_\red = \exp(X_{\red,\hbar})$ for some vector field
  $X_{\red,\hbar}\in\hbar\Secinfty(TM_\red)[[\hbar]]$.  Since $T =
  \exp(X_\hbar)$ we know that $T_\red$ is a formal power series of
  $\mathbb{C}[[\hbar]]$-linear operators starting with $\id +
  \hbar(\dots)$. We can write $T_\red = \exp(\hbar D)$ via
  \begin{equation*}
    \hbar D 
    =
    \sum_{s=0}^\infty \frac{(-1)^{s+1}}{s} \left(T - \id\right)^s.
  \end{equation*}
  Again by \cite[Prop.~6.2.7]{waldmann:2007a} it suffices to show
  $T_\red(uv) = T_\red(u)T_\red(v)$, which directly implies $T_\red =
  \exp(X_\red)$ for some vector field $X_\red \in
  \hbar\Secinfty(TM_\red)[[\hbar]]$.  But this is clear since each of
  the involved maps in the definition of $T_\red$ is compatible with
  the pointwise product: The maps $\prol, \pi^*$ and $(\pi^*)^{-1}$
  since they resp. their inverses are pull-backs, the map $T$ since
  $T= \exp(X_\hbar)$ and $\boldsymbol{\iota^*}$ by
  Proposition~\ref{prop:reducedpoissonvianormalizer}.
\end{proof}

%
% Bibliography
%
\bibliographystyle{nchairx}

\begin{thebibliography}{10}

\bibitem {bayen.et.al:1978a}
\chairxauthorbibfont{Bayen, F., Flato, M., Fr{{\o}}nsdal, C., Lichnerowicz, A.,
  Sternheimer, D.:}\newblock \chairxtitlebibfont{Deformation Theory and
  Quantization}.
\newblock Ann. Phys.  \textbf{111} (1978), 61--151.
\par\csname bayen.et.al:1978achairxnote\endcsname

\bibitem {bordemann.herbig.waldmann:2000a}
\chairxauthorbibfont{Bordemann, M., Herbig, H.-C., Waldmann, S.:}\newblock
  \chairxtitlebibfont{BRST Cohomology and Phase Space Reduction in Deformation
  Quantization}.
\newblock Commun. Math. Phys.  \textbf{210} (2000), 107--144.
\par\csname bordemann.herbig.waldmann:2000achairxnote\endcsname

\bibitem {calaque:2005a}
\chairxauthorbibfont{Calaque, D.:}\newblock
  \chairxtitlebibfont{Formality for {L}ie algebroids}.
\newblock Comm. Math. Phys. \textbf{257} (2005).
\par\csname calaque:2005a\endcsname

\bibitem {calaque:2007a}
\chairxauthorbibfont{Calaque, D., Dolgushev, V. A.v Halbout, G.:}\newblock
  \chairxtitlebibfont{Formality theorems for {Hochschild} cochains in the {L}ie algebroid setting}.
\newblock Crelle's J. reine angew. Math. \textbf{612} (2007).
\par\csname calaque:2007a\endcsname


\bibitem {canonaco:1999a}
\chairxauthorbibfont{Canonaco, A.:}\newblock
  \chairxtitlebibfont{$L_\infty$-algebras and quasi-isomorphisms}.
\newblock In: \chairxtitlebibfont{Seminari di Geometria Algebrica 1998-1999},
  67--86. Scuola Normale Superiore, Pisa, 1999.
\par\csname canonaco:1999achairxnote\endcsname

\bibitem {crainic:2004a:pre}
\chairxauthorbibfont{Crainic, M.:}\newblock \chairxtitlebibfont{On the
  perturbation lemma, and deformations}.
\newblock Preprint  \textbf{arXiv:math/0403266} (2004).
\par\csname crainic:2004a:prechairxnote\endcsname

\bibitem {dippell:2019a}
\chairxauthorbibfont{Dippell, M., Esposito, C., Waldmann, S..:}\newblock \chairxtitlebibfont{Coisotropic {T}riples, 
{R}eduction and {C}lassical {L}imit}.
\newblock Documenta Math.  \textbf{24} (2019),  1811--1853.
\par\csname dippell:2019a\endcsname

\bibitem {dolgushev:2006a}
\chairxauthorbibfont{Dolgushev, V.:}\newblock \chairxtitlebibfont{A formality
  theorem for {H}ochschild chains}.
\newblock Adv. Math.  \textbf{200} (2006), 51--101.
\par\csname dolgushev:2006achairxnote\endcsname

\bibitem {dolgushev:2005a}
\chairxauthorbibfont{Dolgushev, V.~A.:}\newblock \chairxtitlebibfont{Covariant
  and equivariant formality theorems}.
\newblock Adv. Math.  \textbf{191} (2005), 147--177.
\par\csname dolgushev:2005achairxnote\endcsname

\bibitem {dolgushev:2005b}
\chairxauthorbibfont{Dolgushev, V.~A.:}\newblock \chairxtitlebibfont{A Proof of
  {T}sygan's Formality Conjecture for an Arbitrary Smooth Manifold}.
\newblock PhD thesis, Massachusetts Institute of Technology, Cambridge, MA,
  2005.
\newblock math.QA/0504420.
\par\csname dolgushev:2005bchairxnote\endcsname

\bibitem {esposito:2019a}
\chairxauthorbibfont{Esposito, C., Kraft, A., Waldmann, S..:}\newblock \chairxtitlebibfont{{BRST} {R}eduction of 
{Q}uantum {A}lgebras with $*$-Involutions}.
\newblock Comm. Math. Phys.  (to appear).
\par\csname esposito:2019a\endcsname

\bibitem {esposito.dekleijn:2018a:pre}
\chairxauthorbibfont{Esposito, C., de~Kleijn, N.:}\newblock
  \chairxtitlebibfont{$L_\infty$-resolutions and twisting in the curved
  context}.
\newblock Preprint  \textbf{arXiv:1801.08472} (2018), 16 pages.
\par\csname esposito.dekleijn:2018a:prechairxnote\endcsname

\bibitem {guillemin.sternberg:1999a}
\chairxauthorbibfont{Guillemin, V.~W., Sternberg, S.:}\newblock
  \chairxtitlebibfont{Supersymmetry and Equivariant de Rham Theory}.
\newblock Springer-Verlag, Berlin, Heidelberg, New York, 1999.
\par\csname guillemin.sternberg:1999achairxnote\endcsname

\bibitem {gutt.waldmann:2010a}
\chairxauthorbibfont{Gutt, S., Waldmann, S.:}\newblock
  \chairxtitlebibfont{Involutions and Representations for Reduced Quantum
  Algebras}.
\newblock Adv. Math.  \textbf{224} (2010), 2583--2644.
\par\csname gutt.waldmann:2010achairxnote\endcsname

\bibitem {kontsevich:2003a}
\chairxauthorbibfont{Kontsevich, M.:}\newblock \chairxtitlebibfont{Deformation
  Quantization of {P}oisson manifolds}.
\newblock Lett. Math. Phys.  \textbf{66} (2003), 157--216.
\par\csname kontsevich:2003achairxnote\endcsname


\bibitem {liao:2019a}
\chairxauthorbibfont{Liao, H-Y., Stiénon, M., Xu, P.:}\newblock \chairxtitlebibfont{Formality and 
{K}ontsevich--{D}uflo type theorems for {L}ie pairs}.
\newblock Adv. Math.   \textbf{352} (2019), 406--482.
\par\csname liao:2019a\endcsname

\bibitem {loday.vallette:2012a}
\chairxauthorbibfont{Loday, J.-L., Vallette, B.:}\newblock
  \chairxtitlebibfont{Algebraic Operads}, vol. 346 in
  \chairxseriesbibfont{Grundlehren der mathematischen Wissenschaften}.
\newblock Springer-Verlag, Berlin, Heidelberg, New York, 2012.
\par\csname loday.vallette:2012achairxnote\endcsname

\bibitem {marsden.weinstein:1974a}
\chairxauthorbibfont{Marsden, J., Weinstein, A.:}\newblock
  \chairxtitlebibfont{Reduction of symplectic manifolds with symmetry}.
\newblock Reports on Math. Phys.  \textbf{5} (1974), 121--130.
\par\csname marsden.weinstein:1974achairxnote\endcsname

\bibitem {Miaskiwskyi:2019a:pre}
\chairxauthorbibfont{Miaskiwskyi, L.:}\newblock \chairxtitlebibfont{Invariant
  Hochschild cohomology of smooth functions}.
\newblock Preprint  \textbf{arXiv:1808.08096} (2018), 16 pages.
\par\csname Miaskiwskyi:2019a:prechairxnote\endcsname

\bibitem {moerdijk.reyes:1991a}
\chairxauthorbibfont{Moerdijk, I., Reyes, G.E.:}\newblock
  \chairxtitlebibfont{Models for Smooth Infinitesimal Analysis}.
  \chairxseriesbibfont{}
\newblock Springer-Verlag New York, 1991.
\par\csname loday.vallette:2012achairxnote\endcsname


\bibitem {neumaier.waldmann:2009a}
\chairxauthorbibfont{Neumaier, N., Waldmann, S.:}\newblock
  \chairxtitlebibfont{Deformation Quantization of Poisson Structures Associated
  to Lie Algebroids}.
\newblock SIGMA  \textbf{5} (2009), 074.
\par\csname neumaier.waldmann:2009achairxnote\endcsname

\bibitem {reichert:2017a}
\chairxauthorbibfont{Reichert, T.:}\newblock \chairxtitlebibfont{Characterstic
  classes of star products on Marsden-Weinstein reduced symplectic manifolds}.
\newblock Lett. Math. Phys.  \textbf{107} (2017), 643--658.
\par\csname reichert:2017achairxnote\endcsname

\bibitem {reichert:2017b}
\chairxauthorbibfont{Reichert, T.:}\newblock \chairxtitlebibfont{Classification
  and Reduction of Equivariant Star Products on Symplectic Manifolds}.
\newblock PhD thesis, Institute of Mathematics, University of {W}{\"u}rzburg,
  W{\"u}rzburg, Germany, 2017.
\par\csname reichert:2017bchairxnote\endcsname

\bibitem {reichert.waldmann:2016a}
\chairxauthorbibfont{Reichert, T., Waldmann, S.:}\newblock
  \chairxtitlebibfont{Classification of Equivariant Star Products on Symplectic
  Manifolds}.
\newblock Lett. Math. Phys.  \textbf{106} (2016), 675--692.
\par\csname reichert.waldmann:2016achairxnote\endcsname

\bibitem {waldmann:2007a}
\chairxauthorbibfont{Waldmann, S.:}\newblock
  \chairxtitlebibfont{Poisson-{G}eometrie und {D}eformationsquantisierung.
  {E}ine {E}inf{\"u}hrung}.
\newblock Springer-Verlag, Heidelberg, Berlin, New York, 2007.
\par\csname waldmann:2007achairxnote\endcsname

\bibitem {xu:1998a}
\chairxauthorbibfont{Xu, P.:}\newblock \chairxtitlebibfont{Fedosov $*$-Products
  and Quantum Momentum Maps}.
\newblock Commun. Math. Phys.  \textbf{197} (1998), 167--197.
\par\csname xu:1998achairxnote\endcsname

\end{thebibliography}

\end{document}